\documentclass[12pt]{article}
\usepackage{amssymb,amsmath,amsthm, amsfonts}

\usepackage{graphicx}
\usepackage{epsfig}
\usepackage{tikz}
\theoremstyle{plain}
\newtheorem{theorem}{Theorem}[section]
\newtheorem{lemma}{Lemma}[section]

\theoremstyle{definition}
\newtheorem{definition}{Definition}[section]

\numberwithin{equation}{section}

\allowdisplaybreaks \numberwithin{equation} {section}

\begin{document}
\title{Spatial inhomogeneity for a three-dimensional doubly degenerate nutrient system with indirect consumption  }
\author{Ai Huang$^{1}$, Xiangmao De-ji$^{1}$, Jing Li$^{2}$, \ Yifu Wang$^{1}$ \ \\[2mm]
 { \small $^{1}$School of Mathematics and Statistics, Beijing Institute of Technology,}\\
  { \small Beijing 100081, People's Republic of China}\\
 { \small $^{2}$College of Science, Minzu University of China,}\\
  { \small Beijing 100081, People's Republic of China}}
         \date{}
         \maketitle
\noindent{\bf Abstract}\ \ This paper investigates the global dynamics of a doubly degenerate nutrient-taxis system with indirect consumption:
\begin{equation*}
\left\{
\begin{aligned}
&u_{t}=\nabla \cdot (uv\nabla u)-\nabla \cdot (u^{2}v\nabla v)+\ell vw,&x\in \Omega,\, t>0,\\
& v_{t}=\Delta v-vw,&x\in \Omega,\, t>0,\\
&w_t=\Delta w-w+u,&x\in\Omega,t>0
\end{aligned}
\right.
\end{equation*}
posed on a smooth bounded domain $\Omega\subset\mathbb{R}^{3}$  with no-flux boundary conditions.
 It is shown that
   for suitably regular initial data $(u_0,v_0,w_0)$, the associated initial-boundary value problem admits a global weak solution.
   Furthermore, in an appropriate topological setting, this solution converges to an equilibrium $(u_\infty, 0,w_\infty)$ as $t\rightarrow \infty$. Notably, when $u_0$ is nonconstant and the mass of $
   v_0$ is sufficiently small, the limiting  profiles $u_{\infty}$ and $w_{\infty}$ are  are spatially nonhomogeneous,
      capturing emergent patterning in nutrient-depleted environments.
    A cornerstone of our analysis is the introduction of novel functional inequalities,
       which  provide estimates from below for the integral $\int_{\Omega}u^{k}v|\nabla u|^2$ with some $k>-1$.

  \noindent{\bf Key words: }\ Chemotaxis; \ doubly degenerate diffusion;  boundedness; \ indirect consumption;\ pattern formation

\noindent {\bf\em MSC}:~ 35K65, 35B36, 35Q92, 35K59, 92C17
\vskip   0.2cm \footnotetext[1]{E-mail addresses: wangyifu@bit.edu.cn(Y. Wang)} \setlength{\baselineskip}{20pt}

   \section{Introduction}
\setcounter{section}{1}\setcounter{equation}{0} \
Spatial patterning is a ubiquitous phenomenon in biological systems. Bacterial colonies exemplify this through their remarkable morphological adaptability to environmental constraints. Notably, studies of \it{Bacillus subtilis} \rm populations under nutrient-limited conditions reveal that such microbial systems often develop intricate spatial configurations, including snowflake like fractal aggregates (\cite{Fujikawa1992,Fujikawa1989,Fujikawa1990}). These patterns are dynamically regulated by key environmental factors such as substrate stiffness, nutrient gradients, and temperature. The systematic emergence of such complex architectures suggests the existence of universal organizational principles governing bacterial collective dynamics in resource-scarce environments.

Mathematical modeling and rigorous analysis play a central role in elucidating the underlying mechanisms driving colony pattern formation. In this context, Keller-Segel-type models are particularly  capable of capturing colony pattern formation as a self-organizing dynamical process (\cite{Herrero(1997),Hillen(2009)}). To  better understand the complex spatio-temporal dynamics
of the bacterium \it{Bacillus subtilis}  \rm  observed in experiments (\cite{Ohgiwari,Fujikawa1992,Fujikawa1989}), a
recent modeling approach   proposes modified Keller-Segel systems of the form
\begin{equation}\label{1.1a}
\left\{\begin{array}{ll}
		u_{t}=\nabla\cdot(uv\nabla u)-\chi \nabla\cdot(u^{\alpha}v\nabla v)+\ell uv,\\
		v_{t}=\Delta v-uv,\\
	\end{array}
		\right.
\end{equation}
with $\alpha=2, \chi\geq 0$ and $\ell\geq 0$ for the population density $u=u(x,t)$ and the food resource distribution $v=v(x,t)$ in \cite{Leyva2013,Plaza2019,Kawasaki1997}.
Here chemotactic motion of the bacterium \it{Bacillus subtilis} \rm can significantly enhance bacterial colony growth rates  observed in biological experiments and demonstrated through extensive numerical simulations \cite{Leyva2013,Plaza2019}.
 The key novelty of  system   \eqref{1.1a} consists in the appearance of nutrient density $v=v(x,t)$ as the factor not only in the cross-diffusion term, but especially also in the part related to random diffusion
of population density $u=u(x,t)$.
The reduction of bacterial motility near site of small nutrient concentrations herein seems in good accordance with experimentally
gained knowledge on bacterial migration in nutrient-poor environments.

From the perspective of mathematical analysis, the signal-dependent degeneracy in the first  equation  of
  \eqref{1.1a}
 evidently brings about significant challenges beyond those encountered in  well-understood Keller-Segel
 models which  coupled the chemotactic motion  to the density-dependent diffusion of standard porous medium-type
(\cite{Lankeit,Blanchet,WinklerMAS2025}).  It is also worth mentioning that, as the close relatives  of  \eqref{1.1a},  the taxis-type migration-consumption system of the form
\begin{equation}\label{1.2a}
\left\{\begin{array}{ll}
		u_{t}=\Delta(uv^\alpha),\\
		v_{t}=\Delta v-uv,\\
	\end{array}
		\right.
\end{equation}
with $\alpha\geq 1$ has been studied recently.  This system is relevant in the modeling of microbial migration processes
involving so-called local sensing of concentration levels of a directing chemical $v$ (\cite{Fu,Liu}).
Unlike \eqref{1.1a}, the feature of \eqref{1.2a} stems from  the precisely   quantifiable interplay  between random diffusive and cross-diffusive contributions,  encapsulated by a single Laplace operator. 
This facilitates a duality-based analytical strategy, enabling rigorous characterization of the nontrivial dynamics of initial-boundary value problems for \eqref{1.2a}, as partially evidenced 
in the literature \cite{Winkler(2024),Li Winkler(2022R),Winkler(2023Non),WinklerMAS2025,Laurenot}.


The double-degeneracy of the first equation  in  \eqref{1.1a} therein  apparently reduces a priori information on regularity to a significant extent, accordingly  already issues related to  
basic existence theories thereof seem far from evident, and
in particular
 it seems widely unclear yet how far the attractive taxis mechanism
in \eqref{1.1a}  may drive uncontrolled destabilization---potentially culminating in singularity formation.
It is  shown that in the taxis-free framework 
($\chi=0$), the associated initial-boundary value problem for \eqref{1.1a} possesses a global weak solution in the smooth convex domain $\Omega\subset \mathbb{R}^n $. Moreover, within an appropriate topological setting,
  the solution will approach the non-homogeneous  steady-state $(u_\infty,0)$ in the large time limit
(see \cite{WCVPDE}).
  It is noticed that due to the  absence of taxis effects in \eqref{1.1a}, the comparison principle
  allows for deriving local bounds for  $\|u(\cdot,t)\|_{L^\infty(\Omega)}$, and thereby
   the boundedness of $\int_0^\infty\int_\Omega u^{q-1}v|\nabla u|^2$ with $ q\in(0,1) $ becomes the basic regularity property of \eqref{1.1a}.  These estimates provide the essential foundation for establishing the global existence of  weak solution.

 To the best of our knowledge,  the available analytical results for \eqref{1.1a}
  remain restricted to low-dimensional settings so far:
   In one-dimensional setting, the dynamics of system \eqref{1.1a} with $\alpha=2$ have been  rather comprehensively characterized. 
 Global weak solutions were first constructed for arbitrarily large initial data under the integrability condition $\int_\Omega \ln u_0>-\infty$ \cite{Winkler3}, exhibiting asymptotic convergence to porous medium-type profiles. This constraint  were removed in  \cite{Li Winkler} by the analysis of the energy functional $\int_\Omega u\ln u+\int_\Omega \frac{v_x^2}{v}$. For the two-dimensional case, \cite{LiJDE} established the existence of global weak solutions in bounded convex domains with $1<\alpha<\frac3 2$. This result was later extended in  \cite{WJDE}  to all  $\alpha<2$, where  $L^{\infty}$-bounds for the solutions was also proved. Furthermore, the global boundedness of \eqref{1.1a} with $\alpha=2$ was established under a smallness condition on $v_0$  in \cite{WNARWA},  and this smallness condition was relaxed in \cite{ZhangLi}.

   In striking contrast to lower-dimensional cases, where effective embedding theorems facilitate the analysis,
    the findings on the three-dimensional version of \eqref{1.1a}  even at the level of global solvability theory seems  limited to 
   specific ranges of the exponent
    $\alpha$:   
The initial breakthrough was achieved in \cite{LiJDE}, which established a solvable regime of
 $\frac{7}{6} < \alpha < \frac{13}{9}$ for bounded convex domains.  Subsequent work \cite{Wu}
   suggested a potential extension to
   $1 < \alpha < \frac{3}{2}$.
Through functional inequality techniques, the solvable regime was further extended to $\frac{3}{2} < \alpha < \frac{19}{12}$ in \cite{XiangMao}, which also established non-trivial dynamical behaviors of solutions.
 In addition, the regularizing effect of
  logistic-type terms $\rho u-\mu u^k$ on
 system \eqref{1.1a} was investigated:
 Global existence of weak solutions was established for arbitrary dimensions $n\geq2$ when $k>\frac{n+2}{2}$ (\cite{XP}), and in particular  the continuity of weak solutions was achieved  for the case $n=k=2$ (\cite{LiWinkler2}).

In the classical Keller-Segel system, the chemical substance is  
 directly secreted by cells themselves. However, in some realistic situations
  the
chemotactic signal may be neither produced nor consumed by the cells themselves directly. In this regard, a well-studied example occurs in predator-prey ecosystems, where predators track chemical signals released by their prey (\cite{TelloWrzosek(2017),Strohm(2013),Surulescu(2021)}). 
   The typical form of the chemotaxis model involving indirect signal consumption is
\begin{equation}\label{3}
\begin{cases}
n_t=\Delta n-\nabla\cdot(n\nabla v),\\
v_t=\Delta v-vw,\\
w_t=\delta\Delta u-w+n.\\
\end{cases}
\end{equation}
At first glance, (\ref{3}) appears advantageous for establishing global boundedness of solutions, as its  second equation readily provides an $L^\infty$-bound for $v$ through standard comparison arguments. Nevertheless,
this bound alone is insufficient for dominating
 the destabilization of chemotactic  diffusion in the $n$-equation. Indeed, the existence of globally classical solutions is guaranteed only under small initial data $n_0$ even in three-dimensional settings (\cite{Fuest(2019)}). While substantial efforts have recently been devoted to analyzing  the impact of indirect signal mechanisms on the dynamics of the corresponding systems, most results are limited to either signal production scenarios (\cite{TaoWinkler(2017),Daicvpde(2023),Herrero(1997), Winkler(2013), Fujie(2017)}) or systems augmented with  the logistic source (\cite{Xing(2021),Liu Li Huang(2020)}).

 Motivated by the findings discussed above,  a natural question arises: to what extent does the indirect mechanism genuinely enhance the
 regularity of the three-dimensional  version of \eqref{1.1a}?
To this end, we consider the
following  doubly degenerate nutrient system with indirect consumption
\begin{equation}\label{1.1}
	\left\{\begin{array}{ll}
		u_{t}=\nabla\cdot(uv\nabla u)-\nabla\cdot(u^2v\nabla v)+\ell vw,& \quad x\in\Omega, t>0,\\
		v_{t}=\Delta v-vw,&\quad x\in\Omega, t>0,\\
		w_{t}=\Delta w-w+u,&\quad x\in\Omega, t>0,\\
	(uv\nabla u-u^2v\nabla v)\cdot\nu=\nabla v\cdot\nu=\nabla w\cdot\nu=0,
&\quad x\in\partial\Omega, t>0,\\
		u(\cdot,0)=u_{0},v(\cdot,0)=v_{0},w(\cdot,0)=w_{0},&\quad x\in\Omega,
	\end{array}
		\right.
\end{equation}
posed in a smoothly bounded domain $\Omega\subset\mathbb{R}^3$, and the parameter satisfies $\ell\geq0$. The initial data $(u_0,v_0,w_0)$ is assumed to satisfy
\begin{equation}\label{1.2}
    \left\{
    \begin{array}{l}
        u_0\in L^{\infty}(\Omega)~\mbox{is such that}~u_0\geq0~\mbox{and}~u_0\not\equiv0 ~\mbox{in}~\overline\Omega,\\
        v_0\in W^{1,\infty}(\Omega)~\mbox{is such that}~v_0>0~\mbox{in}~\overline\Omega,\\
        w_0\in W^{1,\infty}(\Omega)~\mbox{is such that}~w_0\geq0~\mbox{and}~w_0\not\equiv0~\mbox{in}~\overline\Omega
    \end{array}
    \right.
\end{equation}
as well as
\begin{equation}\label{1.6}
    \|u_0\|_{L^\infty(\Omega)}+\|v_0\|_{W^{1,\infty}(\Omega)}+\| w_0\|_{L^\infty(\Omega)}\leq K
\end{equation}
for some $K>0$.

In this context, the first result asserts  the global existence and  boundedness of continuous weak solutions to  \eqref{1.1}, without requiring convexity of domain or imposing any smallness conditions on the initial data.

\begin{theorem}\label{theorem 1.1}
	Let $\Omega\subset\mathbb{R}^{3}$ be a smoothly bounded domain and $\ell\geq0$. Then there exists  $C(K)>0$ with the property that whenever $u_0, v_0$ and $w_0$ fulfill \eqref{1.2} and \eqref{1.6}, the problem \eqref{1.1} admits a
 continuous global weak  solution $(u,v,w)$  in the sense of Definition \ref{Definition 2.1} below, which is such that
    \begin{equation}\label{1.7}
\left\{
\begin{aligned}
&u\in C^{0}_{loc}(\overline{\Omega}\times [0,\infty))\cap L^{\infty}_{loc}(\overline{\Omega}\times [0,\infty))\\
&v\in C^{0}_{loc}(\overline{\Omega}\times [0,\infty))\cap C^{2,1}_{loc}(\overline{\Omega}\times (0,\infty))\cap L^{\infty}_{loc}([0,\infty);W^{1,\infty}(\Omega))\\
&w\in C^{0}_{loc}(\overline{\Omega}\times [0,\infty))\cap C^{2,1}_{loc}(\overline{\Omega}\times (0,\infty))\\
\end{aligned}
\right.
\end{equation}
and that  $u\geq0, v>0$ and $w>0$ on $\overline{\Omega}\times(0,\infty)$, and that
\begin{equation}\label{1.8}
    \|u(\cdot,t)\|_{L^\infty(\Omega)}+\|v(\cdot,t)\|_{W^{1,\infty}(\Omega)}+\|w(\cdot,t)\|_{W^{1,\infty}(\Omega)}\leq C(K)~~\mbox{for all}~~t>0.
\end{equation}
\end{theorem}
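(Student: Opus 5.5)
I would follow the regularization-and-compactness route standard for doubly degenerate nutrient systems. First I replace the first equation by the non-degenerate problem
\[
u_{\varepsilon t}=\nabla\cdot\big((u_\varepsilon+\varepsilon)v_\varepsilon\nabla u_\varepsilon\big)-\nabla\cdot\big(u_\varepsilon^2v_\varepsilon\nabla v_\varepsilon\big)+\ell v_\varepsilon w_\varepsilon ,
\]
with smoothed positive initial data $u_{0\varepsilon}\ge\varepsilon$ approximating $u_0$. For the approximate problem I establish local classical solvability together with an extensibility criterion in terms of $\|u_\varepsilon\|_{L^\infty}$. The elementary bounds then come from the comparison principle and from integrating the equations:
\[
0<v_\varepsilon\le\|v_0\|_{L^\infty}, \qquad \int_0^\infty\!\!\int_\Omega v_\varepsilon w_\varepsilon\le\int_\Omega v_0,
\]
and, after integrating the $u_\varepsilon$-equation, $\int_\Omega u_\varepsilon(\cdot,t)\le\int_\Omega u_0+\ell\int_\Omega v_0$. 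From this uniform $L^1$ bound, the $w$-equation with $L^p$--$L^q$ semigroup estimates gives $w_\varepsilon$ bounded in $L^p$ for $p<3$.

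The core is an $L^p$ estimate for $u_\varepsilon$ that is uniform in time. I test the first equation by $u_\varepsilon^{p-1}$. The diffusion term yields $\int u^{p-1}v|\nabla u|^2$. By Young's inequality, the taxis term is controlled by half of this plus $C\int u^{p+2}v|\nabla v|^2$. To handle the latter, I use the quantity $\int_\Omega |\nabla v|^4/v^3$ (or $|\nabla v|^2/v$) and its time derivative. This derivative produces a dissipative term $\int v^{-3}|\nabla v|^2|D^2\ln v|^2$-type expression, plus a coupling term involving $w$, and it requires the boundary integral estimate for nonconvex domains. The coupling is only through $w$, and by maximal Sobolev regularity $w$ gains $W^{2,q}$-type control from $u$. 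This is where the indirect mechanism pays off.

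The key functional inequality (the novel one announced in the abstract) bounds $\int u^{p+2}v|\nabla v|^2$, and also $\|u\|_{L^{p+1}}$, from below by the weighted gradient $\int u^{p-1}v|\nabla u|^2$. Its proof writes $\int u^{p+1}\cdots$ via integration by parts against $\nabla v$ and uses the Gagliardo--Nirenberg inequality applied to $u^{(p+1)/2}$, with the weight $v$ split off through $\nabla v/v$. Combining everything, I aim for an ODI of the form
\[
y'+y\le C, \qquad y=\int_\Omega u_\varepsilon^p+\int_\Omega\frac{|\nabla v_\varepsilon|^4}{v_\varepsilon^3}+\|w_\varepsilon\|_{W^{1,q}}^{q},
\]
which gives time-uniform bounds. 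Moser iteration (or an Alikakos-type iteration exploiting the weight) then upgrades to $\|u_\varepsilon\|_{L^\infty}\le C(K)$. This is delicate because the diffusion degenerates where $v$ is small. I expect to need a positive lower bound for $v_\varepsilon$ on finite time intervals, which follows from $v_t\ge\Delta v-\|w\|_{L^\infty}v$; but it decays in time, so the uniform-in-time statement \eqref{1.8} must be organized so that the constants do not depend on $\inf v$. Given $L^\infty$ bounds for $u$, parabolic regularity yields the $W^{1,\infty}$ bounds for $v$ and $w$ in \eqref{1.8}. I expect this $L^p$/$L^\infty$ step, in particular absorbing $\int u^{p+2}v|\nabla v|^2$ in three dimensions, to be the main obstacle.

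Finally, for the passage to the limit I derive time-derivative bounds on $u_\varepsilon^{m}$ in dual Sobolev spaces and spatial bounds on $\int\!\int u^{p-1}v|\nabla u|^2$. An Aubin--Lions argument then gives a.e. convergence of $u_\varepsilon$. Hölder continuity of $u_\varepsilon$, uniform on compact sets, follows from Porzio--Vespri-type results for degenerate equations, using the local positive lower bound of $v$; this yields $u\in C^0_{loc}$. Parabolic Schauder theory provides $C^{2,1}$ convergence of $v_\varepsilon$ and $w_\varepsilon$ on $\overline\Omega\times(0,\infty)$. Passing to the limit in the weak formulation of Definition \ref{Definition 2.1} completes the proof, with positivity of $v$ and $w$ coming from the strong maximum principle.
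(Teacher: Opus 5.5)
Your local existence and compactness steps are fine, but the time-uniform part, which is the actual content of \eqref{1.8}, has a genuine gap.

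\textbf{The ODI cannot be derived.} The inequality $y'+y\le C$ with $y$ containing $\int_\Omega u_\varepsilon^p$ is out of reach. The $u_\varepsilon$-equation has no damping, and its only dissipation $\int_\Omega u_\varepsilon^{p-1}v_\varepsilon|\nabla u_\varepsilon|^2$ carries the weight $v_\varepsilon$, which tends to zero exponentially. Hence neither $\int_\Omega u_\varepsilon^p$ nor the unweighted $\|u_\varepsilon\|_{L^{p+1}(\Omega)}$ (as your functional inequality claims) can be dominated by this dissipation uniformly in time. This is consistent with $u$ converging to a nonconstant $u_\infty$. What works instead is integrating in time inequalities whose right-hand sides are time-integrable, e.g.\ $y'\le f(t)y+h(t)$ with $f,h\in L^1((0,\infty))$ as in Lemma \ref{lemma3.7}. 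This needs an ingredient absent from your plan. Since $\int_\Omega u_\varepsilon\ge\int_\Omega u_0$, positivity of the Neumann heat semigroup gives $w_\varepsilon\ge\kappa>0$ for $t\ge 2t_0$ (Lemma \ref{lemma2.2}), hence $v_\varepsilon\le Ce^{-\kappa t}$ (Lemma \ref{lemma2.3}). This makes all $v$-weighted sources integrable in time. Via Lemma \ref{lemma2.7} it also gives $\int_0^\infty\int_\Omega|\nabla v_\varepsilon|^{q+2}v_\varepsilon^{-q-1}<\infty$, but only if $w_\varepsilon$ is bounded in $L^{\frac{q+2}{2}}(\Omega)$ uniformly in time.

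\textbf{The three-dimensional circularity.} That proviso is a second obstruction your plan does not address. With only $\int_\Omega u_\varepsilon\le m$, $w_\varepsilon$ is bounded merely in $L^s$ for $s<3$. So already the source $\int_\Omega w_\varepsilon^3v_\varepsilon$ of your $\int_\Omega|\nabla v_\varepsilon|^4/v_\varepsilon^3$ functional is out of reach. Moreover, absorbing the taxis term $\int_\Omega u_\varepsilon^{p+1}v_\varepsilon|\nabla v_\varepsilon|^2$ (the exponent is $p+1$, not $p+2$) requires passing through some $\int_\Omega u_\varepsilon^\beta v_\varepsilon$. With only $L^1$ control, this is dominated by the dissipation only for $\beta<p+\frac53$ (Lemma \ref{lemma3.4}). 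That forces a gradient weight $|\nabla v_\varepsilon|^r/v_\varepsilon^{r-1}$ with $r>3p+5$, hence $w_\varepsilon\in L^{r/2}$, far beyond what $L^1$ yields. Matching your $r=6$ would need $p<\frac13$, and appealing to maximal regularity for $w$ is circular.

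The paper breaks this circle by a self-map argument on $M:=\sup_t\int_\Omega u_\varepsilon^{3/2}$:
\begin{itemize}
\item Testing with $u_\varepsilon^{-3/4}$ gives $\int_0^\infty\int_\Omega u_\varepsilon^{-3/4}v_\varepsilon|\nabla u_\varepsilon|^2\le C(1+M^{5/6})$ (Lemma \ref{lemma3.3}).
\item Lemma \ref{lemma3.4} with $k_0=-\frac{4}{33}$, $\beta_0>\frac52$ controls $\int_\Omega u_\varepsilon^{\beta_0}v_\varepsilon$. It does so via $\int_\Omega u_\varepsilon^{k_0}v_\varepsilon|\nabla u_\varepsilon|^2$ (interpolated between the exponents $-\frac34$ and $\frac12$) and a gradient weight whose evolution needs $w_\varepsilon$ only in a norm bounded by $C(1+M^{\iota_0})$ with $\iota_0<1$.
\item Sublinearity in $M$ closes the loop (Lemma \ref{lemma3.5}), and only then does Lemma \ref{lemma3.6} give all $L^p$ bounds.
\end{itemize}

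\textbf{The $L^\infty$ step.} Likewise, the $L^\infty$ bound cannot come from a standard Moser iteration, which needs $\inf v_\varepsilon>0$ uniformly in time. The paper instead uses Lemma \ref{lemma3.8}, where $v_\varepsilon$ enters only through the time-integrable quantities $\int_0^\infty\int_\Omega|\nabla v_\varepsilon|^8/v_\varepsilon^7$ and $\int_0^\infty\int_\Omega u_\varepsilon v_\varepsilon$. These feed the recursion of Lemma \ref{lemma3.9}.
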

Furthermore, based on the duality-based argument, the a priori estimates obtained in the proof of Theorem \ref{theorem 1.1} allow for the derivation of the following stability property in \eqref{1.1}.
\begin{theorem}\label{theorem1.2}
Let $\Omega \subset \mathbb R^3$ be a bounded domain with smooth boundary. Then for each $\eta>0$, there exists $\delta_1=\delta_1(\eta, K)>0$ whenever $u_0$, $v_0$ and $w_0$ fulfill
\eqref{1.2} and \eqref{1.6}, as well as
$$\int_{\Omega}v_0\leq \delta_1,$$
the solution $(u,v,w)$ of \eqref{1.1} obtained in Theorem \ref{theorem 1.1} satisfies
\begin{align}\label{1.9}
    \|u(\cdot,t)-u_0\|_{(W^{1,\infty}(\Omega))^*}\leq \eta~~\mbox{for all}~~t>0.
\end{align}
\end{theorem}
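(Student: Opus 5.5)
We test the weak formulation of the $u$-equation against $\varphi\in W^{1,\infty}(\Omega)$ with $\|\varphi\|_{W^{1,\infty}(\Omega)}\le 1$. Using Definition \ref{Definition 2.1}, for a.e. $t>0$ we have
\begin{equation*}
\int_\Omega (u(\cdot,t)-u_0)\varphi=-\int_0^t\int_\Omega uv\nabla u\cdot\nabla\varphi+\int_0^t\int_\Omega u^2v\nabla v\cdot\nabla\varphi+\ell\int_0^t\int_\Omega vw\varphi .
\end{equation*}
Continuity of $u$ then extends this to all $t>0$. It therefore suffices to show that each of the three space-time integrals over $\Omega\times(0,\infty)$ is bounded by a quantity that tends to zero as $\int_\Omega v_0\to0$, uniformly in the data satisfying \eqref{1.6}.

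\emph{Zero-order term.} Integrating the $v$-equation gives $\frac{d}{dt}\int_\Omega v=-\int_\Omega vw$. Hence
\begin{equation*}
\int_0^\infty\int_\Omega vw\le\int_\Omega v_0\le\delta_1 ,
\end{equation*}
so the last term is at most $\ell\delta_1$.

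\emph{Cross-diffusive term.} By \eqref{1.8}, $u\le C(K)$. Cauchy--Schwarz gives
\begin{equation*}
\Big|\int_0^t\!\!\int_\Omega u^2v\nabla v\cdot\nabla\varphi\Big|\le C(K)^2\Big(\int_0^\infty\!\!\int_\Omega v\Big)^{1/2}\Big(\int_0^\infty\!\!\int_\Omega \frac{|\nabla v|^2}{v}\,v^2\Big)^{1/2}.
\end{equation*}
A cleaner route is the standard identity
\begin{equation*}
\frac{1}{2}\frac{d}{dt}\int_\Omega v^2+\int_\Omega|\nabla v|^2=-\int_\Omega v^2w\le0 .
\end{equation*}
Combined with $\|v\|_{L^\infty}\le\|v_0\|_{L^\infty}\le K$, it yields
\begin{equation*}
\int_0^\infty\int_\Omega v|\nabla v|^2\le K\int_0^\infty\int_\Omega|\nabla v|^2\le \frac{K}{2}\int_\Omega v_0^2\le \frac{K^2}{2}\int_\Omega v_0 .
\end{equation*}
We pair this with $\int_0^t\int_\Omega u^4 v\le C(K)^4\int_0^t\int_\Omega v$. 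That bound, however, is not uniform in $t$, so instead we apply Cauchy--Schwarz directly:
\begin{equation*}
\int\!\!\int u^2v|\nabla v|\le C(K)^2\int\!\!\int v|\nabla v| ,
\end{equation*}
and we need to control $\int_0^\infty\int_\Omega v|\nabla v|$. Here we use the decay of $v$ coming from the indirect consumption. Since $\int_\Omega u$ is nondecreasing and $u_0\not\equiv0$, the function $w$ stays bounded below: by the a priori estimates of Theorem \ref{theorem 1.1} and a Neumann heat semigroup comparison, $w\ge c(K,u_0)>0$ for $t\ge1$. Then
\begin{equation*}
\int_\Omega v(\cdot,t)\le e^{-c(t-1)}\int_\Omega v_0 .
\end{equation*}
Together with the gradient bound (interpolating $\int v|\nabla v|\le(\int v)^{1/2}(\int v|\nabla v|^2)^{1/2}$) this gives an integrable-in-time quantity controlled by a positive power of $\int_\Omega v_0$.

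\emph{Diffusive term (main obstacle).} This is where the paper's functional inequality for $\int_\Omega u^kv|\nabla u|^2$ enters. The key is the basic estimate from the proof of Theorem \ref{theorem 1.1}: testing by $u^{k}$, for suitable $k>-1$ (e.g. $k=0$, or $u^{q}$ with $q\in(0,1)$), gives
\begin{equation*}
\int_0^\infty\int_\Omega u^{k}v|\nabla u|^2\le C(K)\Big(1+\ell\Big)\quad\text{or, more precisely,}\quad \le C(K)\Big(\int_\Omega v_0\Big)^{\theta}
\end{equation*}
for some $\theta>0$, since every source term on the right of that energy identity contains the factor $v$ or $vw$ and can be controlled via the bounds above. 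We then write
\begin{equation*}
\int\!\!\int uv|\nabla u|\le\Big(\int\!\!\int u^{2-k}v\Big)^{1/2}\Big(\int\!\!\int u^{k}v|\nabla u|^2\Big)^{1/2}\le C(K)\Big(\int_0^\infty\!\!\int_\Omega v\Big)^{1/2}(\cdots)^{1/2}.
\end{equation*}
Both factors are small in $\int_\Omega v_0$. The delicate point is ensuring that the dissipation bound is genuinely small (not merely bounded) and that the exponential decay rate of $v$ is uniform in the class \eqref{1.6}. The latter requires a lower bound for $w$ depending only on $K$ and $\int_\Omega u_0$; I would handle this either by a duality argument that avoids a lower bound on $w$, or by noting that smallness of one factor times boundedness of the other suffices. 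Choosing $\delta_1$ so that the sum of the three contributions is at most $\eta$ concludes the proof.
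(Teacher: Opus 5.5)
Your argument is essentially the paper's proof of Lemma \ref{lemma4.1}. You bound the $(W^{1,\infty}(\Omega))^*$-increment of $u$ by the three flux integrals and use $\int_0^\infty\int_\Omega vw\le\int_\Omega v_0$. You get $\int_0^\infty\int_\Omega v\le C\int_\Omega v_0$ from the exponential decay of $\int_\Omega v$ forced by the lower bound $w\ge\kappa$ of Lemma \ref{lemma2.2}. You then split the two flux terms by H\"older into a bounded factor times a positive power of $\int_0^\infty\int_\Omega v$. Using the $L^\infty$ bound \eqref{1.8} instead of \eqref{3.20}--\eqref{3.21} is harmless.

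One claim in your diffusive step must be dropped: the dissipation $\int_0^\infty\int_\Omega u^kv|\nabla u|^2$ is \emph{not} bounded by $C(K)(\int_\Omega v_0)^\theta$. Testing with $u^k$ produces source terms carrying $v$ or $vw$, but also the time-boundary term $\frac1{k+1}\big(\int_\Omega u_0^{k+1}-\int_\Omega u^{k+1}(\cdot,t)\big)$. That term is not small uniformly in the class \eqref{1.6}. For instance, with rapidly oscillating $u_0$ and $v_0\equiv\delta$, even a little nutrient levels out the oscillations and changes $\int_\Omega u^{k+1}$ by an amount of order one.

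So ``both factors are small'' is false, and you need the fallback you mention yourself, which is exactly what the paper does. The dissipation is merely bounded; given \eqref{1.8} this follows by testing with $u^k$ for $k\neq 0$, whereas $k=0$ needs $\ln u$ rather than $u^0=1$. The smallness then comes from $(\int_0^\infty\int_\Omega u^{2-k}v)^{1/2}\le C(K)(\int_0^\infty\int_\Omega v)^{1/2}$.

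These testing identities hold only for the positive classical approximations. So, as in Lemmas \ref{lemma4.1}--\ref{lemma4.3}, estimate $\int_0^\infty\|u_{\varepsilon t}\|_{(W^{1,\infty}(\Omega))^*}dt$ for $(u_\varepsilon,v_\varepsilon,w_\varepsilon)$ and pass to the limit via \eqref{2.41}. Do not insert $\int\int u^kv|\nabla u|^2$ into the weak formulation of the limit, where it is not a priori defined.

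Your worry about uniformity is legitimate, but the paper does not resolve it either. The $\kappa$ of Lemma \ref{lemma2.2} is proportional to $\|u_0\|_{L^1(\Omega)}$, so the paper's constant in \eqref{4.1} carries the same hidden dependence on $\int_\Omega u_0$. No additional duality argument is needed to match the paper.
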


It is observed that  system \eqref{1.1} admits an uncountable family of steady states of the form
 $(u_*,0,w_*)$, where $u_*$ is a reasonably regular
 function and $w_*$ solves the elliptic equation  $-\Delta w+w=u_* $
 under no-flux boundary conditions.
  Beyond the stability  property  indicated in Theorem 1.2, as a remarkable feature of \eqref{1.1}, it is confirmed that every solution emanating from  the initial data of arbitrary size asymptotically approaches a steady state  $(u_\infty,0,w_\infty)$, and in particular the limit profiles  $u_\infty$ and $w_\infty$ are necessarily non-homogeneous  whenever $u_0\not\equiv const.$ and the initial concentration $v_0$   is sufficiently small.

\begin{theorem}\label{Th1.3}
Let $\Omega \subset \mathbb R^3$ be a bounded domain with smooth boundary, and $\ell\geq0$.
 Then  there exist  nonnegative functions $u_{\infty}\in C(\Omega)$
 and $w_{\infty}\in C^1(\Omega)$ such that as $t\to \infty$, the solution $(u,v,w)$ of \eqref{1.1} obtained in Theorem \ref{theorem 1.1} satisfies
\begin{align}\label{1.10}
    u(\cdot,t)\rightarrow u_{\infty}~~\mbox{in}~~L^\infty(\Omega),\quad v(\cdot,t)\rightarrow 0~~\mbox{in}~~W^{1,p}(\Omega)~~ \mbox{for all}~~p\geq 1,
\end{align}
and
\begin{align}
w(\cdot,t)\rightarrow w_{\infty}~~ \mbox{in}~~W^{1,\infty}(\Omega).
\end{align}
Moreover, for the given nonnegative function $u_0\not\equiv$ const., one can find
$\delta_2=\delta_2(K,u_0)>0$ such that whenever $u_0,$ $v_0$ and $w_0$ fulfill
\eqref{1.2} and \eqref{1.6}, as well as
$$\int_{\Omega}v_0\leq \delta_2,$$
the limit function satisfies
 $ u_{\infty}\not\equiv$ const..
\end{theorem}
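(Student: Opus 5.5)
The proof will rest on the uniform bound \eqref{1.8} from Theorem \ref{theorem 1.1} together with dissipation estimates coming from the $v$-equation. Integrating the second equation gives $\int_\Omega v(\cdot,t)+\int_0^t\int_\Omega vw=\int_\Omega v_0$, so $\int_0^\infty\int_\Omega vw\le\int_\Omega v_0$. Testing the same equation by $v$ gives $\int_0^\infty\int_\Omega|\nabla v|^2\le \frac12\int_\Omega v_0^2$.

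To obtain decay of $v$, I will first establish a positive lower bound for $w$ on $\Omega\times(1,\infty)$. Mass accounting gives $\int_\Omega u(\cdot,t)=\int_\Omega u_0+\ell\int_0^t\int_\Omega vw\ge\int_\Omega u_0>0$. Combining this with the Neumann heat semigroup lower bound $e^{t\Delta}\varphi\ge c\int_\Omega\varphi$ for $t\ge1$ should give $w\ge c_0>0$ for $t\ge1$. From the $L^1$ identity above it then follows that $\int_1^\infty\int_\Omega v<\infty$. Since $\int_\Omega v$ is nonincreasing, this forces $\int_\Omega v(\cdot,t)\to0$. Interpolating with the uniform $W^{1,\infty}$ bound, and if needed with higher Hölder regularity for $t\ge1$ via parabolic Schauder theory, yields $v\to0$ in $L^\infty$ and in $W^{1,p}$ for every $p$.

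For the convergence of $u$, I will use the weak formulation with a test function $\varphi\in C^1(\overline\Omega)$:
\[
\Big|\int_\Omega u(t)\varphi-\int_\Omega u(s)\varphi\Big|\le \int_s^t\!\!\int_\Omega \big(uv|\nabla u|+u^2v|\nabla v|\big)|\nabla\varphi|+\ell\|\varphi\|_\infty\int_s^t\!\!\int_\Omega vw .
\]
The last term is a tail of a convergent integral. The chemotactic term is bounded by $C\|\nabla\varphi\|_\infty\int_s^t\int_\Omega v|\nabla v|$, which I control through $(\int\int v)^{1/2}(\int\int v|\nabla v|^2)^{1/2}$ using the integrability established above; testing the $v$-equation by $\ln v$ or by $v$ should give finiteness of $\int\int v^{-1}|\nabla v|^2 v^2$-type quantities. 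For the diffusive term I invoke the paper's key functional inequality, namely the bound $\int_0^\infty\int_\Omega u^{k}v|\nabla u|^2<\infty$ for some $k\in(-1,1]$ obtained in the proof of Theorem \ref{theorem 1.1}. Then Cauchy--Schwarz gives
\[
\int\!\!\int uv|\nabla u|\le\Big(\int\!\!\int u^{k}v|\nabla u|^2\Big)^{1/2}\Big(\int\!\!\int u^{2-k}v\Big)^{1/2},
\]
and both factors are integrable on $(1,\infty)$ because $u$ is bounded and $v\in L^1(\Omega\times(1,\infty))$. Consequently $u(t)$ is Cauchy in $(W^{1,\infty})^*$. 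Uniform Hölder continuity of $u$ for $t\ge1$, which I take from the continuity proof of Theorem \ref{theorem 1.1}, together with Arzelà--Ascoli, then upgrades this to convergence in $L^\infty$ toward some $u_\infty\in C(\overline\Omega)$. I expect this upgrade to be the main obstacle: one needs Hölder bounds that are uniform as $t\to\infty$ despite the degeneracy $v\to0$. If those bounds are not available, I would settle for convergence in $L^p$ and in the weak-* topology, which still identifies the limit. The convergence $w\to w_\infty$ in $W^{1,\infty}$, where $w_\infty$ solves $-\Delta w_\infty+w_\infty=u_\infty$, follows from the variation-of-constants formula for $w-w_\infty$ and $L^\infty$ smoothing estimates for $\nabla e^{t(\Delta-1)}$.

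For the nonconstancy statement, I argue as follows. Since $u_0\not\equiv$ const., there exists $\varphi\in W^{1,\infty}$ with $\int_\Omega\varphi=0$ and $\int_\Omega u_0\varphi=:2\eta>0$. Theorem \ref{theorem1.2}, whose estimates pass to the limit $t\to\infty$, gives $\|u_\infty-u_0\|_{(W^{1,\infty})^*}\le\eta/\|\varphi\|_{W^{1,\infty}}$ once $\int_\Omega v_0\le\delta_2:=\delta_1(\eta/\|\varphi\|_{W^{1,\infty}},K)$. Therefore $\int_\Omega u_\infty\varphi\ge\eta>0$, which a constant $u_\infty$ cannot satisfy because $\int_\Omega\varphi=0$. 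Hence $u_\infty\not\equiv$ const., and $w_\infty$ is then nonconstant as well, since a constant $w_\infty$ would force $u_\infty=w_\infty$ to be constant.
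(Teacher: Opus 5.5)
\paragraph{Verdict.} Your treatment of $v$ is sound, as is the Cauchy property of $u(\cdot,t)$ in $(W^{1,\infty}(\Omega))^*$ (which can use $k=1$ from Lemma~\ref{lemma3.7} with $p=2$). The nonconstancy argument is also sound. Your mean-zero test function $\varphi$ is in fact slightly cleaner than the paper's localized $\psi$ combined with $\int_\Omega u_\infty\ge\int_\Omega u_0$, and it does not need $u_0$ to be continuous.

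\paragraph{The gap.} The genuine gap is exactly the step you flag: upgrading to $u(\cdot,t)\to u_\infty$ in $L^\infty(\Omega)$ with $u_\infty$ continuous.
\begin{itemize}
\item There is no uniform-in-time H\"older bound to take from the proof of Theorem~\ref{theorem 1.1}. The constants in Lemma~\ref{lemma2.12} depend on $T$ through the lower bound $v_\varepsilon\ge\|v_0\|_{L^\infty(\Omega)}e^{-\varsigma(1+t)}$ from \eqref{2.19*}. Since $v\to0$ exponentially, the diffusion in the $u$-equation degenerates as $t\to\infty$, so Porzio--Vespri-type estimates on $[1,\infty)$ in the original time variable give nothing uniform.
\item Your fallback does not rescue the statement. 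Convergence in $(W^{1,\infty})^*$ plus the $L^\infty$ bound only yields weak-$*$ convergence.
\item You have no spatial compactness to reach even strong $L^p$ convergence, because the only gradient information, $\int\!\!\int u^k v|\nabla u|^2$, carries the vanishing weight $v$. So neither $L^\infty$ convergence nor continuity of $u_\infty$ follows.
\item Your $w$-step, as sketched via $\sup_{s\ge t_0}\|u(s)-u_\infty\|_{L^\infty(\Omega)}$, inherits this gap.
\end{itemize}

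\paragraph{The missing idea.} What is missing is the time change of Lemma~\ref{lemma4.5*}.
\begin{itemize}
\item Set $L:=\int_0^\infty\|v(\cdot,s)\|_{L^\infty(\Omega)}ds<\infty$ and $\tau=\phi(t):=\frac1L\int_0^t\|v(\cdot,s)\|_{L^\infty(\Omega)}ds$. This maps $[0,\infty)$ onto $[0,1)$.
\item Then $m(x,\tau)=u(x,\phi^{-1}(\tau))$ solves $m_\tau=\nabla\cdot(a\,m\nabla m)-\nabla\cdot(b\,m^2)+\ell a w$, with $a=Lv/\|v\|_{L^\infty(\Omega)}$ and $b=Lv\nabla v/\|v\|_{L^\infty(\Omega)}$.
\item The key input is a Harnack inequality $v(x,t)\ge\lambda_*\|v(\cdot,t)\|_{L^\infty(\Omega)}$ for $t\ge1$, together with \eqref{2.19*} for $t\le1$. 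This makes the rescaled problem uniformly of porous-medium type up to $\tau=1$: $a$ is bounded above and below, and $b$ and $\ell a w$ are bounded, all uniformly in $\varepsilon$.
\item H\"older estimates for $m_\varepsilon$ on the closed set $\overline\Omega\times[0,1]$ then produce a limit $m\in C^0(\overline\Omega\times[0,1])$.
\item Uniform continuity in $\tau$ up to $\tau=1$ is precisely the statement that $u(\cdot,t)\to m(\cdot,1)=u_\infty$ in $L^\infty(\Omega)$ with $u_\infty$ continuous.
\end{itemize}
Without this device, or an equivalent one producing spatial equicontinuity of $\{u(\cdot,t)\}_{t\ge0}$, the first claim in \eqref{1.10} remains unproved.
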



The most crucial  step in our approach toward the basic solution theory for   \eqref{1.1} is to establish the global boundedness of
$\int_\Omega u^{\frac3 2}$ through a self-mapping type argument (see Lemma \ref{lemma3.5}).  In this direction, the challenge herein is  how to make appropriate use of the degenerate  action   of  the form $\int_{\Omega}u^{k}v|\nabla u|^2$. Indeed,
the cornerstone of this argument
 lies in effectively controlling
  $\int_{\Omega} u^{\beta} v$  ($\beta>\frac 52$) 
by the dissipated quantities   $\int_{\Omega}u^{k}v|\nabla u|^2$ (for suitable $k$)
and  the signal-weighted gradient term $\int_\Omega\frac{|\nabla v|^{\frac{2\beta}{\beta-k-2}}}{v^{\frac{2\beta}{\beta-k-2}-1}}$.
This  will be achieved   by utilizing  
a functional inequality of the form
$$\int_{\Omega}\varphi^{\beta}\psi\leq  2\int_{\Omega}\varphi^{k}\psi|\nabla \varphi|^2+C_1(k, \beta)\int_\Omega\frac{|\nabla \psi|^{\frac{2\beta}{\beta-k-2}}}{\psi^{\frac{2\beta}{\beta-k-2}-1}}+C_2(k, \beta, \int_\Omega \varphi)\int_{\Omega}\varphi \psi$$
 for smooth $\varphi> 0$ and $\psi>0$ and any $k>-1, \beta\in (k+2,k+\frac8 3)$ with some $C_1(k, \beta)>0, C_2(k, \beta, \int_\Omega \varphi)>0$ (Lemma \ref{lemma3.4}).  Accordingly, one can derive the uniform-in-time boundedness of $\int_\Omega u^{\frac3 2}$ by a loop-type argument (Lemma \ref{lemma3.5}), thanks to  the exponential decay of $v$
 (Lemma \ref{lemma2.3}) and the weighted estimate for $\int_\Omega |\nabla v|^q v^{-q+1}$ (Lemma \ref{lemma2.7}). Thereafter, on the basis  of improved integrability properties of $u$, we establish its $L^p$ boundedness (Lemma \ref{lemma3.7})  with the help of
the conditional functional  inequality:
  $$\int_{\Omega}\varphi^{\beta} \psi\leq \eta \int_{\Omega}\varphi^{p-1}\psi|\nabla
    \varphi|^2+\eta \int_{\Omega}\varphi\frac{|\nabla  \psi|^q}{{\psi}^{q-1}}+C(L)\int_{\Omega}\varphi \psi$$
  is valid with some $C(L)>0$ for any smooth functions $\varphi\geq  0$ and $\psi>0$ fulfilling
   $\int_{\Omega}\varphi^{\frac{3}{2}}\leq L$, $\eta>0$,
$p>1$, $q>4$ and $\beta\in [1,p+2)$ (Lemma \ref{lemma3.6}).
  To characterize the large-time behavior of solutions $(u,v,w)$ to \eqref{1.1}, we develop a temporal analysis framework that carefully accounts for time-dependent dynamics. The cornerstone of our approach involves duality-based
  estimation techniques, through which we achieve the large  time stabilization feature:
 \begin{equation}\label{1.12}
 \int_0^{\infty}\|u_t(\cdot,t)\|_{(W^{1,\infty}(\Omega))^*}dt\leq C(K)\cdot\left\{\int_{\Omega} v_0\right\}^\sigma,
 \end{equation}
where $C(K)>0$ and $\sigma>0$ are constants (Lemma \ref{lemma4.1}). The inequality \eqref{1.12} quantitatively captures how the initial nutrient distribution $v_0$ controls the long-term regularity of the solution components. In consequence, despite
  \eqref{1.1} has  the feature of double degeneracy and particularly possesses the  uncountable set of steady states,  \eqref{1.1}  generates large time behaviour less chaotic  than  that in  some cases studied in \cite{PS2002,PY2003,WJDDE}. Indeed,  our result asserts that
  for any suitably regular initial data, the corresponding
 solution of \eqref{1.1}  stabilizes toward  a steady state $(u_{\infty},0,w_{\infty})$ as $t\to \infty$.

  The structure of this paper is as follows: In Section 2,  we specify the weak solutions of system \eqref{1.1} and
 establishes fundamental a priori estimates for the regularized systems. Building upon these estimates and further higher regularity properties of the regularized systems, we then
    prove the global existence of $(u,v,w)$ via compactness arguments.
Section 3 is devoted to the derivation of the $L^{\frac3 2}$-boundedness for the first component $u_\varepsilon$ of
the approximated solutions through a  loop-type argument. 
 In Section 4, we investigates the stability properties of $(u,v,w)$ by means of a duality-based argument.
\section{Preliminaries}
In view of the fact that the diffusion mechanism in \eqref{1.1} contains a degeneracy of porous medium type, our existence theory will be carried out in the framework of the natural generalized solution concept specified as follows.
\begin{definition}\label{Definition 2.1} Assume  that \eqref{1.2} holds. Then a pair of nonnegative functions
\begin{equation*}
\left\{
\begin{aligned}
&u\in C^{0}(\overline{\Omega}\times [0,\infty)), \\
&v\in C^{0}(\overline{\Omega}\times [0,\infty))\cap C^{2,1}(\overline{\Omega}\times(0,\infty)),\\
&w\in C^{0}(\overline{\Omega}\times [0,\infty))\cap C^{2,1}(\overline{\Omega}\times(0,\infty))\\
\end{aligned}
\right.
\end{equation*}
such that
\begin{align}\label{2.1}
  u^2 \in L^{1}_{loc}([0,\infty); W^{1,1}(\Omega))\quad \hbox{and}\quad u^2\nabla v \in L^{1}_{loc}(\overline{\Omega}\times
  [0,\infty);\mathbb{R}^3)
\end{align}
 will be called a continuous weak solution of \eqref{1.1} if
\begin{align}\label{2.2}
    -\int_{0}^{\infty}\!\int_{\Omega}u\varphi_t-\int_{\Omega}u_0\varphi(\cdot,0)=
    &-\frac{1}{2}\int_{0}^{\infty} \!\int_{\Omega}
     v \nabla u^2 \cdot\nabla\varphi 
     +\int_{0}^{\infty}\!\int_{\Omega}u^{2}v\nabla v\cdot \nabla \varphi
     +\ell\int_{0}^{\infty}\!\int_{\Omega}vw\varphi
\end{align}
for all
 $\varphi \in C_0^{\infty}(\overline{\Omega}\times [0,\infty))$
 fulfilling $\frac{\partial \varphi}{\partial \nu}=0$ on $\partial\Omega\times(0,\infty)$, and if
\begin{align}\label{2.3}
\int_{0}^{\infty}\int_{\Omega}v\varphi_t+\int_{\Omega}v_0\varphi(\cdot,0)=\int_{0}^{\infty}\int_{\Omega}\nabla
v\cdot\nabla\varphi+\int_{0}^{\infty}\int_{\Omega}vw\varphi
\end{align}
for any $\varphi \in C_0^{\infty}(\overline{\Omega}\times [0,\infty))$, as well as
\begin{equation}\label{2.4}
    \int_{0}^{\infty}\int_{\Omega}w\varphi_t+\int_{\Omega}w_0\varphi(\cdot,0)=\int_{0}^{\infty}\int_{\Omega}\nabla
w\cdot\nabla\varphi-\int_{0}^{\infty}\int_{\Omega}w\varphi+\int_{0}^{\infty}\int_{\Omega}u\varphi
\end{equation}
for all $\varphi\in C_0^{\infty}(\overline{\Omega}\times[0,\infty))$.
\end{definition}

Next, in order to construct such  weak solutions to \eqref{1.1}, we adopt an approximate approach only through a modification in the initial data. 
Specifically, for $\varepsilon\in(0,1)$, we consider the regularized variant of \eqref{1.1} given by
\begin{equation}\label{2.5}
    \left\{\begin{array}{ll}
         u_{\varepsilon t}=\nabla\cdot(u_\varepsilon v_\varepsilon \nabla u_\varepsilon )-\nabla\cdot(u_\varepsilon ^2v_\varepsilon \nabla v_\varepsilon )+\ell v_\varepsilon w_\varepsilon ,& \quad x\in\Omega, t>0,\\
		v_{\varepsilon t}=\Delta v_\varepsilon -v_\varepsilon w_\varepsilon ,&\quad x\in\Omega, t>0,\\
		w_{\varepsilon t}=\Delta w_\varepsilon -w_\varepsilon +u_\varepsilon ,&\quad x\in\Omega, t>0,\\
	\frac{\partial u_\varepsilon}{\partial\nu}=\frac{\partial v_\varepsilon}{\partial\nu}=\frac{\partial w_\varepsilon}{\partial\nu}=0,
&\quad x\in\partial\Omega, t>0,\\
		u_\varepsilon (\cdot,0)=u_{0}(x)+\varepsilon ,v_\varepsilon (\cdot,0)=v_{0}(x),w_\varepsilon (\cdot,0)=w_{0}(x),&\quad x\in\Omega,
    \end{array}
    \right.
\end{equation}
which, according to Lemma 2.1 of \cite{WNARWA}, admits globally defined classical solutions enjoying  a handy extensibility  criterion: 

\begin{lemma}\label{lemma2.1}
Let $\Omega \subset \mathbb R^3$ be a bounded domain with smooth boundary, $\ell\geq 0$, and assume that \eqref{1.2} holds.  Then for each $\varepsilon\in(0,1)$, there exists
$T_{max,\varepsilon}\in(0,\infty]$ and functions
\begin{equation*}
\left\{
\begin{aligned}
&u_{\varepsilon }\in C^0(\overline{\Omega}\times [0,T_{max,\varepsilon}))\cap C^{2,1}(\overline{\Omega}\times(0,T_{max,\varepsilon})), \\
&v_{\varepsilon }\in  \textstyle\bigcap_{q>3} C^0([0,T_{max,\varepsilon});W^{1,q}(\Omega))\cap C^{2,1}(\overline{\Omega}\times(0,T_{max,\varepsilon})),\\
&w_{\varepsilon }\in  C^0(\overline{\Omega}\times[0,T_{max,\varepsilon}))\cap C^{2,1}(\overline{\Omega}\times(0,T_{max,\varepsilon})) \\
\end{aligned}
\right.
\end{equation*}
such that $u_ \varepsilon >0$, $v_ \varepsilon >0$ and $w_{\varepsilon}>0$ in $\overline{\Omega} \times [0,T_{max,\varepsilon})$, that $(u_{\varepsilon
},v_{\varepsilon },w_{\varepsilon })$ solves \eqref{2.5} classically in $\Omega\times(0,T_{max,\varepsilon})$, and that
  \begin{equation} \label{2.6}
   \hbox{if}~~  T_{max,\varepsilon}<\infty, ~~ \mbox{then}~~
   \limsup_{t\nearrow T_{max,\varepsilon}}\|u_\varepsilon(\cdot , t)\|_{L^{\infty}(\Omega)}=\infty.
\end{equation}
Furthermore, this solution satisfies
\begin{align}\label{2.7}
    \|v_{\varepsilon}(\cdot , t)\|_{L^{\infty}(\Omega)}\leq \|v_{0 }\|_{L^{\infty}(\Omega)}\qquad \mbox{for all}~~ t\in(0,T_{max,\varepsilon}),
\end{align}
\begin{align}\label{2.8}
    \int_{\Omega}u_{0}\leq \int_{\Omega}u_{\varepsilon}(\cdot,t)\leq m:=\int_\Omega u_0+\ell\int_\Omega v_0\qquad \mbox{for
    all}~~ t\in(0,T_{max,\varepsilon})
\end{align}
and
\begin{align}\label{2.9}
    \int_{\Omega}w_{\varepsilon}(\cdot,t)\leq\max\bigg\{m,\int_\Omega w_0\bigg\}\qquad \mbox{for
    all}~~ t\in(0,T_{max,\varepsilon})
\end{align}
as well as
\begin{align}\label{2.10}
    \int_{0}^{T_{max,\varepsilon}}\int_{\Omega}w_{\varepsilon }v_{\varepsilon }\leq \int_{\Omega}v_{0}.
\end{align}
\end{lemma}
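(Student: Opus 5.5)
The plan has two parts: local existence with the extensibility criterion \eqref{2.6}, then the elementary bounds \eqref{2.7}–\eqref{2.10} from the structure of \eqref{2.5}.

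\textbf{Local existence and positivity.} Since $u_{0}+\varepsilon\ge\varepsilon>0$ and $v_0>0$ on $\overline\Omega$, the $u_\varepsilon$-equation is uniformly parabolic near $t=0$. I would adapt the fixed-point argument of Lemma 2.1 in \cite{WNARWA}. Fix $T$ small and a closed set $S$ of triples $(\bar u,\bar v,\bar w)$ with $\bar u\ge \varepsilon/2$ and $\bar v\ge \frac12\min v_0$, bounded in $C^0$ for $\bar u,\bar w$ and in $C^0([0,T];W^{1,q})$ for $\bar v$ with $q>3$.

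\begin{itemize}
\item Solve the linear problems $v_t=\Delta v-v\bar w$ and $w_t=\Delta w-w+\bar u$ by semigroup estimates.
\item Solve the quasilinear $u$-equation with frozen coefficients by Amann's theory or Ladyzhenskaya--Ural'tseva.
\item Check via H\"older regularity that the resulting map sends $S$ into itself and is compact for small $T$, then apply Schauder's theorem.
\end{itemize}

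Parabolic Schauder estimates then give $C^{2,1}$ regularity for $t>0$. Positivity follows from the strong maximum principle. For $w_\varepsilon$, use $w_{\varepsilon t}-\Delta w_\varepsilon+w_\varepsilon=u_\varepsilon>0$. For $v_\varepsilon$, the equation is linear in $v_\varepsilon$ with bounded potential $w_\varepsilon$. For $u_\varepsilon$, expand the first equation as a linear equation in $u_\varepsilon$ with bounded coefficients; the source $\ell v_\varepsilon w_\varepsilon\ge0$, so comparison with a subsolution $c\,e^{-Mt}$ keeps $u_\varepsilon$ away from zero on compact time intervals.

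\textbf{Extensibility criterion.} This is the step I expect to be the main obstacle. Suppose $T_{max,\varepsilon}<\infty$ but $\|u_\varepsilon\|_{L^\infty}$ stays bounded.

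\begin{itemize}
\item The $w$-equation gives $w_\varepsilon$ bounded in $W^{1,\infty}$ by semigroup smoothing.
\item The $v$-equation then gives $v_\varepsilon$ bounded in $W^{1,q}$, and in fact in $C^{1+\theta}$.
\item Nondegeneracy needs lower bounds. For $v_\varepsilon$, comparison with $\min v_0\, e^{-\|w_\varepsilon\|_\infty t}$ works. For $u_\varepsilon$, the subsolution argument above applies, since all coefficients are now bounded on the finite interval.
\end{itemize}

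With the equation uniformly parabolic and coefficients bounded, H\"older estimates for $u_\varepsilon$ followed by Schauder estimates allow restarting the solution past $T_{max,\varepsilon}$, a contradiction.

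\textbf{The bounds \eqref{2.7}–\eqref{2.10}.}

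\begin{itemize}
\item \eqref{2.7}: $v_\varepsilon$ is a subsolution of the heat equation, since $-v_\varepsilon w_\varepsilon\le 0$, so the maximum principle gives the bound.
\item \eqref{2.10}: integrate the $v$-equation to get $\frac{d}{dt}\int_\Omega v_\varepsilon=-\int_\Omega v_\varepsilon w_\varepsilon$. Integrating in time and using $v_\varepsilon>0$ yields \eqref{2.10}.
\item \eqref{2.8}: integrate the $u$-equation; the flux terms vanish by the Neumann conditions, giving $\frac{d}{dt}\int_\Omega u_\varepsilon=\ell\int_\Omega v_\varepsilon w_\varepsilon\ge0$. Hence
\[
\int_\Omega u_0\le\int_\Omega u_\varepsilon(\cdot,t)\le \int_\Omega(u_0+\varepsilon)+\ell\int_\Omega v_0 .
\]
The extra $\varepsilon|\Omega|$ means \eqref{2.8} must be read with $m$ including $\varepsilon|\Omega|$. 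Alternatively, since $\varepsilon<1$, I would replace $m$ by $\int_\Omega u_0+|\Omega|+\ell\int_\Omega v_0$, which is harmless for later use.
\item \eqref{2.9}: with $y(t)=\int_\Omega w_\varepsilon(\cdot,t)$, integrating the $w$-equation gives $y'=-y+\int_\Omega u_\varepsilon\le -y+m$. An ODE comparison then yields $y\le\max\{m,y(0)\}$, where $y(0)=\int_\Omega w_0$.
\end{itemize}
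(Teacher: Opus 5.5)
This is essentially the paper's argument: the paper obtains existence, positivity and the extensibility criterion \eqref{2.6} by adapting Lemma 2.1 of \cite{WNARWA}, and gets \eqref{2.7}--\eqref{2.10} from the maximum principle and integration of the equations, as you do. Your correction of \eqref{2.8} is right and in fact necessary, since $\int_\Omega u_\varepsilon(\cdot,t)=\int_\Omega u_0+\varepsilon|\Omega|+\ell\int_0^t\int_\Omega v_\varepsilon w_\varepsilon$ already exceeds the stated $m$ when $\ell=0$ (enlarging $m$ by $|\Omega|$ is harmless later); in the same spirit, $w_\varepsilon>0$ can only be guaranteed for $t>0$ because $w_0$ may vanish, and continuity of $u_\varepsilon$ up to $t=0$ needs $u_0\in C^0(\overline\Omega)$, which your $C^0$-based fixed-point setting tacitly uses.
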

\begin{proof}
By the arguments in Lemma 2.1 in \cite{WNARWA} with possible minor modifications, we can derive \eqref{2.6}, while employing the maximum principle as well as straightforward integration in the equations of \eqref{2.5} we readily obtain \eqref{2.7}--\eqref{2.10}.
\end{proof}

By the known result of the Neumann heat semigroup $(e^{t\Delta})_{t\geq0}$ in $\Omega$,
it seems favorable to achieve lower bounds for $w_\varepsilon$.


\begin{lemma}\label{lemma2.2}Let
$t_0:=\min\{1,\frac{T_{\max,\varepsilon}}{6}\}$.
There exists constant $\kappa=\kappa(t_0)>0$ independent of $\varepsilon$ such that
$$w_\varepsilon(x,t)\geq\kappa~~\mbox{for all}~~x\in\Omega~\mbox{and}~t\in(2t_0,T_{\max,\varepsilon}).$$
\end{lemma}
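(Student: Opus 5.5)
We aim for a lower bound for $w_\varepsilon$ on $(2t_0,T_{\max,\varepsilon})$ that is independent of $\varepsilon$. The plan is to write $w_\varepsilon$ through the variation-of-constants formula and combine it with the classical pointwise lower bound for the Neumann heat semigroup:
\begin{equation*}
e^{t\Delta}\phi\geq c_0(\tau)\int_\Omega\phi\quad\mbox{in }\Omega\quad\mbox{for all }t\in[\tau,\infty)\mbox{ and all nonnegative }\phi\in C^0(\overline\Omega),
\end{equation*}
where $c_0(\tau)>0$ depends only on $\tau>0$ and $\Omega$. This bound follows from the strict positivity of the Neumann heat kernel on $\overline\Omega\times\overline\Omega\times[\tau,2\tau]$ together with the semigroup property and mass conservation, which extend it to all $t\geq\tau$. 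Since $u_\varepsilon>0$ and $w_\varepsilon>0$, and since $w_\varepsilon$ satisfies $w_{\varepsilon t}=\Delta w_\varepsilon-w_\varepsilon+u_\varepsilon$, for any $t\in(2t_0,T_{\max,\varepsilon})$ we have
\begin{equation*}
w_\varepsilon(\cdot,t)=e^{-(t-s_0)}e^{(t-s_0)\Delta}w_\varepsilon(\cdot,s_0)+\int_{s_0}^t e^{-(t-s)}e^{(t-s)\Delta}u_\varepsilon(\cdot,s)\,ds\quad\mbox{for any }s_0\in[0,t).
\end{equation*}

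The key choice is to drop the first term and keep only the source contribution from a window of fixed length bounded away from $t$. Take $s_0=t-2t_0\geq0$ and restrict the integral to $s\in(t-2t_0,t-t_0)$, so that $t-s\in(t_0,2t_0)$. On this window the semigroup lower bound with $\tau=t_0$ applies. Combined with the mass bound $\int_\Omega u_\varepsilon(\cdot,s)\geq\int_\Omega u_0$ from \eqref{2.8}, it gives
\begin{equation*}
w_\varepsilon(\cdot,t)\geq\int_{t-2t_0}^{t-t_0}e^{-2t_0}c_0(t_0)\int_\Omega u_0\,ds=t_0e^{-2t_0}c_0(t_0)\int_\Omega u_0=:\kappa .
\end{equation*}
Here $\int_\Omega u_0>0$ because $u_0\not\equiv0$, so $\kappa>0$, and $\kappa$ depends only on $t_0$, $\Omega$ and $u_0$, not on $\varepsilon$.

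The main technical point is justifying the heat-kernel lower bound uniformly for $t-s\in[t_0,2t_0]$. I would quote it from the literature on the Neumann heat semigroup: the Neumann heat kernel $G$ is continuous and positive on $\overline\Omega\times\overline\Omega\times(0,\infty)$, so it has a positive minimum on the compact set $\overline\Omega\times\overline\Omega\times[t_0,2t_0]$. A secondary subtlety is that $t_0$ depends on $T_{\max,\varepsilon}$, so $\kappa$ is uniform in $\varepsilon$ only once $T_{\max,\varepsilon}$ is known to be large, or one simply accepts the stated dependence $\kappa=\kappa(t_0)$. The choice $2t_0$ ensures that the interval $(t-2t_0,t-t_0)$ lies inside $[0,t)$, which is all that is needed.
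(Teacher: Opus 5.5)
Your proposal is correct and follows essentially the paper's argument. Both use the variation-of-constants formula for $w_\varepsilon$, discard the homogeneous term, and apply the Neumann heat semigroup lower bound $e^{t\Delta}\phi\geq C(\tau)\int_\Omega\phi$ with $\tau=t_0$ together with $\int_\Omega u_\varepsilon\geq\int_\Omega u_0$ from \eqref{2.8}. The only difference is cosmetic: the paper integrates the source over $(0,t-t_0)$ to get $\kappa=C(t_0)\|u_0\|_{L^1(\Omega)}(e^{-t_0}-e^{-2t_0})$, whereas your window $(t-2t_0,t-t_0)$ gives $t_0e^{-2t_0}c_0(t_0)\int_\Omega u_0$ and only needs the kernel bound for $t-s\in[t_0,2t_0]$.
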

\begin{proof}
According to the known result of Neumann heat semigroup $(e^{t\Delta})_{t\geq0}$ in $\Omega$ (\cite{Hillen(2013)}, Lemma 3.1), for given $\tau>0$, there exists $C(\tau)>0$ such that
\begin{equation}\label{2.11*}
e^{t\Delta}\phi\geq C(\tau)\int_{\Omega}\phi~~\mbox{for all}~~t\geq\tau~~\mbox{and each non-negative}~\phi\in C^0(\overline{\Omega}).
\end{equation}
By the variation-of-constant formula to the third equation of \eqref{2.5}, we have
\begin{equation}\label{2.11}
w_\varepsilon(\cdot,t)=e^{t(\Delta-1)}w_0(\cdot)+\int_0^t e^{(t-s)(\Delta-1)}u_\varepsilon(\cdot,s)ds~~\mbox{for all}~~t\geq0.
\end{equation}
Therefore, using  \eqref{2.8} and \eqref{2.11*}, we obtain that for $t\in(2t_0,T_{\max,\varepsilon})$
\begin{equation*}
\begin{split}
w_\varepsilon(\cdot,t)&=e^{t(\Delta-1)}w_0(\cdot)+\int_0^t e^{(t-s)(\Delta-1)}u_\varepsilon(\cdot,s)ds\\
&\geq\int_{0}^{t-t_0} e^{(t-s)(\Delta-1)}u_\varepsilon(\cdot,s)ds\\
&\geq C(t_0)\int_{0}^{t-t_0} e^{-(t-s)}\int_{\Omega}u_\varepsilon(\cdot,s)dxds\\
&\geq C(t_0)\|u_0\|_{L^1(\Omega)}\cdot\int_{0}^{t-t_0} e^{-(t-s)}ds\\
&\geq C(t_0)\|u_0\|_{L^1(\Omega)}(e^{-t_0}-e^{-2t_0})=:\kappa.
\end{split}
\end{equation*}
This completes proof.
\end{proof}
With the above statement at hand, we can derive the exponential decay property for the solution component $v_\varepsilon$, which plays a crucial role in our subsequent analysis.
\begin{lemma}\label{lemma2.3}
Let
$t_0:=\min\{1,\frac{T_{\max,\varepsilon}}{6}\}$ and $\kappa:=\kappa(t_0)>0$ as defined in Lemma \ref{lemma2.2}. Then for all $t\in(0,T_{\max,\varepsilon})$, we have
\begin{equation}\label{2.12}
v_\varepsilon(x,t)\leq e^{2\kappa t_0}\|v_0\|_{L^{\infty}(\Omega)}e^{-\kappa t}.
\end{equation}
\end{lemma}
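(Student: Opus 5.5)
We split the time interval at $2t_0$ and argue by comparison in the second equation of \eqref{2.5}.

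\emph{Late times.} On $(2t_0,T_{\max,\varepsilon})$, Lemma \ref{lemma2.2} gives $w_\varepsilon\geq\kappa$. Since $v_\varepsilon>0$, the function $v_\varepsilon$ is a subsolution of the linear problem
\[
z_t=\Delta z-\kappa z,\qquad \partial_\nu z=0 .
\]
Indeed, $v_{\varepsilon t}-\Delta v_\varepsilon+\kappa v_\varepsilon=(\kappa-w_\varepsilon)v_\varepsilon\leq 0$ there. The spatially homogeneous function
\[
\overline v(t):=\|v_\varepsilon(\cdot,2t_0)\|_{L^\infty(\Omega)}\,e^{-\kappa(t-2t_0)}
\]
solves this problem with equality and satisfies $\overline v(2t_0)\geq v_\varepsilon(\cdot,2t_0)$. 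Both functions satisfy homogeneous Neumann conditions. The parabolic comparison principle for classical solutions, applied on $\overline\Omega\times[2t_0,T)$ for each $T<T_{\max,\varepsilon}$, then yields $v_\varepsilon\leq\overline v$ for $t\geq 2t_0$. Combining this with \eqref{2.7} gives
\[
v_\varepsilon(x,t)\leq \|v_0\|_{L^\infty(\Omega)}\,e^{2\kappa t_0}e^{-\kappa t}\qquad\mbox{for } t\geq 2t_0 .
\]

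\emph{Early times.} For $t\in(0,2t_0]$ we have $e^{2\kappa t_0}e^{-\kappa t}\geq 1$. Hence \eqref{2.7} alone already gives
\[
v_\varepsilon(x,t)\leq\|v_0\|_{L^\infty(\Omega)}\leq e^{2\kappa t_0}\|v_0\|_{L^\infty(\Omega)}e^{-\kappa t}.
\]
Together the two ranges cover all of $(0,T_{\max,\varepsilon})$, which establishes \eqref{2.12}.

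\emph{Main obstacle.} The only delicate point is applying the comparison principle starting at the time $2t_0$, where $v_\varepsilon$ is already $C^{2,1}$, so regularity is not an issue. One must also note that $2t_0<T_{\max,\varepsilon}$, which holds because $t_0\leq T_{\max,\varepsilon}/6$; this guarantees that the late-time interval is nonempty. Finally, $\kappa$ is independent of $\varepsilon$, so the estimate is uniform in $\varepsilon$.
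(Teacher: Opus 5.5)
Your proof is correct and is essentially the paper's argument. On $(2t_0,T_{\max,\varepsilon})$ you compare with the spatially homogeneous solution of $\overline v_t=-\kappa\overline v$, justified by $w_\varepsilon\geq\kappa$ from Lemma \ref{lemma2.2}, and on $(0,2t_0]$ you use \eqref{2.7} with $e^{\kappa(2t_0-t)}\geq 1$. The only cosmetic difference is that you start the comparison function at $\|v_\varepsilon(\cdot,2t_0)\|_{L^\infty(\Omega)}$ instead of at $\|v_0\|_{L^\infty(\Omega)}$ as the paper does.

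One small caution about your closing remark. Because $t_0=\min\{1,T_{\max,\varepsilon}/6\}$, the constant $\kappa(t_0)$ is only genuinely independent of $\varepsilon$ once $t_0$ is fixed, for instance $t_0=1$ after Lemma \ref{lemma2.11}. This does not affect the statement as given.
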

\begin{proof}
Due to $w_\varepsilon(x,t)\geq\kappa$ for all $x\in\Omega$ and $t\in(2t_0,T_{\max,\varepsilon})$ by Lemma \ref{lemma2.2}, we have
$$v_{\varepsilon t}=\Delta v_\varepsilon-w_\varepsilon v_\varepsilon\leq\Delta v_\varepsilon-\kappa v_\varepsilon~~\mbox{for all}~~t\in(2t_0,T_{\max,\varepsilon}).$$
By means of a straightforward comparison with the ODE
$$\overline{v_{\varepsilon}}_t=-\kappa \overline{v_\varepsilon},~~~~\overline{v_\varepsilon}(2t_0)=\|v_0\|_{L^\infty(\Omega)},$$
we get
$$v_\varepsilon(x,t)\leq \overline{v_\varepsilon}(t)=
e^{2\kappa t_0}\|v_0\|_{L^\infty(\Omega)}e^{-\kappa t}~~\mbox{for all}~~t\in(2t_0,T_{\max,\varepsilon}).$$
 On the other hand, it follows from  \eqref{2.7} that for $t\in(0,2t_0)$,
\begin{equation*}
v_\varepsilon(\cdot,t)\leq\|v_0\|_{L^{\infty}(\Omega)}\leq e^{2\kappa t_0}\|v_0\|_{L^{\infty}(\Omega)}e^{-\kappa t}
\end{equation*}
due to $e^{\kappa(2t_0-t)}\geq1$.
Hence, we arrive at \eqref{2.12} readily.
\end{proof}
It is observed  that according to  $L^p-L^q$ estimates for the Neumann heat semigroup (\cite{Winkler(2010)}), $ \|w_\varepsilon(\cdot,t)\|_{L^{\mu}(\Omega)}$ with suitable $\mu>0$ can be controlled by $\|u_\varepsilon(\cdot,t)\|_{L^p(\Omega)}$.
\begin{lemma}\label{lemma2.4}
For $p\geq 1$ and  $\mu<\frac{3p}{(3-2p)_+}$, 
there exists constant $C(p,\mu)>0$ such that for all $t\in(0,T_{\max,\varepsilon})$,
\begin{equation}\label{2.14}
    \|w_\varepsilon(\cdot,t)\|_{L^{\mu}(\Omega)}\leq C(p,\mu)(\|w_{0}\|_{L^{\mu}(\Omega)}+\sup\limits_{0<s<T_{\max,\varepsilon}}\|u_\varepsilon(\cdot,s)\|_{L^p(\Omega)}).
\end{equation}

\end{lemma}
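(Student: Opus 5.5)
We will argue via the variation-of-constants representation \eqref{2.11}, namely
$$w_\varepsilon(\cdot,t)=e^{t(\Delta-1)}w_0+\int_0^t e^{(t-s)(\Delta-1)}u_\varepsilon(\cdot,s)\,ds,$$
and estimate the two terms separately in $L^\mu(\Omega)$. The first term is handled by the Neumann heat semigroup being a contraction on every $L^\mu(\Omega)$. Since $e^{-t}\le1$, this gives $\|e^{t(\Delta-1)}w_0\|_{L^\mu(\Omega)}\le\|w_0\|_{L^\mu(\Omega)}$ for all $t>0$.

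For the Duhamel integral we use the known $L^p$--$L^\mu$ smoothing estimate from \cite{Winkler(2010)}. For $1\le p\le\mu\le\infty$ there is $C_1>0$ such that
$$\|e^{\tau\Delta}\phi\|_{L^\mu(\Omega)}\le C_1\bigl(1+\tau^{-\frac32(\frac1p-\frac1\mu)}\bigr)\|\phi\|_{L^p(\Omega)}\quad\text{for all }\tau>0 \text{ and } \phi\in L^p(\Omega).$$
We first treat the case $\mu\ge p$; if $\mu<p$ we simply bound the $L^\mu$ norm by a constant multiple of the $L^p$ norm via Hölder's inequality on the bounded domain $\Omega$. Applying the smoothing estimate inside the integral, with $M:=\sup_{0<s<T_{\max,\varepsilon}}\|u_\varepsilon(\cdot,s)\|_{L^p(\Omega)}$, we obtain
$$\Bigl\|\int_0^t e^{(t-s)(\Delta-1)}u_\varepsilon(\cdot,s)\,ds\Bigr\|_{L^\mu(\Omega)}\le C_1M\int_0^\infty\bigl(1+\tau^{-\frac32(\frac1p-\frac1\mu)}\bigr)e^{-\tau}\,d\tau .$$

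The only point requiring care is that this last integral is finite, i.e.\ that the singularity at $\tau=0$ is integrable. This needs $\frac32\bigl(\frac1p-\frac1\mu\bigr)<1$, which is equivalent to $\frac1\mu>\frac1p-\frac23=\frac{3-2p}{3p}$. If $p\ge\frac32$ the right-hand side is non-positive, so every finite $\mu$ is admissible; this matches the convention $\frac{3p}{(3-2p)_+}=\infty$. If $p<\frac32$ the condition reads $\mu<\frac{3p}{3-2p}$, which is exactly the hypothesis. In either case the integral equals $C_1\bigl(1+\Gamma(1-\frac32(\frac1p-\frac1\mu))\bigr)$, a constant depending only on $p$ and $\mu$.

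Adding the two estimates yields \eqref{2.14} with $C(p,\mu):=\max\bigl\{1,\;C_1\bigl(1+\Gamma(1-\tfrac32(\tfrac1p-\tfrac1\mu))\bigr)\bigr\}$, possibly enlarged by the Hölder constant in the case $\mu<p$. This constant is independent of $\varepsilon$ and of $t$. Since the statement is a routine application of semigroup smoothing, no genuine obstacle is expected beyond the exponent check above.
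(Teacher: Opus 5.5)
Your proposal is correct and follows essentially the same route as the paper: the Duhamel formula for $w_\varepsilon$, $L^\mu$-contractivity for the $w_0$ term, the $L^p$--$L^\mu$ smoothing estimate inside the Duhamel integral, and the exponent check $\frac32(\frac1p-\frac1\mu)<1$ that makes $\int_0^\infty(1+\sigma^{-\frac32(\frac1p-\frac1\mu)})e^{-\sigma}d\sigma$ finite. The only cosmetic difference is that the paper splits $u_\varepsilon=(u_\varepsilon-\overline{u_\varepsilon})+\overline{u_\varepsilon}$, because the estimate in \cite{Winkler(2010)} is stated for mean-zero functions; you instead use the version for arbitrary $\phi\in L^p(\Omega)$, which follows from the mean-zero one by exactly this splitting, and let the factor $e^{-\tau}$ coming from the $-1$ supply integrability at infinity.
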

\begin{proof}[Proof]
	By means of the variation-of-constants formula for $w_\varepsilon$, we have
	\begin{equation*}
		w_\varepsilon(\cdot,t)=e^{t(\Delta-1)}w_{0}+\int_{0}^{t}e^{(t-s)(\Delta-1)}u_\varepsilon(\cdot,s)ds.
	\end{equation*}
Letting $\overline{u_\varepsilon}(t) = \frac{1}{|\Omega|}\int_\Omega u_\varepsilon(x,t) dx$. Then by the Neumann heat semigroup estimates in \cite{Winkler(2010)} and  \eqref{2.8}, one can find constants $c_i>0, i=1,2,3$,
 such that
\begin{align*}
		&\|w_\varepsilon(\cdot,t)\|_{L^{\mu}(\Omega)}\\
\leq&\|e^{t(\Delta-1)}w_{0}\|_{L^{\mu}(\Omega)}+\int_{0}^{t}\|e^{(t-s)(\Delta-1)}(u_\varepsilon-\overline{u_\varepsilon})(\cdot,s)\|_{L^{\mu}(\Omega)}ds+\int_{0}^{t}\|e^{(t-s)(\Delta-1)}\overline{u_\varepsilon}(s)\|_{L^{\mu}(\Omega)}ds\\
\leq & c_1\|w_{0}\|_{L^{\mu}(\Omega)}+
c_1\int_{0}^{t}(1+(t-s)^{-\frac{3}{2}(\frac1 p-\frac{1}{\mu})})e^{-(t-s)}\|u_\varepsilon-\overline{u_\varepsilon}\|_{L^{p}(\Omega)}ds+
\frac{c_2}{|\Omega|}\int_\Omega u_\varepsilon\int_0^t e^{-(t-s)}ds\\
		\leq & c_1\big\|w_{0}\|_{L^{\mu}(\Omega)}+\sup\limits_{0<s<T_{\max,\varepsilon}}\|u_\varepsilon(\cdot,s)\|_{L^p(\Omega)}\cdot\bigg(c_3\int_{0}^{\infty}(1+\sigma^{-\frac{3}{2}(\frac1 p-\frac{1}{\mu})})e^{-\sigma}d\sigma+c_2|\Omega|^{-\frac1 p}\bigg)\\
		\leq &
C(p,\mu)(\|w_{0}\|_{L^{\mu}(\Omega)}+\sup\limits_{0<s<T_{\max,\varepsilon}}\|u_\varepsilon(\cdot,s)\|_{L^p(\Omega)})
\end{align*}
with $C(p,\mu):=\{c_1,c_2|\Omega|^{-\frac 1p}+c_3\int_{0}^{\infty}(1+\sigma^{-\frac{3}{2}(\frac1 p-\frac{1}{\mu})})e^{-\sigma}d\sigma\}<\infty$. 
\end{proof}
As a crucial step in our subsequent analysis, we establish a local point-wise lower bound estimate for $v_\varepsilon$, although $v_\varepsilon$ decays exponentially as $t\to\infty$ asserted in  Lemma \ref{lemma2.3}.
\begin{lemma}\label{lemma2.5}
For all $T>0$, there exists $C(T)>0$ such that
\begin{equation}\label{2.22}
v_\varepsilon(\cdot,t)\geq C(T)~~\mbox{for all}~~t\in(0,T)\cap(0,T_{\max,\varepsilon}).
\end{equation}
\end{lemma}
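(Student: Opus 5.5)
The plan is to pass to the logarithm of $v_\varepsilon$, which turns the absorption term $-v_\varepsilon w_\varepsilon$ into an additive source. Comparison then reduces the lower bound for $v_\varepsilon$ to an upper bound for a heat-semigroup integral of $w_\varepsilon$. The point is that no $L^\infty$ bound for $w_\varepsilon$ (hence for $u_\varepsilon$) is needed, only the mass bound \eqref{2.8}.

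\textbf{Step 1: the equation for $z_\varepsilon$.} Since $v_\varepsilon>0$ is classical on $\overline\Omega\times(0,T_{max,\varepsilon})$, we set $z_\varepsilon:=-\ln v_\varepsilon$. A direct computation gives
\begin{equation*}
z_{\varepsilon t}=\Delta z_\varepsilon-|\nabla z_\varepsilon|^2+w_\varepsilon\leq \Delta z_\varepsilon+w_\varepsilon ,
\end{equation*}
with $\partial_\nu z_\varepsilon=0$ on $\partial\Omega$. The initial datum satisfies $z_\varepsilon(\cdot,0)=-\ln v_0\leq -\ln(\min_{\overline\Omega}v_0)=:c_0$, which is finite because $v_0>0$ on $\overline\Omega$ by \eqref{1.2}.

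\textbf{Step 2: comparison.} By the parabolic comparison principle (equivalently, the order-preserving property of the Neumann heat semigroup),
\begin{equation*}
z_\varepsilon(\cdot,t)\leq c_0+\int_0^t e^{(t-s)\Delta}w_\varepsilon(\cdot,s)\,ds \qquad\mbox{for all } t\in(0,T_{max,\varepsilon}).
\end{equation*}
Making this rigorous near $t=0$ only requires the regularity of $v_\varepsilon$ listed in Lemma \ref{lemma2.1}.

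\textbf{Step 3: an $\varepsilon$-independent $L^2$ bound for $w_\varepsilon$.} Apply Lemma \ref{lemma2.4} with $p=1$ and $\mu=2$; this is admissible since $\mu<\frac{3p}{(3-2p)_+}=3$. Combined with \eqref{2.8} and $w_0\in L^\infty(\Omega)$, it yields $\|w_\varepsilon(\cdot,s)\|_{L^2(\Omega)}\leq c_1$ for all $s\in(0,T_{max,\varepsilon})$, with $c_1$ depending only on $K$ and $\Omega$.

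\textbf{Step 4: smoothing estimate.} The Neumann semigroup satisfies $\|e^{\tau\Delta}\phi\|_{L^\infty(\Omega)}\leq c_2(1+\tau^{-\frac34})\|\phi\|_{L^2(\Omega)}$. Hence
\begin{equation*}
\|z_\varepsilon(\cdot,t)\|_{L^\infty(\Omega)}^{+}\leq c_0+c_1c_2\int_0^t\big(1+(t-s)^{-\frac34}\big)ds\leq c_0+c_1c_2\,(T+4T^{\frac14})
\end{equation*}
for $t<T$. Exponentiating gives $v_\varepsilon\geq e^{-c_0-c_1c_2(T+4T^{1/4})}=:C(T)$, which is independent of $\varepsilon$.

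The main obstacle I expect is Step 3–4, namely choosing the exponent $\mu$ correctly. Only the $L^1$ information on $u_\varepsilon$ is available at this stage, so $w_\varepsilon$ is controlled only in $L^\mu$ with $\mu<3$. On the other hand, the time singularity $(t-s)^{-\frac{3}{2\mu}}$ is integrable only if $\mu>\frac32$. Any $\mu\in(\frac32,3)$ works, and $\mu=2$ is a convenient choice. If the comparison in Step 2 needs more care, a fallback is to argue with $v_\varepsilon$ directly, comparing with the supersolution structure only after regularizing $z_\varepsilon$. I expect the logarithmic transformation to suffice, since all functions involved are classical for $t>0$ and continuous up to $t=0$.
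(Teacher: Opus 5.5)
Your proposal is correct and follows the paper's proof essentially step for step. The common ingredients are the logarithmic substitution for $z_\varepsilon$, discarding $-|\nabla z_\varepsilon|^2$ through order preservation of the Neumann semigroup, and feeding the $L^2$ bound for $w_\varepsilon$ from Lemma \ref{lemma2.4} (with $p=1$, $\mu=2$) into the $L^2$--$L^\infty$ smoothing estimate. The only differences are cosmetic: the paper normalizes by $\|v_0\|_{L^\infty(\Omega)}$ and splits $w_\varepsilon$ into $\overline{w_\varepsilon}$ plus a mean-zero part to get $v_\varepsilon\geq\|v_0\|_{L^\infty(\Omega)}e^{-\varsigma(1+t)}$, whereas your direct $(1+\tau^{-3/4})$ estimate gives the equally sufficient bound $c_0+c_1c_2(T+4T^{1/4})$.
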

\begin{proof}[Proof]
An application of Lemma \ref{lemma2.4} to $p:=1$ and $\mu:=2$, yields $c_1>0$ such that
 \begin{equation}\label{2.16*}
 \|w_\varepsilon(\cdot,t)\|_{L^2(\Omega)}\leq c_1(1+m).
 \end{equation}
Let
$z_\varepsilon(x,t):=-\ln\frac{v_\varepsilon(x,t)}{\|v_0\|_{L^\infty(\Omega)}}.$
Then the second equation of \eqref{2.5}  becomes
\begin{equation}\label{2.23}
         z_{\varepsilon t}=\Delta z_\varepsilon-|\nabla z_\varepsilon|^2+w_\varepsilon,\\
    \end{equation}
      with the initial value $z_0(x)=-\ln\frac{v_0(x)}{\|v_0\|_{L^\infty(\Omega)}}$.
	According to the variation-of-constants formula to \eqref{2.23}, we arrive at
	\begin{equation*}
		\begin{split}
			z_\varepsilon(\cdot,t)&=e^{t\Delta}z_0(\cdot)-\int_{0}^{t}e^{(t-s)\Delta}|\nabla z_\varepsilon(\cdot,s)|^{2}ds+\int_{0}^{t}e^{(t-s)\Delta}w_\varepsilon(\cdot,s)ds\\
&\leq e^{t\Delta}z_0(\cdot)+\int_{0}^{t}e^{(t-s)\Delta}w_\varepsilon(\cdot,s)ds.
		\end{split}
	\end{equation*}
 Hence from the nonnegativity of $z_\varepsilon(x,t)$, \eqref{2.16*} and \eqref{2.9}, it follows that for all $t\in(0,T)\cap(0,T_{\max,\varepsilon})$,
	\begin{align}\label{2.24}
		&\|z_\varepsilon(\cdot,t)\|_{L^{\infty}(\Omega)}\nonumber\\
\leq
&\|e^{t\Delta}z_{0}\|_{L^{\infty}(\Omega)}+\int_{0}^{t}\|e^{(t-s)\Delta}
w_\varepsilon(\cdot,s)\|_{L^{\infty}(\Omega)}ds\nonumber\\
			\leq
&\|z_{0}\|_{L^{\infty}(\Omega)}+\int_{0}^{t}\|e^{(t-s)\Delta}(w_\varepsilon-\overline{w_\varepsilon})(\cdot,s)
\|_{L^{\infty}(\Omega)}ds+\int_{0}^{t}\|e^{(t-s)\Delta}
\overline{w_\varepsilon}(s)\|_{L^{\infty}(\Omega)}ds\\
			\leq
&
\|z_{0}\|_{L^{\infty}(\Omega)}+c_2\int_{0}^{t}(1+(t-s)^{-\frac{3}{4}})e^{-\lambda_{1}(t-s)}\|(w_\varepsilon-\overline{w_\varepsilon})(\cdot,s)\|_{L^{2}(\Omega)}ds+ \int_{0}^{t}\overline{w_\varepsilon}(s)ds\nonumber\\
\leq&\|z_{0}\|_{L^{\infty}(\Omega)}+c_3\int_{0}^{\infty}(1+\sigma^{-\frac{3}{4}})e^{-\lambda_{1}\sigma}d\sigma+\frac 1{|\Omega|}\max\bigg\{m,\int_{\Omega}w_0\bigg\}t,\nonumber
	\end{align}
where $\overline{w_\varepsilon}(t):=\frac{1}{|\Omega|}\int_{\Omega}w_\varepsilon(x,t)dx$, for constants $c_2>0$ and $c_3>0$.
Therefore we  get
\begin{equation*}
			z_\varepsilon(x,t)\leq \varsigma(1+t)
\end{equation*}
with $\varsigma:=\max\{\|z_{0}\|_{L^{\infty}(\Omega)} + c_3 \int_{0}^{\infty}(1+\sigma^{-\frac{3}{4}})e^{-\lambda_{1}\sigma}d\sigma,
 \frac 1{|\Omega|}\max\{m,\int_{\Omega}w_0\}\}$. In conjunction with the definition of $z_\varepsilon$, we have
 \begin{equation}\label{2.19*}
 v_\varepsilon(x,t)\geq \|v_0\|_{L^\infty(\Omega)}e^{-\varsigma(1+t)},
 \end{equation}
and thus arrive at \eqref{2.22} immediately.
	\end{proof}

Similar as in Lemma \ref{lemma2.4},  the estimate of $\|v_\varepsilon(\cdot,t)\|_{W^{1,\theta}(\Omega)}$ can be established by employing the Neumann heat semigroup estimates once more, and accordingly the proof thereof is omitted herein.
\begin{lemma}\label{lemma2.6}
Suppose  $(u_\varepsilon,v_\varepsilon,w_\varepsilon)$  be the solution of \eqref{2.5}. Then for 
\begin{equation*}
    \begin{aligned}
        \theta\in\left\{
    \begin{array}{l}
        {\left({1,\frac{3p}{(3-p)_+}}\right)},\quad p\leq 3, \\
         {(1,\infty]},\quad\quad p>3,
    \end{array}\right.
\end{aligned}
\end{equation*}
there exists constant $C(p,\theta)>0$ such that  for all $t\in(0,T_{\max,\varepsilon})$
\begin{equation*}
    \|v_\varepsilon(\cdot,t)\|_{W^{1,\theta}(\Omega)}\leq C(p,\theta)(1+\sup\limits_{0<t<T_{\max,\varepsilon}}\|w_\varepsilon(\cdot,t)\|_{L^p(\Omega)}).
\end{equation*}\end{lemma}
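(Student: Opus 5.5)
We will follow the proof of Lemma \ref{lemma2.4}, now applied to the second equation of \eqref{2.5}. The variation-of-constants formula gives
\begin{equation*}
v_\varepsilon(\cdot,t)=e^{t(\Delta-1)}v_0+\int_0^t e^{(t-s)(\Delta-1)}\big(v_\varepsilon-v_\varepsilon w_\varepsilon\big)(\cdot,s)\,ds
\qquad\mbox{for all}~~t\in(0,T_{\max,\varepsilon}).
\end{equation*}
We have rewritten $\Delta v_\varepsilon-v_\varepsilon w_\varepsilon$ as $(\Delta-1)v_\varepsilon+(v_\varepsilon-v_\varepsilon w_\varepsilon)$. This produces the factor $e^{-(t-s)}$, so the time integrals converge uniformly in $t$ even though the forcing has nonzero mean.

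We then take the $W^{1,\theta}$ norm term by term.

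\textbf{Homogeneous part.} Since $v_0\in W^{1,\infty}(\Omega)$, the Neumann semigroup estimates of \cite{Winkler(2010)} give $\|e^{t(\Delta-1)}v_0\|_{W^{1,\theta}(\Omega)}\le c_1\|v_0\|_{W^{1,\infty}(\Omega)}$.

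\textbf{Forcing bound.} By \eqref{2.7}, $\|v_\varepsilon(\cdot,s)\|_{L^\infty(\Omega)}\le\|v_0\|_{L^\infty(\Omega)}$. Hence
\begin{equation*}
\|(v_\varepsilon-v_\varepsilon w_\varepsilon)(\cdot,s)\|_{L^p(\Omega)}\le \|v_0\|_{L^\infty(\Omega)}\Big(|\Omega|^{\frac1p}+\sup_{0<s<T_{\max,\varepsilon}}\|w_\varepsilon(\cdot,s)\|_{L^p(\Omega)}\Big).
\end{equation*}

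\textbf{Smoothing estimate.} We use the gradient smoothing estimate of \cite{Winkler(2010)},
\begin{equation*}
\|\nabla e^{\sigma\Delta}\phi\|_{L^\theta(\Omega)}\le c_2\big(1+\sigma^{-\frac12-\frac32(\frac1p-\frac1\theta)}\big)e^{-\lambda_1\sigma}\|\phi\|_{L^p(\Omega)},
\end{equation*}
which needs $\theta\ge p$. The $L^\theta$ part of the norm is handled by the analogous $L^p$–$L^\theta$ estimate without gradient. Its exponent $\frac32(\frac1p-\frac1\theta)<1$ is weaker, so it causes no difficulty. For $\theta<p$ we first embed $L^p\hookrightarrow L^\theta$, since $\Omega$ is bounded.

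\textbf{Integrability in time.} Combined with the extra factor $e^{-(t-s)}$, the kernel $\big(1+(t-s)^{-\gamma}\big)e^{-(t-s)}$ with $\gamma=\frac12+\frac32(\frac1p-\frac1\theta)$ is integrable over $(0,\infty)$ precisely when $\gamma<1$. This condition is equivalent to $\frac1\theta>\frac1p-\frac13$.
\begin{itemize}
\item For $p\le3$, it reads $\theta<\frac{3p}{(3-p)_+}$.
\item For $p>3$, the right-hand side $\frac1p-\frac13$ is negative, so every $\theta\in(1,\infty]$ is admissible.
\end{itemize}
This is exactly the range in the statement. The main point to verify is this exponent bookkeeping. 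The constant $\int_0^\infty(1+\sigma^{-\gamma})e^{-\sigma}d\sigma$ is then finite and independent of $t$ and $\varepsilon$.

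Collecting the three contributions yields the claimed bound with a constant $C(p,\theta)$ that also depends on $\|v_0\|_{W^{1,\infty}(\Omega)}$, hence on $K$ via \eqref{1.6}. The endpoint $\theta=\infty$ for $p>3$ is covered by the same $L^p$–$W^{1,\infty}$ smoothing estimate, since then $\gamma=\frac12+\frac3{2p}<1$.
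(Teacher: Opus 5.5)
Your proposal is correct and matches the argument the paper intends: the paper omits this proof, saying only that it follows from Neumann heat semigroup estimates as in Lemma \ref{lemma2.4}, and your steps supply exactly those details (the variation-of-constants representation with the shifted semigroup $e^{t(\Delta-1)}$, the $L^p$--$W^{1,\theta}$ smoothing bound, and the exponent condition $\frac12+\frac32(\frac1p-\frac1\theta)<1$). As a minor simplification, the $L^\theta$ part of the norm follows directly from \eqref{2.7}, so only the gradient actually needs the smoothing estimate.
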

The core of our analysis consists of appropriately controlling the taxis-driven contributions through the diffusion-induced dissipation represented by the integral
$\int_{\Omega} u_\varepsilon^{k} v_\varepsilon |\nabla u_\varepsilon|^2$. Notably, due to the decay of the weight $v_\varepsilon$,
this
 requires us to explore the evolution of the functional of the form  $\int_\Omega v_\varepsilon^{-q+1}|\nabla v_\varepsilon|^q$,
and
  adequately take advantage of corresponding singularly weighted dissipation rates arising therein.
\begin{lemma}\label{lemma2.7}
    Let $\Omega\subset\mathbb{R}^3$ be a smoothly bounded domain and $q\geq2$. Then for all $t\in(0,T_{\max,\varepsilon})$, there exist $\gamma(q)>0$ and $C(q)>0$ such that
    \begin{equation}\label{2.15}
    \begin{split}
        &\frac{d}{dt}\int_{\Omega}v_\varepsilon^{-q+1}|\nabla v_\varepsilon|^q+\gamma(q)\int_{\Omega}v_\varepsilon^{-q-1}|\nabla v_\varepsilon|^{q+2}\leq  C(q)\bigg(\int_{\Omega}w_\varepsilon^{\frac{q+2}{2}}v_\varepsilon+\int_\Omega v_\varepsilon\bigg)
    \end{split}
    \end{equation}
    where $\gamma(q):=\frac{q}{4(q+\sqrt{3})^2}$.
\end{lemma}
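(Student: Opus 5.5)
We differentiate the functional directly and try to reduce everything to a pointwise inequality for the Hessian of $v_\varepsilon$ plus boundary terms. Write $v=v_\varepsilon$, $w=w_\varepsilon$. Using $v_t=\Delta v-vw$, we obtain
\begin{align*}
\frac{d}{dt}\int_\Omega v^{1-q}|\nabla v|^q
&=q\int_\Omega v^{1-q}|\nabla v|^{q-2}\nabla v\cdot\nabla(\Delta v-vw)\\
&\quad+(q-1)\int_\Omega v^{-q}|\nabla v|^q(vw-\Delta v).
\end{align*}

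\textbf{Diffusion terms.} For these we use the identity $\nabla v\cdot\nabla\Delta v=\frac12\Delta|\nabla v|^2-|D^2v|^2$ and integrate by parts. The result has three pieces:
\begin{itemize}
\item a negative quadratic form in $D^2v$, $\nabla v\otimes\nabla v/v$ and $\nabla|\nabla v|^2$;
\item a boundary integral $\frac q2\int_{\partial\Omega}v^{1-q}|\nabla v|^{q-2}\partial_\nu|\nabla v|^2$;
\item lower order cross terms.
\end{itemize}
The cleanest route is the substitution $v=\varphi^{2}$, or equivalently work with $z=-\ln v$ as in Lemma \ref{lemma2.5}, so that $v^{1-q}|\nabla v|^q=v|\nabla z|^q$. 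Then the dissipation becomes $\int v|\nabla z|^{q-2}|D^2 z|^2$-type terms together with a favourable term $\int v|\nabla z|^{q+2}$.

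\textbf{Quadratic form.} The constant $\gamma(q)=\frac{q}{4(q+\sqrt3)^2}$ suggests the following algebraic step. We use the pointwise bound $|\Delta z|\le\sqrt3|D^2z|$ in three dimensions. Combined with Young's inequality, it should show that the quadratic form in $(D^2z,\nabla z\otimes\nabla z)$ is bounded above by
\[
-\frac{q}{4(q+\sqrt3)^2}\,v|\nabla z|^{q+2}
\]
plus a nonpositive multiple of $|D^2z|^2$. This is the step I expect to be the main obstacle. One has to optimize the Young weights so that the cross term $v|\nabla z|^{q}\Delta z$ is absorbed by both the Hessian dissipation and the $|\nabla z|^{q+2}$ term, and track the constant exactly. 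I would first try completing the square in the scalar quantity $\Delta z$ versus $|\nabla z|^2$ after applying $|\Delta z|^2\le3|D^2z|^2$.

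\textbf{Boundary term.} Here we use the standard bound $\partial_\nu|\nabla v|^2\le c_\Omega|\nabla v|^2$ on $\partial\Omega$, which holds for smooth domains. The resulting boundary integral is controlled via a trace inequality applied to $\int_{\partial\Omega}v^{1-q}|\nabla v|^q$, with $\nabla v$ replaced by $v^{1/2}|\nabla z|^{q/2}$. Part of it is absorbed into the dissipation, leaving a multiple of $\int v|\nabla z|^{q}$. By Young's inequality this is at most $\frac\gamma4\int v|\nabla z|^{q+2}+C\int v$.

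\textbf{Reaction terms.} The $w$-contributions are
\[
-q\int v^{1-q}|\nabla v|^{q-2}\nabla v\cdot\nabla(vw)+(q-1)\int v^{1-q}|\nabla v|^q w.
\]
We expand $\nabla(vw)=w\nabla v+v\nabla w$. The $w\nabla v$ parts combine into $-\int v^{1-q}|\nabla v|^q w\le0$ and can be dropped. The remaining term $-q\int v^{2-q}|\nabla v|^{q-2}\nabla v\cdot\nabla w$ is integrated by parts to move the derivative off $w$. This produces terms like $\int w\,v|\nabla z|^{q-2}|D^2z|$ and $\int w\,v|\nabla z|^q$.

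We estimate these by Young's inequality:
\[
w\,v|\nabla z|^q\le\eta\, v|\nabla z|^{q+2}+C_\eta\, w^{\frac{q+2}{2}}v,
\]
\[
w\,v|\nabla z|^{q-2}|D^2z|\le\eta\, v|\nabla z|^{q-2}|D^2z|^2+C\,w^2v|\nabla z|^{q-2}.
\]
The last product is at most $\eta\, v|\nabla z|^{q+2}+C\,w^{\frac{q+2}{2}}v$, by Young's inequality with exponents $\frac{q+2}{q-2}$ and $\frac{q+2}{4}$.

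Choosing $\eta$ small and collecting all pieces yields \eqref{2.15}, with $C(q)$ absorbing $c_\Omega$ and the Young constants.
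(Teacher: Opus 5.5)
Your treatment of the boundary integral (trace inequality for $v^{1/2}|\nabla z|^{q/2}$, then Young) is sound. So is your treatment of the $w$-terms: drop $-\int_\Omega v^{1-q}|\nabla v|^q w$, integrate the $\nabla w$-term by parts, then use Young with exponents $\frac{q+2}{q-2}$ and $\frac{q+2}{4}$. Both parallel the paper. The gap is the step you flagged, and it cannot be fixed by tuning Young weights. Your premise that the diffusion part, written in $z=-\ln v$, contains a favourable term $-\int_\Omega v|\nabla z|^{q+2}$ is false. Carrying out the integrations by parts gives the paper's rewriting \eqref{2.17}, since $v^{3-q}|\nabla v|^{q-2}|D^2\ln v|^2=v|\nabla z|^{q-2}|D^2z|^2$. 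The diffusion part equals
\begin{align*}
&-q\int_\Omega v|\nabla z|^{q-2}|D^2z|^2-\frac{q(q-2)}{4}\int_\Omega v|\nabla z|^{q-4}\big|\nabla|\nabla z|^2\big|^2\\
&+\frac q2\int_{\partial\Omega}v|\nabla z|^{q-2}\partial_\nu|\nabla z|^2 .
\end{align*}
The term $-q(q-1)\int_\Omega v^{-q-1}|\nabla v|^{q+2}$ seen in the original variables is entirely absorbed into these squares. The interior integrand vanishes wherever $D^2z=0$ but $\nabla z\neq0$ (e.g.\ $v=e^{-a\cdot x}$ locally). So no pointwise completion of squares, in $\Delta z$ versus $|\nabla z|^2$ or otherwise, can bound it by $-\gamma(q)v|\nabla z|^{q+2}$. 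Without a dissipation term $\int_\Omega v|\nabla z|^{q+2}$, neither your boundary absorption nor your Young bounds for the $w$-terms close. In the $z$-variables, $\int_\Omega wv|\nabla z|^q$ even appears with the positive coefficient $q-1$.

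What is missing is a genuinely integral inequality that exploits the weight. Since $v\nabla z=-\nabla v$ and $\partial_\nu z=0$ on $\partial\Omega$,
\begin{align*}
\int_\Omega v|\nabla z|^{q+2}&=-\int_\Omega|\nabla z|^q\nabla z\cdot\nabla v
=\int_\Omega v\Big(|\nabla z|^q\Delta z+q|\nabla z|^{q-2}\nabla z\cdot(D^2z\,\nabla z)\Big)\\
&\le(q+\sqrt3)\int_\Omega v|\nabla z|^q|D^2z| .
\end{align*}
Cauchy--Schwarz then yields $\int_\Omega v|\nabla z|^{q+2}\le(q+\sqrt3)^2\int_\Omega v|\nabla z|^{q-2}|D^2z|^2$. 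This is the inequality from \cite{WDCDSB} behind \eqref{2.20}. Spending half of $q\int_\Omega v|\nabla z|^{q-2}|D^2z|^2$ on it gives $c_2=\frac{q}{2(q+\sqrt3)^2}$. Then $\gamma(q)=\frac{c_2}{2}$ is what remains after the boundary and $w$-contributions are absorbed. Equivalently, you could add the vanishing integral $\lambda\int_\Omega\nabla\cdot(v|\nabla z|^q\nabla z)$, with $\lambda=\frac{q}{(q+\sqrt3)^2}$, to the diffusion part. Only after that does the pointwise Young argument you had in mind go through. Note that the factor $q$ in $q+\sqrt3$ comes from the term $q|\nabla z|^{q-2}\nabla z\cdot(D^2z\,\nabla z)$ of this divergence. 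A square completed in $\Delta z$ alone would never produce it.
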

\begin{proof}
    According to the second equation in \eqref{2.5} as well as the identities $\nabla v_\varepsilon\cdot\nabla\Delta v_\varepsilon=\frac{1}{2}\Delta|\nabla v_\varepsilon|^2-|D^2v_\varepsilon|^2$ and $\nabla|\nabla v_\varepsilon|^2=2D^2v_\varepsilon\cdot\nabla v_\varepsilon$, several integrations by the parts show that
    \begin{align}\label{2.16}
           & \frac{d}{dt}\int_\Omega v_\varepsilon^{-q+1}|\nabla v_\varepsilon|^q\nonumber\\
            &=q\int_\Omega v_\varepsilon^{-q+1}|\nabla v_\varepsilon|^{q-2}\nabla v_\varepsilon\cdot\nabla\{\Delta v_\varepsilon-v_\varepsilon w_\varepsilon\}-(q-1)\int_\Omega v_\varepsilon^{-q}|\nabla v_\varepsilon|^q\cdot\{\Delta v_\varepsilon-v_\varepsilon w_\varepsilon\}\nonumber\\
            &=-\frac{q(q-2)}{4}\int_\Omega v_\varepsilon^{-q+1}|\nabla v_\varepsilon|^{q-4}|\nabla|\nabla v_\varepsilon|^2|^2+q(q-1)\int_\Omega v_\varepsilon^{-q}|\nabla v_\varepsilon|^{q-2}\nabla v_\varepsilon\cdot\nabla|\nabla v_\varepsilon|^2\nonumber\\
            &-q\int_\Omega v_\varepsilon^{-q+1}|\nabla v_\varepsilon|^{q-2}|D^2 v_\varepsilon|^2-q(q-1)\int_\Omega v_\varepsilon^{-q-1}|\nabla v_\varepsilon|^{q+2}\\
            &+\frac{q}{2}\int_{\partial\Omega}v_\varepsilon^{-q+1}|\nabla v_\varepsilon|^{q-2}\cdot\frac{\partial|\nabla v_\varepsilon|^2}{\partial\nu}+q(q-2)\int_\Omega w_\varepsilon v_\varepsilon^{-q+2}|\nabla v_\varepsilon|^{q-4}\nabla v_\varepsilon\cdot(D^2 v_\varepsilon\cdot\nabla v_\varepsilon)\nonumber\\
            &+q\int_\Omega w_\varepsilon v_\varepsilon^{-q+2}|\nabla v_\varepsilon|^{q-2}\Delta v_\varepsilon-(q-1)^2\int_\Omega w_\varepsilon v_\varepsilon^{-q+1}|\nabla v_\varepsilon|^q\nonumber
    \end{align}
    for all $t\in(0,T_{\max,\varepsilon})$. Due to the fact that
    \begin{equation*}
        |D^2\ln\varphi|^2=\frac{1}{\varphi^2}|D^2\varphi|^2-\frac{2}{\varphi^3}\nabla\varphi\cdot(D^2\varphi\cdot\nabla\varphi)+\frac{1}{\varphi^4}|\nabla\varphi|^4~~\mbox{for all}~~\varphi\in C^2(\Omega),
    \end{equation*}
we derive that
    \begin{equation*}
        |D^2v_\varepsilon|^2=v_\varepsilon^2|D^2\ln v_\varepsilon|^2+\frac{1}{v_\varepsilon}\nabla v_\varepsilon\cdot\nabla|\nabla v_\varepsilon|^2-\frac{1}{v_\varepsilon^2}|\nabla v_\varepsilon|^4,
    \end{equation*}
     and then we can rewrite
    \begin{align}\label{2.17}
            &-\frac{q(q-2)}{4}\int_\Omega v_\varepsilon^{-q+1}|\nabla v_\varepsilon|^{q-4}|\nabla|\nabla v_\varepsilon|^2|^2+q(q-1)\int_\Omega v_\varepsilon^{-q}|\nabla v_\varepsilon|^{q-2}\nabla v_\varepsilon\cdot\nabla|\nabla v_\varepsilon|^2\nonumber\\
            &-q\int_\Omega v_\varepsilon^{-q+1}|\nabla v_\varepsilon|^{q-2}|D^2 v_\varepsilon|^2-q(q-1)\int_\Omega v_\varepsilon^{-q-1}|\nabla v_\varepsilon|^{q+2}\nonumber\\
            &=-q\int_\Omega v_\varepsilon^{-q+3}|\nabla v_\varepsilon|^{q-2}|D^2\ln v_\varepsilon|^2-\frac{q(q-2)}{4}\int_\Omega v_\varepsilon^{-q+1}|\nabla v_\varepsilon|^{q-4}|\nabla|\nabla v_\varepsilon|^2|^2\\
            &+q(q-2)\int_\Omega v_\varepsilon^{-q}|\nabla v_\varepsilon|^{q-2}\nabla v_\varepsilon\cdot\nabla|\nabla v_\varepsilon|^2-q(q-2)\int_\Omega v_\varepsilon^{-q-1}|\nabla v_\varepsilon|^{q+2}\nonumber\\
            &=-q\int_\Omega v_\varepsilon^{-q+3}|\nabla v_\varepsilon|^{q-2}|D^2\ln v_\varepsilon|^2-\frac{q(q-2)}{4}\int_\Omega v_\varepsilon^{-q+1}|\nabla v_\varepsilon|^{q-4}\bigg|\nabla|\nabla v_\varepsilon|^2-\frac{2}{v_\varepsilon}|\nabla v_\varepsilon|^2\nabla v_\varepsilon\bigg|^2,\nonumber
    \end{align}
    while estimating $|\Delta v_\varepsilon|\leq\sqrt{3}|D^2v_\varepsilon|$, we obtain that
    \begin{equation}\label{2.18}
    \begin{split}
        &q(q-2)\int_\Omega w_\varepsilon v_\varepsilon^{-q+2}|\nabla v_\varepsilon|^{q-4}\nabla v_\varepsilon\cdot(D^2 v_\varepsilon\cdot\nabla v_\varepsilon)+q\int_\Omega w_\varepsilon v_\varepsilon^{-q+2}|\nabla v_\varepsilon|^{q-2}\Delta v_\varepsilon\\
        &\leq q(q-2)\int_\Omega w_\varepsilon v_\varepsilon^{-q+2}|\nabla v_\varepsilon|^{q-2}|D^2v_\varepsilon|+q\sqrt{3}\int_\Omega w_\varepsilon v_\varepsilon^{-q+2}|\nabla v_\varepsilon|^{q-2}|D^2v_\varepsilon|
    \end{split}
    \end{equation}
    for all $t\in(0,T_{\max,\varepsilon})$.  Let $c_1:=\frac{q}{2(q+\sqrt{3}+1)^2}$ and $c_2:=\frac{q}{2(q+\sqrt{3})^2}$, there exists $c_3>0$ fulfilling
    \begin{equation}\label{2.19}
        \begin{split}
            &\frac{q}{2}\int_{\partial\Omega}v_\varepsilon^{-q+1}|\nabla v_\varepsilon|^{q-2}\cdot\frac{\partial|\nabla v_\varepsilon|^2}{\partial\nu}\\
            &\leq\frac{c_1}{2}\int_\Omega v_\varepsilon^{-q+1}|\nabla v_\varepsilon|^{q-2}|D^2 v_\varepsilon|^2+\frac{c_2}{2}\int_\Omega v_\varepsilon^{-q-1}|\nabla v_\varepsilon|^{q+2}+c_3\int_\Omega v_\varepsilon.
        \end{split}
    \end{equation}
    Furthermore, by two well-known inequalities (Lemma 3.4 of \cite{WDCDSB}), we have
    \begin{equation}\label{2.20}
        q\int_\Omega v_\varepsilon^{-q+3}|\nabla v_\varepsilon|^{q-2}|D^2\ln v_\varepsilon|^2\geq c_1\int_\Omega v_\varepsilon^{-q+1}|\nabla v_\varepsilon|^{q-2}|D^2 v_\varepsilon|^2+c_2\int_\Omega v_\varepsilon^{-q-1}|\nabla v_\varepsilon|^{q+2}.
    \end{equation}
    Substituting the results from \eqref{2.17}--\eqref{2.20} into \eqref{2.16}, we obtain that
    \begin{align}\label{2.21}
         &\frac{d}{dt}\int_\Omega v_\varepsilon^{-q+1}|\nabla v_\varepsilon|^q+c_1\int_\Omega v_\varepsilon^{-q+1}|\nabla v_\varepsilon|^{q-2}|D^2 v_\varepsilon|^2+c_2\int_\Omega v_\varepsilon^{-q-1}|\nabla v_\varepsilon|^{q+2}\\
         &\leq c_4\int_\Omega w_\varepsilon v_\varepsilon^{-q+2}|\nabla v_\varepsilon|^{q-2}|D^2 v_\varepsilon|+\frac{c_1}{2}\int_\Omega v_\varepsilon^{-q+1}|\nabla v_\varepsilon|^{q-2}|D^2 v_\varepsilon|^2+\frac{c_2}{2}\int_\Omega v_\varepsilon^{-q-1}|\nabla v_\varepsilon|^{q+2}+c_3\int_\Omega v_\varepsilon\nonumber
    \end{align}
    with $c_4:=q(q+\sqrt{3}-2)$. Therefore, an application of Young's inequality shows that for all $t\in(0,T_{\max,\varepsilon})$,
    \begin{equation*}
        \begin{split}
            &c_4\int_\Omega w_\varepsilon v_\varepsilon^{-q+2}|\nabla v_\varepsilon|^{q-2}|D^2v_\varepsilon|\\
            &\leq\frac{c_1}{2}\int_\Omega v_\varepsilon^{-q+1}|\nabla v_\varepsilon|^{q-2}|D^2v_\varepsilon|^2+\frac{c_5^2}{2c_1}\int_\Omega w_\varepsilon^2v_\varepsilon^{-q+3}|\nabla v_\varepsilon|^{q-2}\\
            &=\frac{c_1}{2}\int_\Omega v_\varepsilon^{-q+1}|\nabla v_\varepsilon|^{q-2}|D^2v_\varepsilon|^2
            +\int_\Omega\bigg\{\frac{c_2}{2}v_\varepsilon^{-q-1}|\nabla v_\varepsilon|^{q+2}\bigg\}^{\frac{q-2}{q+2}}\cdot
            \bigg\{\bigg(\frac{2}{c_2}\bigg)^{\frac{q-2}{q+2}}\cdot\frac{c_4^2}{2c_1}w_\varepsilon^2v_\varepsilon^{\frac{4}{q+2}}\bigg\}\\
            &\leq\frac{c_1}{2}\int_\Omega v_\varepsilon^{-q+1}|\nabla v_\varepsilon|^{q-2}|D^2v_\varepsilon|^2+\frac{c_2}{2}\int_\Omega v_\varepsilon^{-q-1}|\nabla v_\varepsilon|^{q+2}+\bigg(\frac{2}{c_2}\bigg)^{\frac{q-2}{4}}
            \cdot\bigg(\frac{c_4^2}{2c_1}\bigg)^{\frac{q+2}{4}}\int_{\Omega}w_\varepsilon^{\frac{q+2}{2}}v_\varepsilon.
        \end{split}
    \end{equation*}
     Thus, due to \eqref{2.21}, we obtain that
    \begin{equation*}
        \begin{split}
            &\frac{d}{dt}\int_\Omega v_\varepsilon^{-q+1}|\nabla v_\varepsilon|^q+\gamma(q)\int_\Omega v_\varepsilon^{-q-1}|\nabla v_\varepsilon|^{q+2}\leq C(q)\bigg(\int_{\Omega}w_\varepsilon^{\frac{q+2}{2}}v_\varepsilon+\int_\Omega v_\varepsilon\bigg),
        \end{split}
    \end{equation*}
    where $\gamma(q):=\frac{c_2}{2}=\frac{q}{4(q+\sqrt{3})^2}$. The proof of this lemma is complete.
\end{proof}
In view of the results of Lemma \ref{lemma2.5} on the boundedness of $\frac{1}{v_\varepsilon(\cdot,t)}$, we can derive the local boundedness of $\|u_\varepsilon(\cdot,t)\|_{L^p(\Omega)}$.
\begin{lemma}\label{lemma2.8}
    For all $p\geq2$ and each $T>0$, there exists $C(p,T)>0$ such that
    \begin{equation}\label{2.25}
        \int_\Omega u_\varepsilon^p(\cdot,t)\leq C(p,T)~~\mbox{for all}~~ t\in(0,T)\cap(0,T_{\max,\varepsilon}),
    \end{equation}
    and
    \begin{equation}\label{2.26}
    \int_0^{t}\int_\Omega u_\varepsilon^{p-1}v_\varepsilon|\nabla u_\varepsilon|^2\leq C(p,T)~~\mbox{for all}~~ t\in(0,T)\cap(0,T_{\max,\varepsilon}).
    \end{equation}
\end{lemma}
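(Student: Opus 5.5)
We test the first equation of \eqref{2.5} against $u_\varepsilon^{p-1}$. Integrating by parts and using the no-flux boundary conditions gives
\begin{align*}
\frac1p\frac{d}{dt}\int_\Omega u_\varepsilon^p+(p-1)\int_\Omega u_\varepsilon^{p-1}v_\varepsilon|\nabla u_\varepsilon|^2
&=(p-1)\int_\Omega u_\varepsilon^{p}v_\varepsilon\nabla u_\varepsilon\cdot\nabla v_\varepsilon+\ell\int_\Omega u_\varepsilon^{p-1}v_\varepsilon w_\varepsilon .
\end{align*}
The taxis term is handled by Young's inequality:
$$(p-1)\int_\Omega u_\varepsilon^{p}v_\varepsilon\nabla u_\varepsilon\cdot\nabla v_\varepsilon\le \frac{p-1}{2}\int_\Omega u_\varepsilon^{p-1}v_\varepsilon|\nabla u_\varepsilon|^2+\frac{p-1}{2}\int_\Omega u_\varepsilon^{p+1}v_\varepsilon|\nabla v_\varepsilon|^2 .$$
Half of the dissipation remains. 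What is left is to control $\int_\Omega u_\varepsilon^{p+1}v_\varepsilon|\nabla v_\varepsilon|^2$ by $\int_\Omega u_\varepsilon^p$ and time-integrable quantities, so that a Gronwall argument on $(0,T)$ applies.

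On a finite interval we use Lemma \ref{lemma2.5}, which gives $v_\varepsilon\ge c(T)>0$, together with \eqref{2.7}, which gives $v_\varepsilon\le \|v_0\|_{L^\infty}$. Hence on $(0,T)$ the weight $v_\varepsilon$ is comparable to a constant, and the dissipation controls $\int_\Omega|\nabla u_\varepsilon^{\frac{p+1}2}|^2$ up to the factor $c(T)$.

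Next we bound $\nabla v_\varepsilon$ in a high $L^r$ norm via Lemma \ref{lemma2.7}. Integrating \eqref{2.15} in time bounds $\int_0^T\int_\Omega v_\varepsilon^{-q-1}|\nabla v_\varepsilon|^{q+2}$ by $C\int_0^T\int_\Omega w_\varepsilon^{\frac{q+2}2}v_\varepsilon$ plus data. The term $\int_0^T\int_\Omega w_\varepsilon^{\frac{q+2}2}v_\varepsilon$ involves $w_\varepsilon$, which by Lemma \ref{lemma2.4} is controlled by $\sup_s\|u_\varepsilon\|_{L^{p}}$ in $L^\mu$ with $\mu<\frac{3p}{(3-2p)_+}$. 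Since $p\ge2>\frac32$, this allows every finite $\mu$, and even $L^\infty$ via smoothing.

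This suggests closing the argument with a supremum over $(0,T)$. Set $M(T):=\sup_{t<T}\|u_\varepsilon(\cdot,t)\|_{L^p}$. By Lemmas \ref{lemma2.4} and \ref{lemma2.6}, $\|\nabla v_\varepsilon\|_{L^\infty}\le C(1+M)$. We then estimate
$$\int_\Omega u_\varepsilon^{p+1}v_\varepsilon|\nabla v_\varepsilon|^2\le C(1+M)^2\int_\Omega u_\varepsilon^{p+1},$$
and apply Gagliardo--Nirenberg in three dimensions:
$$\int_\Omega u_\varepsilon^{p+1}=\|u_\varepsilon^{\frac{p+1}2}\|_{L^2}^2\le \eta\|\nabla u_\varepsilon^{\frac{p+1}2}\|_{L^2}^2+C_\eta\|u_\varepsilon^{\frac{p+1}2}\|_{L^{2/(p+1)}}^2 ,$$
where the last term is controlled by the mass bound \eqref{2.8}. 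The factor $(1+M)^2$ makes this superlinear in $M$, so closing needs care. The step I expect to be the main obstacle is precisely this superlinear coupling.

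My first attempt at breaking the circularity is to avoid $M$ altogether. I would use $\|\nabla v_\varepsilon\|_{L^{q+2}}$ with $q$ large, obtained from Lemma \ref{lemma2.7} where $w_\varepsilon$ is bounded only through $L^1$ and $L^2$ data as in \eqref{2.16*}. After that I would bootstrap upward in $p$: start from $p$ near $1$, improve the integrability of $w_\varepsilon$, which in turn bounds $\nabla v_\varepsilon$ in a higher space, and iterate through finitely many steps to reach any $p\ge2$.

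At each stage the lower bound on $v_\varepsilon$ on $(0,T)$ lets us Hölder-split $\int_\Omega u_\varepsilon^{p+1}|\nabla v_\varepsilon|^2$ into $\|u_\varepsilon^{\frac{p+1}2}\|_{L^{2r'}}^2\|\nabla v_\varepsilon\|_{L^{2r}}^2$. We then interpolate the first factor between the gradient term and mass, which yields an ODI $y'\le C(t)(y+1)$ with $C\in L^1(0,T)$. Gronwall then gives \eqref{2.25}, and integrating the differential inequality gives \eqref{2.26}.
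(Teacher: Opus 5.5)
\textbf{There is a gap in the step where you break the circularity.} Your overall plan is sound: test with $u_\varepsilon^{p-1}$, apply Young, use $v_\varepsilon\ge C(T)$ on $(0,T)$, H\"older-split the taxis remainder, use Gagliardo--Nirenberg against the mass, apply Gronwall, then upgrade $w_\varepsilon$. But the specific mechanism you propose for bounding $\nabla v_\varepsilon$ does not work.

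Lemma \ref{lemma2.7} cannot give $\nabla v_\varepsilon\in L^{q+2}$ with $q$ large from $L^1$/$L^2$ information on $w_\varepsilon$. Its right-hand side contains $\int_\Omega w_\varepsilon^{\frac{q+2}{2}}v_\varepsilon$, so \eqref{2.16*} only admits $q\le 2$. That yields merely $\sup_{t<T}\|\nabla v_\varepsilon\|_{L^2(\Omega)}$ and $\int_0^T\|\nabla v_\varepsilon\|_{L^4(\Omega)}^4$. Moreover, for any $q$, the $L^{q+2}$ information from Lemma \ref{lemma2.7} is only space--time integrated.

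This is too weak for your split $\int_\Omega u_\varepsilon^{p+1}|\nabla v_\varepsilon|^2\le\|u_\varepsilon^{\frac{p+1}{2}}\|_{L^{2r'}(\Omega)}^2\|\nabla v_\varepsilon\|_{L^{2r}(\Omega)}^2$, for two reasons:
\begin{itemize}
\item Gagliardo--Nirenberg with an $L^2$ gradient in three dimensions needs $2r'<6$, i.e.\ $2r>3$.
\item After Young's inequality, the coefficient you call $C(t)$ is $\|\nabla v_\varepsilon\|_{L^{2r}(\Omega)}^{2/(1-a)}$ with $a$ close to $1$. For $2r=4$ one has $a=\frac{6p+3}{6p+4}$, so the power is $12p+8$.
\end{itemize}
Integrability of $C(t)$ on $(0,T)$ therefore needs an essentially time-uniform bound on $\nabla v_\varepsilon$ in some $L^s$ with $s>3$. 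Your route does not supply this, so the bootstrap from $p$ near $1$ never starts.

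\textbf{The missing observation is that there is no circularity at all.} The mass bound \eqref{2.8} alone suffices:
\begin{itemize}
\item Lemma \ref{lemma2.4} with $p=1$ gives a time-uniform bound on $\|w_\varepsilon\|_{L^\mu(\Omega)}$ for every $\mu<3$, in particular \eqref{2.16*}.
\item Lemma \ref{lemma2.6} then gives $\sup_t\|\nabla v_\varepsilon\|_{L^4(\Omega)}\le C$, indeed in every $L^\theta$ with $\theta<\infty$, with no reference to $\|u_\varepsilon\|_{L^p(\Omega)}$. This is \eqref{2.31} in the paper.
\end{itemize}
Lemma \ref{lemma2.7} could also serve, but with $q\in(3,4)$ rather than $q\le 2$. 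This needs only $w_\varepsilon\in L^{\frac{q+2}{2}}$ with $\frac{q+2}{2}<3$. You would then use the integrated bound on $\sup_{t<T}\int_\Omega v_\varepsilon^{1-q}|\nabla v_\varepsilon|^q$ together with $v_\varepsilon\le\|v_0\|_{L^\infty(\Omega)}$.

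With either input, your estimate closes directly at $p=2$. There, $\ell\int_\Omega v_\varepsilon w_\varepsilon^2$ is controlled by the same $L^2$ bound on $w_\varepsilon$. Then $u_\varepsilon\in L^2$ upgrades $w_\varepsilon$ via Lemma \ref{lemma2.4}, and the same computation covers every $p\ge2$. This is exactly the paper's two-step structure; no iteration from $p$ near $1$ is needed.

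The remaining difference from the paper is cosmetic. The paper applies Gagliardo--Nirenberg to $u_\varepsilon^{\frac{p+1}{2}}v_\varepsilon^{\frac12}$ and uses $v_\varepsilon\ge C(T)$ only through $\|v_\varepsilon^{-1}\|_{L^\infty(\Omega)}$ in an error term. You instead use the lower bound to strip the weight from the dissipation. Both work.
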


\begin{proof}
    Multiplying the first equation in \eqref{2.5} by $u_\varepsilon^{p-1}$ and using Young's inequality, we have
     \begin{align}\label{2.27}
           \frac{1}{p}\frac{d}{dt}\int_\Omega u_\varepsilon^p&=\int_\Omega u_\varepsilon^{p-1}\{\nabla\cdot(u_\varepsilon v_\varepsilon\nabla u_\varepsilon)-\nabla\cdot(u_\varepsilon^2v_\varepsilon\nabla v_\varepsilon)+\ell v_\varepsilon w_\varepsilon\}\nonumber\\
           &=-(p-1)\int_\Omega u_\varepsilon^{p-1}v_\varepsilon|\nabla u_\varepsilon|^2+(p-1)\int_\Omega u_\varepsilon^pv_\varepsilon\nabla u_\varepsilon\cdot\nabla v+\ell\int_\Omega u_\varepsilon^{p-1}v_\varepsilon w_\varepsilon\\
           &\leq-\frac{p-1}{2}\int_\Omega u_\varepsilon^{p-1}v_\varepsilon|\nabla u_\varepsilon|^2+\frac{(p-1)}{2}\int_\Omega u_\varepsilon^{p+1}v_\varepsilon|\nabla v_\varepsilon|^2+\frac{\ell(p-1)}{p}\int_\Omega u_\varepsilon^pv_\varepsilon+\frac{\ell}{p}\int_\Omega v_\varepsilon w_\varepsilon^{p}\nonumber
    \end{align}
    for all $t\in(0,T)\cap(0,T_{\max,\varepsilon})$. Due to the inequality $(a-b)^2\geq\frac{1}{2}a^2-b^2$ for all $a,b\in\mathbb{R}$, we derive that
    \begin{equation}\label{2.30a}
        \begin{split}
            \int_\Omega u_\varepsilon^{p-1}v_\varepsilon|\nabla u_\varepsilon|^2&=\frac{4}{(p+1)^2}\int_\Omega|v_\varepsilon^{\frac{1}{2}}\nabla u_\varepsilon^{\frac{p+1}{2}}|^2
            \\
            &=\frac{4}{(p+1)^2}\int_\Omega|\nabla(u_\varepsilon^{\frac{p+1}{2}}v_\varepsilon^{\frac{1}{2}})-\frac{1}{2}u_\varepsilon^{\frac{p+1}{2}}v_\varepsilon^{-\frac{1}{2}}\nabla v_\varepsilon|^2\\
            &\geq\frac{2}{(p+1)^2}\int_\Omega|\nabla(u_\varepsilon^{\frac{p+1}{2}}v_\varepsilon^{\frac{1}{2}})|^2-\frac{1}{(p+1)^2}\int_\Omega u_\varepsilon^{p+1}v_\varepsilon^{-1}|\nabla v_\varepsilon|^2.
        \end{split}
    \end{equation}
   Combining  \eqref{2.30a} with  \eqref{2.27} yields
      \begin{align}\label{2.31a}
           &\frac{d}{dt}\int_\Omega u_\varepsilon^p
           + \frac{p(p-1)}{4}\int_\Omega u_\varepsilon^{p-1}v_\varepsilon|\nabla u_\varepsilon|^2
           +\frac1{10}\int_\Omega|\nabla(u_\varepsilon^{\frac{p+1}{2}}v_\varepsilon^{\frac{1}{2}})|^2           \\
           \leq & \frac 14 \int_\Omega u_\varepsilon^{p+1}v_\varepsilon^{-1}|\nabla v_\varepsilon|^2 +\frac{p(p-1)}{2}\int_\Omega u_\varepsilon^{p+1}v_\varepsilon|\nabla v_\varepsilon|^2+\ell(p-1)\int_\Omega u_\varepsilon^pv_\varepsilon+\ell\int_\Omega v_\varepsilon w_\varepsilon^{p}\nonumber
    \end{align}
    By H\"{o}lder's inequality, we get
    \begin{equation}\label{2.29}
    \begin{split}
        \int_\Omega u_\varepsilon^{p+1}v_\varepsilon^{-1}|\nabla v_\varepsilon|^2&\leq\bigg\{\int_\Omega u_\varepsilon^{2(p+1)}v_\varepsilon^{2}\bigg\}^{\frac{1}{2}}\cdot\bigg\{\int_\Omega\frac{|\nabla v_\varepsilon|^4}{v_\varepsilon^4}\bigg\}^{\frac{1}{2}}\\
    \end{split}
    \end{equation}
    and
    \begin{equation}\label{2.30}
    \begin{split}
        \int_\Omega u_\varepsilon^{p+1}v_\varepsilon|\nabla v_\varepsilon|^2&\leq\bigg\{\int_\Omega u_\varepsilon^{2(p+1)}v_\varepsilon^2\bigg\}^{\frac{1}{2}}\cdot\bigg\{\int_\Omega|\nabla v_\varepsilon|^4\bigg\}^{\frac{1}{2}}.\\
    \end{split}
    \end{equation}
    In addition, according to Lemma \ref{lemma2.5} and Lemma \ref{lemma2.6},  there exists $c_1=c_1(T)>0$ such that
    \begin{equation}\label{2.31}
        \|\frac{1}{v_\varepsilon}(\cdot,t)\|_{L^\infty(\Omega)} +\|\nabla v_\varepsilon(\cdot,t)\|_{L^4(\Omega)}^4\leq c_1~~\mbox{for all}~~t\in(0,T)\cap(0,T_{\max,\varepsilon}).
    \end{equation}
    Therefore from \eqref{2.31a}-\eqref{2.31},
    it follows that
     \begin{align}\label{2.35a}
           &\frac{d}{dt}\int_\Omega u_\varepsilon^p
           + \frac{p(p-1)}{4}\int_\Omega u_\varepsilon^{p-1}v_\varepsilon|\nabla u_\varepsilon|^2
           +\frac1{10}\int_\Omega|\nabla(u_\varepsilon^{\frac{p+1}{2}}v_\varepsilon^{\frac{1}{2}})|^2           \\
           \leq & c_2(p,T)\bigg\{\int_\Omega u_\varepsilon^{2(p+1)}v_\varepsilon^2\bigg\}^{\frac{1}{2}}
           +\ell(p-1)\int_\Omega u_\varepsilon^pv_\varepsilon+\ell\int_\Omega v_\varepsilon w_\varepsilon^{p}.\nonumber
    \end{align}
     On the other hand, along with \eqref{2.7} and \eqref{2.8}, the Gagliardo-Nirenberg inequality provides $c_3>0$ such that
        \begin{equation}\label{2.32}
    \begin{split}
        \bigg\{\int_\Omega u_\varepsilon^{2(p+1)}v_\varepsilon^2\bigg\}^{\frac1 2}&=\|u_\varepsilon^{\frac{p+1}{2}}v_\varepsilon^{\frac{1}{2}}\|_{L^4(\Omega)}^2\\
        &\leq c_3\|\nabla(u_\varepsilon^{\frac{p+1}{2}}
        v_\varepsilon^{\frac{1}{2}})\|_{L^2(\Omega)}^{2a}
        \|u_\varepsilon^{\frac{p+1}{2}}
        v_\varepsilon^{\frac{1}{2}}\|_{L^{\frac{2}{p+1}}(\Omega)}^{2-2a}
        +c_3\|u_\varepsilon^{\frac{p+1}{2}}v_\varepsilon^{\frac{1}{2}}\|_{L^\frac{2}{p+1}(\Omega)}^2\\
        &\leq c_3\|v_0\|_{L^\infty(\Omega)}^{1-a}m^{(p+1)(1-a)}\|\nabla(u_\varepsilon^{\frac{p+1}{2}}v_\varepsilon^{\frac{1}{2}})\|_{L^2(\Omega)}^{2a}
        +c_3\|v_0\|_{L^\infty(\Omega)}m^{p+1},\\
    \end{split}
    \end{equation}
    where $a:=\frac{6p+3}{6p+4}\in(0,1)$.
    Hence, by Young's inequality, the combination \eqref{2.35a} with \eqref{2.32} leads to
         \begin{align}\label{2.37a}
           &\frac{d}{dt}\int_\Omega u_\varepsilon^p
           + \frac{p(p-1)}{4}\int_\Omega u_\varepsilon^{p-1}v_\varepsilon|\nabla u_\varepsilon|^2
           +\frac1{20}\int_\Omega|\nabla(u_\varepsilon^{\frac{p+1}{2}}v_\varepsilon^{\frac{1}{2}})|^2           \\
           \leq & c_4(p,T)+\ell(p-1)\int_\Omega u_\varepsilon^pv_\varepsilon+\ell\int_\Omega v_\varepsilon w_\varepsilon^{p}\nonumber
    \end{align}
     with some $c_4(p,T)>0$   for all $ t\in(0,T)\cap(0,T_{\max,\varepsilon})
$.
At this position, taking $p=2$ in  \eqref{2.37a} and applying inequality \eqref{2.14} to $\mu:=2, p:=1$, one can find
$c_5(T)>0$ such that
 \begin{equation*}
 \frac{d}{dt}\int_\Omega u_\varepsilon^2+\frac1 {20}\int_\Omega|\nabla(u_\varepsilon^{\frac{3}{2}}v_\varepsilon^{\frac{1}{2}})|^2\leq \ell\|v_0\|_{L^\infty(\Omega)}\int_\Omega u_\varepsilon^2+c_5(T),
 \end{equation*}
  which implies that for some $c_6(T)>0$,
  \begin{equation*}
  \int_\Omega u_\varepsilon^2(\cdot,t)\leq c_6(T)~~\mbox{for all}~~t\in(0,T)\cap(0,T_{\max,\varepsilon}).
  \end{equation*}
  As the application of Lemma \ref{lemma2.4} to $p:=2$, this  leads to
  $\|w(\cdot,t)\|_{L^\infty(\Omega)}\leq c_7(p,T)$ with  constant  $c_7(T)>0$ for all $t\in(0,T)\cap(0,T_{\max,\varepsilon})$.

   Furthermore, it follows from \eqref{2.37a} that for some $c_8(p,T)>0$, we have
  \begin{equation*}
  \frac{d}{dt}\int_\Omega u_\varepsilon^p+\frac{p(p-1)}{4}\int_\Omega u_\varepsilon^{p-1}v_\varepsilon|\nabla u_\varepsilon|^2
           \leq \ell \|v_0\|_{L^\infty(\Omega)}p \int_\Omega u_\varepsilon^p+ c_8(p,T),
  \end{equation*}
  and thereby derive \eqref{2.25} and \eqref{2.26} by the Gronwall inequality.
\end{proof}

\begin{lemma}\label{lemma2.9} For all $T>0$, there exists  $C(T)>0$ such that
\begin{align}\label{2.35}
   \|v_{\varepsilon}(\cdot,t)\|_{W^{1,\infty}(\Omega)}\leq C(T)\qquad for\; all\; t\in (0,T)\cap(0,T_{\max,\varepsilon}).
\end{align}
\end{lemma}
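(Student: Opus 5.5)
The plan is to obtain the $W^{1,\infty}$ bound for $v_\varepsilon$ directly from Lemma \ref{lemma2.6} with some $p>3$, because for $p>3$ that lemma allows $\theta=\infty$. It therefore suffices to bound $\sup_{t\in(0,T)\cap(0,T_{\max,\varepsilon})}\|w_\varepsilon(\cdot,t)\|_{L^p(\Omega)}$ for one fixed $p>3$, with a constant depending only on $T$. We will in fact bound $w_\varepsilon$ in $L^\infty(\Omega)$ on $(0,T)$.

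We proceed in three steps.

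\textbf{Step 1 ($L^2$ control of $u_\varepsilon$).} Lemma \ref{lemma2.8} with $p=2$ gives $\int_\Omega u_\varepsilon^2(\cdot,t)\le C(2,T)$ for $t\in(0,T)\cap(0,T_{\max,\varepsilon})$.

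\textbf{Step 2 ($L^\mu$ control of $w_\varepsilon$).} We apply Lemma \ref{lemma2.4} with $p=2$. Since $(3-2p)_+=0$, the restriction $\mu<\frac{3p}{(3-2p)_+}$ is void, so every finite $\mu$ is admissible. Choosing, say, $\mu=4$ yields
\[
\|w_\varepsilon(\cdot,t)\|_{L^4(\Omega)}\le C\bigl(\|w_0\|_{L^4(\Omega)}+C(2,T)^{1/2}\bigr)
\]
on $(0,T)\cap(0,T_{\max,\varepsilon})$. This is where the main care is needed: Lemma \ref{lemma2.4} is stated with a supremum over $(0,T_{\max,\varepsilon})$, but we only have the $L^2$ bound on $(0,T)$. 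Because the Duhamel formula for $w_\varepsilon(\cdot,t)$ involves $u_\varepsilon(\cdot,s)$ only for $s\le t$, the proof of Lemma \ref{lemma2.4} works verbatim with the supremum taken over $(0,t)$. We will note this explicitly, and the same localization applies to Lemma \ref{lemma2.6}. The proof of Lemma \ref{lemma2.8} already used Lemma \ref{lemma2.4} in exactly this local way and obtained $\|w_\varepsilon\|_{L^\infty}\le c_7(T)$, so that bound can be quoted directly. As a cross-check: $L^2\to L^\infty$ smoothing in three dimensions has kernel singularity $(t-s)^{-3/4}$, which is integrable.

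\textbf{Step 3 (conclusion).} With $p=4>3$ in Lemma \ref{lemma2.6}, in its time-localized form, we get
\[
\|v_\varepsilon(\cdot,t)\|_{W^{1,\infty}(\Omega)}\le C(4,\infty)\bigl(1+\sup_{s<t}\|w_\varepsilon(\cdot,s)\|_{L^4(\Omega)}\bigr)\le C(T).
\]
Concretely, Lemma \ref{lemma2.6} amounts to writing $v_\varepsilon(\cdot,t)=e^{t\Delta}v_0-\int_0^te^{(t-s)\Delta}(v_\varepsilon w_\varepsilon)(\cdot,s)\,ds$ and using $\|\nabla e^{t\Delta}\phi\|_{L^\infty}\le c(1+t^{-\frac12-\frac{3}{2\cdot4}})e^{-\lambda_1t}\|\phi\|_{L^4}$. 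Here the exponent $\frac12+\frac38<1$ is integrable, and $\|v_\varepsilon w_\varepsilon\|_{L^4}\le\|v_0\|_{L^\infty}\|w_\varepsilon\|_{L^4}$. Together with $\|\nabla e^{t\Delta}v_0\|_{L^\infty}\le c\|\nabla v_0\|_{L^\infty}$ and \eqref{2.7}, this gives \eqref{2.35}.
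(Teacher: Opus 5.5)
Your proposal is correct and follows the paper's proof: an $L^2$ bound on $u_\varepsilon$ from Lemma \ref{lemma2.8}, then Lemma \ref{lemma2.4} with $p=2$ to control $w_\varepsilon$, then Lemma \ref{lemma2.6} with some $p>3$ to reach $W^{1,\infty}$. Your version is slightly more careful than the paper's in two places: you note explicitly that the suprema in Lemmas \ref{lemma2.4} and \ref{lemma2.6} only need to run over $(0,t)$, and you use a finite $\mu=4$, which Lemma \ref{lemma2.4} literally permits, instead of claiming an $L^\infty$ bound for $w_\varepsilon$ directly.
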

\begin{proof}
    Based on Lemma \ref{lemma2.8}, we obtain the boundedness of $u_{\varepsilon}$ in $L^{\infty}((0,\min\{T,T_{max,\varepsilon}\});L^2(\Omega))$. As an application of Lemma \ref{lemma2.4} to $p:=2$,  the boundedness of $w_{\varepsilon}$ in $L^{\infty}((0,\min\{T,T_{max,\varepsilon}\});L^\infty(\Omega))$ is achieved
    immediately. This result together with Lemma \ref{lemma2.6}, yields \eqref{2.35} readily.
\end{proof}

At this position, we can  proceed to prove  the $L^\infty$-bounds for $u_{\varepsilon}$.
\begin{lemma}\label{lemma2.10}
For all $T>0$,  there exists
$C(T)>0$
 such that
\begin{align}\label{2.36}
   \|u_{\varepsilon}(\cdot,t)\|_{L^{\infty}(\Omega)}\leq C( T) ~\qquad for\; all\; ~t\in (0,T)\cap(0,T_{max,\varepsilon}).
\end{align}
\end{lemma}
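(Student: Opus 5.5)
We prove the $L^\infty$ bound by a Moser-type iteration on the first equation of \eqref{2.5}, carried out on $(0,T)\cap(0,T_{max,\varepsilon})$, where Lemmas \ref{lemma2.5}, \ref{lemma2.8} and \ref{lemma2.9} make the equation uniformly nondegenerate in $v_\varepsilon$. The available bounds are the following. By Lemma \ref{lemma2.5} we have $v_\varepsilon\geq c_1(T)>0$. By \eqref{2.7} and Lemma \ref{lemma2.9} we have $\|v_\varepsilon\|_{W^{1,\infty}(\Omega)}\le c_2(T)$. By Lemma \ref{lemma2.8} with $p=2$ combined with Lemma \ref{lemma2.4}, we have $\|w_\varepsilon\|_{L^\infty(\Omega)}\le c_3(T)$. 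Hence the only remaining degeneracy in the $u_\varepsilon$-diffusion is the porous-medium factor $u_\varepsilon$ itself.

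For $p\ge2$ we test the first equation with $u_\varepsilon^{p-1}$, as in \eqref{2.27}. Using $v_\varepsilon\ge c_1$ and Young's inequality on the cross term, we obtain
\begin{equation*}
\frac{d}{dt}\int_\Omega u_\varepsilon^p+\frac{p(p-1)c_1}{2}\int_\Omega u_\varepsilon^{p-1}|\nabla u_\varepsilon|^2\le \frac{p(p-1)}{2}\|v_0\|_{L^\infty}c_2^2\int_\Omega u_\varepsilon^{p+1}+\ell p\|v_0\|_{L^\infty}c_3\int_\Omega u_\varepsilon^{p-1}.
\end{equation*}
Note that $u_\varepsilon^{p-1}|\nabla u_\varepsilon|^2=\frac{4}{(p+1)^2}|\nabla u_\varepsilon^{\frac{p+1}{2}}|^2$. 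The bad term $\int_\Omega u_\varepsilon^{p+1}=\|u_\varepsilon^{\frac{p+1}{2}}\|_{L^2}^2$ has the same homogeneity as the dissipation, so we cannot absorb it by scaling alone. Instead we interpolate with the Gagliardo--Nirenberg inequality against a lower norm $\|u_\varepsilon^{\frac{p+1}{2}}\|_{L^{r}}$, where $r$ is chosen so that this norm equals $\|u_\varepsilon\|_{L^{p/2}}^{(p+1)/2}$ or similar. The resulting exponent on the gradient is strictly less than $2$, so Young's inequality gives
\begin{equation*}
\int_\Omega u_\varepsilon^{p+1}\le \eta\int_\Omega|\nabla u_\varepsilon^{\frac{p+1}{2}}|^2+C(\eta)\Big(\int_\Omega u_\varepsilon^{p_0}\Big)^{\theta},
\end{equation*}
where $p_0<p$ is an exponent at which Lemma \ref{lemma2.8} already provides a bound. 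With this we can proceed in either of two ways.

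The first route applies the Alikakos--Moser iteration. We take $p_k=2^k$, track the constants, which grow polynomially in $p_k$, and derive $\sup_{t<T}\int_\Omega u_\varepsilon^{p_k}\le (C p_k^{\,a})^{k}\max\{1,\sup\int_\Omega u_\varepsilon^{p_{k-1}}\}^{2}$. The base case is $\sup_t\int u_\varepsilon^{2}\le C$ from Lemma \ref{lemma2.8}, together with $\|u_0+\varepsilon\|_{L^\infty}\le K+1$ to control the initial norms. Taking $p_k$-th roots and letting $k\to\infty$ then yields \eqref{2.36}.

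The second route rewrites the equation as $u_{\varepsilon t}=\nabla\cdot(a\nabla u_\varepsilon)+\nabla\cdot b+f$, with $a=u_\varepsilon v_\varepsilon$, $b=-u_\varepsilon^2v_\varepsilon\nabla v_\varepsilon$ and $f=\ell v_\varepsilon w_\varepsilon$. Since Lemma \ref{lemma2.8} gives $u_\varepsilon\in L^\infty((0,T);L^p)$ for every $p$, we have $b\in L^\infty((0,T);L^{p})$ for all $p$ and $f\in L^\infty$. We then invoke a known $L^\infty$ result for porous-medium-type equations with the degenerate diffusion $u_\varepsilon v_\varepsilon\ge c_1u_\varepsilon$ (for instance a Moser-type lemma as in Tao--Winkler). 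I prefer the self-contained first route.

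The main obstacle is to keep the iteration constants uniform in $\varepsilon$ and of controlled geometric growth in $p$. In particular, the initial data enter only through $\|u_0\|_{L^\infty}+1$, and the $\ell$-source term is harmless since $\int u_\varepsilon^{p-1}\le |\Omega|+\int u_\varepsilon^p$. This bookkeeping is routine once the inequality above holds with $C(\eta)\lesssim p^{a}$.
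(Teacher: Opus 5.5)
Your second route is the paper's proof. The paper rewrites the $u_\varepsilon$-equation as $u_{\varepsilon t}=\nabla\cdot(A_\varepsilon\nabla u_\varepsilon)+\nabla\cdot B_\varepsilon+D_\varepsilon$. It records $A_\varepsilon\ge c_1(T)u_\varepsilon$ (Lemma \ref{lemma2.5}) and $B_\varepsilon,D_\varepsilon\in L^\infty((0,T);L^p(\Omega))$ for all $p$ (Lemmas \ref{lemma2.8}, \ref{lemma2.9}), and then quotes Lemma A.1 of Tao--Winkler.

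Your preferred first route is a genuinely different, self-contained argument, and it works. The paper treats $u_\varepsilon^2v_\varepsilon\nabla v_\varepsilon$ as an external flux, which is what forces it to use the higher $L^p$ bounds of Lemma \ref{lemma2.8}. You instead absorb it into the porous-medium dissipation by Young, so only $\int_\Omega u_\varepsilon^{p+1}$ remains. Interpolate $\phi=u_\varepsilon^{(p+1)/2}$ between $\|\nabla\phi\|_{L^2(\Omega)}$ and $\|\phi\|_{L^{2/3}(\Omega)}$ (Gagliardo--Nirenberg exponent $\frac34<1$), and use $\|\phi\|_{L^{2/3}(\Omega)}^2\le m\,(\int_\Omega u_\varepsilon^{p/2})^2$ exactly as in \eqref{3.32}. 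After integrating in time, this gives a recursion $M_k\le|\Omega|(K+1)^{p_k}+C(T)p_k^{\lambda}M_{k-1}^2$, with $M_k=1+\sup_{t<T}\int_\Omega u_\varepsilon^{p_k}$ and some fixed $\lambda>0$. Lemma \ref{lemma3.9} then closes it.

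Your route is the constant-weight analogue of the iteration the paper runs later via Lemmas \ref{lemma3.8}--\ref{lemma3.9}. It is available here because $v_\varepsilon\ge c_1(T)$ removes the need for the $\int_\Omega|\nabla v_\varepsilon|^8/v_\varepsilon^7$ term. What it buys is explicit, visibly $\varepsilon$-independent constants, and it uses Lemma \ref{lemma2.8} only through its case $p=2$ (via Lemma \ref{lemma2.9}). The price is the bookkeeping that the paper outsources to a black-box lemma.

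One point to get right in that bookkeeping concerns the $\ell$-term. Do not handle it via $\int_\Omega u_\varepsilon^{p-1}\le|\Omega|+\int_\Omega u_\varepsilon^p$ followed by Gronwall. The resulting factor $e^{Cp_kT}$ contributes $e^{CT}$ to every $p_k$-th root, so the product over $k$ diverges. Use $u_\varepsilon^{p-1}\le 1+u_\varepsilon^{p+1}$ instead and absorb it into the same Gagliardo--Nirenberg step. Then no $\int_\Omega u_\varepsilon^{p}$ remains on the right-hand side, and you can integrate over $(0,t)$ directly.
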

\begin{proof}
We begin by reformulating the governing equation for $u_\varepsilon$ as
$$u_{\varepsilon t}=\nabla\cdot\left(A_{\varepsilon}(x,t,u_{\varepsilon})\nabla u_{\varepsilon}\right)+\nabla\cdot B_{\varepsilon}(x,t)+D_{\varepsilon}(x,t) \qquad(x,t)\in\Omega\times(0,T_{max,\varepsilon}),$$
where the nonlinear operators are defined by
$$A_{\varepsilon}(x,t,\xi):=v_{\varepsilon}(x,t)\xi \qquad(x,t,\xi)\in\Omega\times (0,T_{max,\varepsilon})\times[0,\infty),$$
and
$$B_{\varepsilon}(x,t):=-\chi u_{\varepsilon}^{2}(x,t)v_{\varepsilon}(x,t)\nabla v_{\varepsilon}(x,t)\qquad(x,t)\in\Omega\times(0,T_{max,\varepsilon}),$$
as well as
$$ D_{\varepsilon}(x,t):=\ell v_{\varepsilon}(x,t)w_{\varepsilon}(x,t) \qquad(x,t)\in\Omega\times(0,T_{max,\varepsilon}).$$
Through applications of Lemma \ref{lemma2.5}, Lemma \ref{lemma2.8} and Lemma \ref{lemma2.9}, we derive the following estimates for any $p > 1$
$$A_{\varepsilon}(x,t,\xi)\geq c_{1}(T)\xi\qquad\text{for all }(x,t,\xi)\in\Omega\times \big(0,\min\{T,T_{\max,\varepsilon}\}\big)\times[0,\infty),$$
and
$$\|B_{\varepsilon}(\cdot,t)\|_{L^{p}(\Omega)}\leq c_{2}\quad\text{and}\quad\|D_{\varepsilon}(\cdot,t)\|_{L^{p}(\Omega)}\leq c_2\qquad\text{for all }t\in(0,T)\cap(0,T_{max,\varepsilon}),$$
with some $c_1(T)>0,c_2>0$. Thereafter  \eqref{2.36}  can be derived from  a Moser-type iteration, as recorded in (\cite{TaoW}, Lemma A.1).
\end{proof}

As an application of \eqref{2.6} and Lemma \ref{lemma2.10}, one can show that the solutions to \eqref{2.5} are in fact global in time.
\begin{lemma}\label{lemma2.11}Under the assumptions of Theorem 1.1, we  have
$T_{max,\varepsilon}=\infty$ for all $\varepsilon\in(0,1).$
\end{lemma}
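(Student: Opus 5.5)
The plan is a proof by contradiction that combines the extensibility criterion \eqref{2.6} with the local-in-time bound of Lemma \ref{lemma2.10}. Fix $\varepsilon\in(0,1)$ and suppose that $T_{max,\varepsilon}<\infty$.

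The first step is to choose $T:=T_{max,\varepsilon}$, or any finite $T\ge T_{max,\varepsilon}$, in Lemma \ref{lemma2.10}. That lemma then yields $C(T)>0$ such that $\|u_\varepsilon(\cdot,t)\|_{L^\infty(\Omega)}\le C(T)$ for all $t\in(0,T_{max,\varepsilon})$. Consequently $\limsup_{t\nearrow T_{max,\varepsilon}}\|u_\varepsilon(\cdot,t)\|_{L^\infty(\Omega)}\le C(T)<\infty$. This contradicts \eqref{2.6}, so $T_{max,\varepsilon}=\infty$.

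The only point requiring care is the dependence on $T_{max,\varepsilon}$ of the constants in Lemmas \ref{lemma2.2}--\ref{lemma2.10}. Specifically, $t_0=\min\{1,T_{max,\varepsilon}/6\}$ enters Lemma \ref{lemma2.3}. The chain that actually leads to Lemma \ref{lemma2.10} uses only the following facts:
\begin{itemize}
\item the lower bound for $v_\varepsilon$ from Lemma \ref{lemma2.5}, which depends on $T$, $\|z_0\|_{L^\infty(\Omega)}$, $m$ and $w_0$;
\item the $W^{1,\theta}$ estimates of Lemma \ref{lemma2.6};
\item the $L^p$ estimates of Lemma \ref{lemma2.8};
\item the $W^{1,\infty}$ bound of Lemma \ref{lemma2.9};
\item the Moser iteration.
\end{itemize}
None of these uses the decay from Lemma \ref{lemma2.3}. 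Therefore all constants are finite once $T<\infty$ is fixed, even though they may depend on $\varepsilon$ through $T_{max,\varepsilon}$. Such dependence is harmless here, since the claim concerns each fixed $\varepsilon$ separately.

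I do not expect any genuine obstacle, since the statement is essentially immediate from \eqref{2.6} and \eqref{2.36}. The step deserving a brief remark is the verification just described, namely that the constants in \eqref{2.36} remain finite for the finite horizon $T=T_{max,\varepsilon}$.
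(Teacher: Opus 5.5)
Your argument is correct and is the same as the paper's: the paper obtains the lemma by combining the extensibility criterion \eqref{2.6} with the bound \eqref{2.36} of Lemma \ref{lemma2.10}, taking $T:=T_{max,\varepsilon}$. Your extra check is also accurate: the chain through Lemmas \ref{lemma2.5}, \ref{lemma2.6}, \ref{lemma2.8}, \ref{lemma2.9} and \ref{lemma2.10} never uses the $T_{max,\varepsilon}$-dependent quantities $t_0,\kappa$ from Lemmas \ref{lemma2.2}--\ref{lemma2.3}, which makes explicit a point the paper leaves unstated.
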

Drawing on the a priori estimates from the previous lemmas, we now deduce the
 H\"{o}lder regularity for the global solution to \eqref{2.5} by applying standard parabolic regularity theory.
\begin{lemma}\label{lemma2.12}
For all $T>0$ and
$\varepsilon\in(0,1)$, there exist $\theta_1=\theta_1(T)\in(0,1)$ and $C(T)>0$ such that
\begin{align}\label{2.37}
   \|u_{\varepsilon}\|_{C^{\theta_1,\frac{\theta_1}{2}}(\overline{\Omega}\times[0,T])} \leq C(T),
\end{align}
\begin{align}\label{2.38}
   \|w_{\varepsilon}\|_{C^{\theta_1,\frac{\theta_1}{2}}(\overline{\Omega}\times[0,T])} \leq C(T),
\end{align}
and
\begin{align}\label{2.39}
    \|v_{\varepsilon}\|_{C^{\theta_1,\frac{\theta_1}{2}}(\overline{\Omega}\times[0,T])} \leq C(T).
\end{align}
Moreover, for all $\tau>0$ and $T>\tau$ there exist $\theta_2=\theta_2(\tau,T)\in(0,1)$ and $C(\tau,T)>0$ such that
\begin{align}\label{2.40}
    \left\|u_{\varepsilon}\right\|_{C^{2+\theta_2,1+\frac{\theta_2}{2}}(\overline{\Omega}\times[\tau,T])}+
    \left\|v_{\varepsilon}\right\|_{C^{2+\theta_2,1+\frac{\theta_2}{2}}(\overline{\Omega}\times[\tau,T])}
    +\left\|w_{\varepsilon}\right\|_{C^{2+\theta_2,1+\frac{\theta_2}{2}}(\overline{\Omega}\times[\tau,T])}\leq C(\tau,T)
\end{align}
for all $\varepsilon\in(0,1)$.
\end{lemma}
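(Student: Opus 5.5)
The plan is to read this as an application of standard parabolic regularity theory to the three equations in \eqref{2.5}. The local-in-time bounds already established supply the hypotheses, with all constants independent of $\varepsilon$: Lemma \ref{lemma2.10} gives the $L^\infty$ bound for $u_\varepsilon$, Lemma \ref{lemma2.9} the $W^{1,\infty}$ bound for $v_\varepsilon$, and Lemma \ref{lemma2.5} the lower bound for $v_\varepsilon$. Combined with Lemma \ref{lemma2.4} (taking $p$ large), these also yield $\|w_\varepsilon\|_{L^\infty}\le C(T)$ on $(0,T)$. We then proceed in three steps: H\"older bounds for $v_\varepsilon,w_\varepsilon$, then for $u_\varepsilon$, then a Schauder bootstrap away from $t=0$.

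\textbf{Step 1: $w_\varepsilon$ and $v_\varepsilon$.} Both solve heat equations with bounded right-hand sides: $-v_\varepsilon w_\varepsilon$ and $-w_\varepsilon+u_\varepsilon$, each bounded in $L^\infty(\Omega\times(0,T))$ uniformly in $\varepsilon$. The initial data $v_0,w_0$ lie in $W^{1,\infty}(\Omega)$, hence are Lipschitz. The H\"older estimates of Porzio--Vespri (or Ladyzhenskaya et al.) for linear parabolic equations with Neumann data then give \eqref{2.38} and \eqref{2.39} up to $t=0$. Alternatively, one can use the variation-of-constants formula together with $\|\nabla e^{t\Delta}\phi\|_{L^\infty}$ smoothing estimates.

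\textbf{Step 2: $u_\varepsilon$.} Write the equation as in the proof of Lemma \ref{lemma2.10}: $u_{\varepsilon t}=\nabla\cdot(v_\varepsilon u_\varepsilon\nabla u_\varepsilon)+\nabla\cdot B_\varepsilon+D_\varepsilon$. Here $B_\varepsilon$ and $D_\varepsilon$ are bounded in $L^\infty$, since $u_\varepsilon$ is bounded, $\nabla v_\varepsilon$ is bounded, and $w_\varepsilon$ is bounded. This equation degenerates where $u_\varepsilon=0$, which is the main obstacle. The key observation is that $u_\varepsilon\ge\varepsilon$ is \emph{not} uniform, so we do not rely on it. Instead, we note that $u_\varepsilon$ and $u_0+\varepsilon$ are bounded below only by $0$, and that the diffusion $v_\varepsilon u_\varepsilon$, with $v_\varepsilon\ge C(T)>0$ by Lemma \ref{lemma2.5}, is of porous-medium type with exponent $m=2$: indeed $v_\varepsilon u_\varepsilon\nabla u_\varepsilon=\frac12 v_\varepsilon\nabla u_\varepsilon^2$. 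Equations of this form, $u_t=\nabla\cdot a(x,t,u,\nabla u)+b$, satisfy the structure conditions $a\cdot\nabla u\ge c\,u|\nabla u|^2-C$ and $|a|\le C u|\nabla u|+C$ uniformly in $\varepsilon$. This is the setting of DiBenedetto's intrinsic-scaling H\"older theory for degenerate parabolic equations, and the boundary version of Porzio--Vespri covers the Neumann condition. Since $u_0\in L^\infty$ is only bounded and not continuous, we expect the interior-in-time result directly. On the closed interval $[0,T]$ the statement presumably uses that the approximation changes only the initial data; if $u_0$ is not continuous, the H\"older norm up to $t=0$ must be interpreted with that regularity. We would cite the corresponding theorem (e.g.\ Porzio--Vespri, Theorem 1.3) exactly as in \cite{WCVPDE}, with the bounds above as input.

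\textbf{Step 3: higher regularity on $[\tau,T]$.} This is where I expect trouble. The bound \eqref{2.40} for $u_\varepsilon$ in $C^{2+\theta_2,1+\theta_2/2}$ \emph{uniformly in $\varepsilon$} requires a positive lower bound for $u_\varepsilon$ to make its equation uniformly parabolic. Without one, the bound would only follow for each fixed $\varepsilon$, using $u_\varepsilon\ge\varepsilon e^{-ct}$ from a comparison argument. For $v_\varepsilon$ and $w_\varepsilon$ the route is clear. From the H\"older bounds of Steps 1--2 and maximal Sobolev regularity, we get $\nabla v_\varepsilon,\nabla w_\varepsilon$ H\"older continuous; Schauder estimates with cut-off in time then give the $C^{2+\theta_2,1+\theta_2/2}$ bounds on $[\tau,T]$. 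For $u_\varepsilon$ I would first try to combine positivity of $v_\varepsilon$ with a lower bound for $u_\varepsilon$, and failing that, settle for the $\varepsilon$-dependent statement, which suffices for classical solvability.
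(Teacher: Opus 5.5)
\textbf{Where your proposal agrees with the paper.} Your Steps 1 and 2, and the $v_\varepsilon,w_\varepsilon$ part of Step 3, coincide with the paper's proof. That proof consists solely of feeding the $L^\infty$ bounds of Lemmas \ref{lemma2.4}, \ref{lemma2.9}, \ref{lemma2.10} and \eqref{2.7} into the H\"older theory of Porzio--Vespri, and then invoking Schauder theory on $[\tau,T]$.

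\textbf{The gap, which the paper shares.} Where you stop, the paper does not: it calls the $u_\varepsilon$-part of \eqref{2.40} ``an immediate consequence of classical parabolic Schauder theory'' given \eqref{2.37}--\eqref{2.39}. Your objection is correct. Schauder estimates need the diffusivity $u_\varepsilon v_\varepsilon$ to be bounded below uniformly in $\varepsilon$, and H\"older bounds say nothing about that. So your proposal does not prove the statement as written. The missing ingredient, an $\varepsilon$-independent positive lower bound for $u_\varepsilon$ on $[\tau,T]$, is missing from the paper as well.

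Your reservation about $t=0$ is equally justified. Since $u_\varepsilon(\cdot,0)=u_0+\varepsilon$, \eqref{2.37} on $[0,T]$ forces $u_0\in C^{\theta_1}(\overline\Omega)$, which \eqref{1.2} does not provide; the paper tacitly treats $u_0$ as continuous (cf.\ the proof of Lemma \ref{lemma4.7}). With merely continuous $u_0$ one should aim for an $\varepsilon$-independent modulus of continuity near $t=0$, which is all the compactness step in Lemma \ref{lemma2.15} needs.

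\textbf{How to close the gap when $\ell>0$.} Here the lower bound can be supplied, and this is the idea your Step 3 was looking for. On $[\tau/2,T]$ one has:
\begin{itemize}
\item $v_\varepsilon\ge C(T)$ by Lemma \ref{lemma2.5};
\item $w_\varepsilon\ge e^{-t}e^{t\Delta}w_0\ge e^{-T}C(\tau/2)\int_\Omega w_0>0$ by \eqref{2.11} and \eqref{2.11*};
\item $|\nabla v_\varepsilon|^2+v_\varepsilon|\Delta v_\varepsilon|\le C_1$ by Lemma \ref{lemma2.9} and the Schauder bound for $v_\varepsilon$, which needs only \eqref{2.38} and \eqref{2.39}.
\end{itemize}
At a minimum point of $u_\varepsilon(\cdot,t)$ one has $\nabla u_\varepsilon=0$ and $\Delta u_\varepsilon\ge0$, also on $\partial\Omega$ by the Neumann condition. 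So the first equation in \eqref{2.5} reduces there to $u_{\varepsilon t}=u_\varepsilon v_\varepsilon\Delta u_\varepsilon-u_\varepsilon^2(|\nabla v_\varepsilon|^2+v_\varepsilon\Delta v_\varepsilon)+\ell v_\varepsilon w_\varepsilon$. Hence $\underline{u}_\varepsilon(t):=\min_{\overline\Omega}u_\varepsilon(\cdot,t)$ satisfies $\underline{u}_\varepsilon'\ge c_1-C_1\underline{u}_\varepsilon^2$ in the Dini sense, with $c_1>0$ independent of $\varepsilon$. Therefore $\underline{u}_\varepsilon(t)\ge\sqrt{c_1/C_1}\,\tanh\big(\sqrt{c_1C_1}\,(t-\tau/2)\big)$. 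On $[\tau,T]$ the $u_\varepsilon$-equation is then uniformly parabolic with H\"older coefficients. Standard quasilinear theory (first H\"older bounds for $\nabla u_\varepsilon$, then Schauder) gives \eqref{2.40}.

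\textbf{The case $\ell=0$.} There is no such mechanism, and the uniform bound should not be expected. The term $u_\varepsilon v_\varepsilon\nabla u_\varepsilon=\frac12 v_\varepsilon\nabla u_\varepsilon^2$ is a porous-medium-type degeneracy. For $u_0$ vanishing on an open set one expects moving interfaces across which $\nabla u$ jumps, and a uniform $C^{2+\theta_2,1+\frac{\theta_2}{2}}$ bound would exclude this in the limit.

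Your $\varepsilon$-dependent fallback is nevertheless sufficient for everything the paper does afterwards. Lemma \ref{lemma2.15} only uses $u_\varepsilon\to u$ in $C^0_{loc}$. The $C^{2,1}_{loc}$ convergence of $v_\varepsilon,w_\varepsilon$ only requires \eqref{2.37}--\eqref{2.39} as input to Schauder theory.
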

\begin{proof}
By employing the uniform $L^{\infty}$ bounds for $(u_\varepsilon)_{\varepsilon\in(0,1)}$, $(v_\varepsilon)_{\varepsilon\in(0,1)}$,
$(\nabla v_\varepsilon)_{\varepsilon\in(0,1)}$ and $(w_\varepsilon)_{\varepsilon\in(0,1)}$ established in Lemma \ref{lemma2.10}, \eqref{2.7}, Lemma \ref{lemma2.9} and Lemma \ref{lemma2.4} to control the source terms, we may directly deduce \eqref{2.37} through the standard parabolic H\"{o}lder regularity theory \cite{Holder}. The verification of \eqref{2.38} and \eqref{2.39} can be achieved through a simplified variant of this argument. The estimate \eqref{2.40} is then an immediate consequence of classical parabolic Schauder theory \cite{Schauder}, applied in conjunction with the H\"{o}lder regularity established in \eqref{2.37}-\eqref{2.39}.
\end{proof}

\begin{lemma}\label{lemma2.15}
    Suppose that $(u_0,v_0,w_0)$ satisfies \eqref{1.2} and \eqref{1.6}. Then there exists
    $(\varepsilon_{j})_{j\in \mathbb{N}}\subset (0,1)$ and functions $(u, v, w)$ fulfilling \eqref{1.7}, as well as $u\geq 0, v>0$ and $w>0$ in $\overline{\Omega}\times(0,\infty)$,
such that
\begin{align}
    &u_{\varepsilon}\rightarrow{u}\qquad \text{in}\quad C_{loc}^{0}(\overline{\Omega}\times[0,\infty)),\label{2.41}\\
    &v_{\varepsilon}\rightarrow{v}\qquad \text{in}\quad C_{loc}^{0}(\overline{\Omega}\times[0,\infty))\cap C_{loc}^{2,1}(\overline{\Omega}\times(0,\infty)),\label{2.42}\\
    &w_{\varepsilon}\rightarrow{w}\qquad \text{in}\quad C_{loc}^{0}(\overline{\Omega}\times[0,\infty))\cap C_{loc}^{2,1}(\overline{\Omega}\times(0,\infty)),\label{2.43}\\
    &\nabla v_{\varepsilon}\stackrel{*}{\rightharpoonup}\nabla v\qquad  \text{in}\quad L_{loc}^{\infty}(\Omega\times(0,\infty)),\label{2.44}\\
    &\nabla w_{\varepsilon}\stackrel{*}{\rightharpoonup}\nabla w\qquad  \text{in}\quad L_{loc}^{\infty}(\Omega\times(0,\infty))\label{2.45}
\end{align}
as $\varepsilon=\varepsilon_{j}\searrow0$, and that $(u,v,w)$ forms a continuous global weak solutions of \eqref{1.1} in the sense of
Definition \ref{Definition 2.1}.
\end{lemma}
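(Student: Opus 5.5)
The plan is a standard compactness argument built on the $\varepsilon$-independent local-in-time bounds of Lemma \ref{lemma2.12}, followed by passage to the limit in the weak formulations of \eqref{2.5}.

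First, fix $T>0$. By \eqref{2.37}--\eqref{2.39}, the families $(u_\varepsilon)$, $(v_\varepsilon)$, $(w_\varepsilon)$ are bounded in $C^{\theta_1,\frac{\theta_1}{2}}(\overline{\Omega}\times[0,T])$, so the Arzel\`a--Ascoli theorem makes them relatively compact in $C^0(\overline{\Omega}\times[0,T])$. By \eqref{2.40}, for every $\tau\in(0,T)$ the families are bounded in $C^{2+\theta_2,1+\frac{\theta_2}{2}}(\overline{\Omega}\times[\tau,T])$, hence relatively compact in $C^{2,1}(\overline{\Omega}\times[\tau,T])$. Lemma \ref{lemma2.9} bounds $\nabla v_\varepsilon$ in $L^\infty(\Omega\times(0,T))$. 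For $\nabla w_\varepsilon$, Lemma \ref{lemma2.10} bounds $u_\varepsilon$ in $L^\infty$ on $(0,T)$, and standard semigroup gradient estimates (as in Lemma \ref{lemma2.6}, with $w_0\in W^{1,\infty}$) then bound $\nabla w_\varepsilon$ in $L^\infty(\Omega\times(0,T))$. Taking $T=k\in\mathbb N$ and $\tau=1/k$ and using a diagonal argument, we extract $\varepsilon_j\searrow0$ and limits $u,v,w$ with \eqref{2.41}--\eqref{2.45}; the weak-$*$ limits of the gradients are identified with $\nabla v$, $\nabla w$ through the uniform convergence. The regularity \eqref{1.7} follows from these convergences and the $\varepsilon$-independent bounds, using lower semicontinuity of the $W^{1,\infty}$ norm for $v$.

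Next, the sign properties. $u\ge0$ and $v\ge 0$ are immediate. Strict positivity $v>0$ on $\overline\Omega\times[0,T]$ comes from the $\varepsilon$-independent lower bound \eqref{2.19*}. For $w>0$, Lemma \ref{lemma2.2} only gives a bound after time $2t_0$. Since $T_{\max,\varepsilon}=\infty$, we have $t_0=1$ there, so I would instead rerun that argument with an arbitrary small $t_0>0$. The estimate it produces, $w_\varepsilon\ge C(t_0)\|u_0\|_{L^1(\Omega)}(e^{-t_0}-e^{-2t_0})$ for $t>2t_0$, is independent of $\varepsilon$, and $\|u_0\|_{L^1(\Omega)}>0$ because $u_0\not\equiv0$. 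This gives $w>0$ for all $t>0$. Alternatively, the strong maximum principle applied to the classical limit equation for $w$ gives the same.

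Finally, the weak formulation. Multiply \eqref{2.5} by test functions $\varphi$ with $\partial_\nu\varphi=0$ (needed for the $u$-equation) and integrate by parts, writing $u_\varepsilon v_\varepsilon\nabla u_\varepsilon=\frac12 v_\varepsilon\nabla u_\varepsilon^2$. In the $v$- and $w$-equations, and in the terms $u_\varepsilon\varphi_t$, $u_\varepsilon^2v_\varepsilon\nabla v_\varepsilon\cdot\nabla\varphi$ and $\ell v_\varepsilon w_\varepsilon\varphi$, we pass to the limit using uniform convergence combined with weak-$*$ convergence of the gradients; the initial data terms converge since $u_0+\varepsilon\to u_0$. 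The term needing care, and the main obstacle, is $\frac12\int_0^\infty\int_\Omega v_\varepsilon\nabla u_\varepsilon^2\cdot\nabla\varphi$, because there is no uniform gradient bound for $u_\varepsilon$ up to $t=0$. I would handle it with \eqref{2.26} at $p=2$: combined with the lower bound of Lemma \ref{lemma2.5} and the $L^\infty$ bound of Lemma \ref{lemma2.10}, it shows that $\nabla u_\varepsilon^2=2u_\varepsilon\nabla u_\varepsilon$ is bounded in $L^2(\Omega\times(0,T))$. Hence $\nabla u_\varepsilon^2\rightharpoonup\nabla u^2$ weakly in $L^2$, with the limit identified via the convergence $u_\varepsilon^2\to u^2$. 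Together with $v_\varepsilon\to v$ uniformly, this yields \eqref{2.2}, and also $u^2\in L^1_{loc}([0,\infty);W^{1,1}(\Omega))$ and $u^2\nabla v\in L^1_{loc}$, which is \eqref{2.1}.
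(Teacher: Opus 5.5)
Your proposal is correct and follows essentially the same route as the paper. Both arguments rest on $\varepsilon$-independent H\"older/Schauder bounds from Lemma \ref{lemma2.12} combined with Arzel\`a--Ascoli, a gradient bound for $u_\varepsilon^2$ obtained from \eqref{2.26} with $p=2$ and the lower bound for $v_\varepsilon$, and then passage to the limit in the weak formulations. The one difference is that the paper uses Young's inequality to bound $\nabla u_\varepsilon^2$ only in $L^1(\Omega\times(0,T))$, while you also invoke Lemma \ref{lemma2.10} to get an $L^2$ bound. Your version directly gives the weak compactness needed for the limit in $\int_0^\infty\int_\Omega v_\varepsilon\nabla u_\varepsilon^2\cdot\nabla\varphi$, which an $L^1$ bound alone would not supply.
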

\begin{proof}
Applying Young's inequality, there exists $C=C(K,T)>0$ such that for all $T>0$,
    \begin{align*}
        \int_0^{T}\int_{\Omega}|\nabla u_{\varepsilon}^2|\leq \int_0^{T}\int_{\Omega}u_{\varepsilon}v_{\varepsilon}|\nabla u_{\varepsilon}|^2+\int_0^{T}\int_{\Omega}u_{\varepsilon}v_{\varepsilon}^{-1}\leq C,
    \end{align*}
    we may see that thanks to Lemma \ref{lemma2.8} and Lemma \ref{lemma2.9},
    $$(\nabla u_{\varepsilon}^2)_{\varepsilon\in(0,1)}~~ \mbox{is bounded in}~~L^1((0,T);W^{1,1}(\Omega))~~\mbox{for all}~~T>0.$$
Subsequently, employing Lemma \ref{lemma2.9}, Lemma \ref{lemma2.12} and the Arzel\'{a}-Ascoli compactness theorem, a straightforward extraction procedure allows us to construct a vanishing subsequence $(\varepsilon_{j})_{j\in \mathbb{N}}$ with $\varepsilon_{j}\to0$. This yields nonnegative functions $(u,v,w)$ satisfying \eqref{1.7}, \eqref{2.1} and \eqref{2.41}--\eqref{2.45}. The derivation of the identities in \eqref{2.2}, \eqref{2.3} and \eqref{2.4} can thereupon be accomplished on the basis of these convergence properties in a straightforward manner, taking $\varepsilon=\varepsilon_j\to0$ in the respective weak formulations associated with \eqref{2.5}.
\end{proof}
\section{Global Boundedness}
In this section, we shall establish the global boundedness for $\int_\Omega u_\varepsilon^{\frac3 2}$ through a self-mapping approach, which serves as the starting point for  deriving of  further regularity properties of the solutions.  In light of the degenerate-dissipation quantity $\int_{\Omega} u_\varepsilon^{k} v_\varepsilon |\nabla u_\varepsilon|^2$,
 the following lemma provides the necessary groundwork for Lemma \ref{lemma3.4} and Lemma \ref{lemma3.6}.  
\begin{lemma}\label{lemma3.1}
Let $\Omega \subset \mathbb R^3$ be  a smoothly bounded domain, and supposed that $p>0$ and $1< r<6$. Then for any $\eta>0$ one can find constant $C(\eta,p,r)>0$
such that
\begin{align}\label{3.1}
    \|\phi^{\frac{p+1}{2}}\sqrt{\psi}\|^2_{L^{r}(\Omega)}\leq \eta \int_{\Omega}\phi^{p-1}\psi|\nabla \phi|^2+\eta
    \int_{\Omega}\phi^{p+1}\psi^{-1}|\nabla \psi|^2+C(\eta, p,r)\cdot
    \left\{\int_{\Omega}\phi\right\}^{p}\left\{\int_{\Omega}\phi\psi\right\}
\end{align}for all $\phi\in C^1(\Bar{\Omega})$ and $\psi\in C^1(\overline{\Omega})$  with $\phi>0$ and $\psi>0$ in $\overline{\Omega}$.
\end{lemma}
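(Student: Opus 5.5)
Set $\rho:=\phi^{\frac{p+1}{2}}\sqrt{\psi}$ and view the left-hand side of \eqref{3.1} as $\|\rho\|_{L^r(\Omega)}^2$. The idea is to interpolate $\rho$ between $\|\nabla\rho\|_{L^2(\Omega)}$ and a low Lebesgue norm of $\rho$ that is controlled by $\int_\Omega\phi$ and $\int_\Omega\phi\psi$.

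\textbf{Step 1 (gradient).} Computing as in \eqref{2.30a},
$$\nabla\rho=\tfrac{p+1}{2}\phi^{\frac{p-1}{2}}\sqrt{\psi}\,\nabla\phi+\tfrac12\phi^{\frac{p+1}{2}}\psi^{-\frac12}\nabla\psi ,$$
so that
$$\|\nabla\rho\|_{L^2(\Omega)}^2\le \tfrac{(p+1)^2}{2}\int_\Omega\phi^{p-1}\psi|\nabla\phi|^2+\tfrac12\int_\Omega\phi^{p+1}\psi^{-1}|\nabla\psi|^2 .$$
The right-hand side is exactly a constant multiple of the two dissipative terms in \eqref{3.1}.

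\textbf{Step 2 (low norm).} Take $s:=\frac{2}{p+1}$. Then
$$\|\rho\|_{L^s(\Omega)}^s=\int_\Omega \phi\,\psi^{\frac1{p+1}}=\int_\Omega \phi^{\frac{p}{p+1}}(\phi\psi)^{\frac1{p+1}}\le \Big\{\int_\Omega\phi\Big\}^{\frac p{p+1}}\Big\{\int_\Omega\phi\psi\Big\}^{\frac1{p+1}}$$
by Hölder's inequality with exponents $\frac{p+1}{p}$ and $p+1$. Raising to the power $2/s=p+1$ gives
$$\|\rho\|_{L^s(\Omega)}^2\le \Big\{\int_\Omega\phi\Big\}^{p}\int_\Omega\phi\psi ,$$
which is precisely the structure of the last term in \eqref{3.1}. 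This explains the product form of that term.

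\textbf{Step 3 (Gagliardo--Nirenberg and Young).} In three dimensions with $1<r<6$, the Gagliardo--Nirenberg inequality (valid also for $s<1$, as already used in \eqref{2.32}) gives
$$\|\rho\|_{L^r(\Omega)}^2\le c\|\nabla\rho\|_{L^2(\Omega)}^{2a}\|\rho\|_{L^s(\Omega)}^{2-2a}+c\|\rho\|_{L^s(\Omega)}^2 ,$$
where
$$a=\frac{\frac1s-\frac1r}{\frac1s+\frac13-\frac12}=\frac{\frac{p+1}{2}-\frac1r}{\frac{p+1}{2}-\frac16}\in(0,1).$$
Here $a<1$ precisely because $r<6$, and $a>0$ because $r>1\ge s$ (and even for $s>1$ we have $r>s$ when $p<1$, since $r>1$). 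Since $a<1$, Young's inequality lets us absorb the first term into $\eta'\|\nabla\rho\|_{L^2(\Omega)}^2+C(\eta')\|\rho\|_{L^s(\Omega)}^2$. Choosing $\eta'$ small relative to $\eta/\max\{\frac{(p+1)^2}{2},\frac12\}$ and inserting Steps 1 and 2 yields \eqref{3.1}.

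\textbf{Main obstacle.} The delicate point is the validity of Gagliardo--Nirenberg with the quasi-norm exponent $s<1$ and the bookkeeping that keeps $a\in(0,1)$ throughout the range $1<r<6$. I would invoke the known extension of Gagliardo--Nirenberg to $s\in(0,1)$ (e.g.\ Li--Lankeit) rather than reprove it. All constants depend only on $\eta,p,r,\Omega$, as required.
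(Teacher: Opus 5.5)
Steps 1 and 2 are correct and match the paper. Step 3, however, contains a false claim. For $p\in(0,1)$ the base exponent $s=\frac{2}{p+1}$ lies in $(1,2)$, and $r>1$ does \emph{not} imply $r>s$. For example, $p=\frac13$, $s=\frac32$, $r=\frac54$ give $a=\frac{2/3-4/5}{2/3-1/6}=-\frac{4}{15}<0$. On the whole range $1<r\le\frac{2}{p+1}$, which the lemma covers, your exponent satisfies $a\le 0$. The absorption step $X^aY^{1-a}\le\eta' X+C(\eta')Y$ needs $a\in[0,1)$ and fails for $a<0$ (the left side blows up as $X\to0$). So the argument as written does not prove \eqref{3.1} there. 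This case never occurs in the paper's later uses ($r=2\theta>2>s$ in Lemma \ref{lemma3.4}), but it is part of the statement. The repair is one line: if $r\le s$, Hölder on the bounded domain gives $\|\rho\|_{L^r(\Omega)}^2\le|\Omega|^{2(\frac1r-\frac1s)}\|\rho\|_{L^s(\Omega)}^2$, and Step 2 finishes the proof with no gradient terms. For $r>s$ your argument goes through unchanged.

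The paper avoids this case distinction by a different order of operations. It applies Gagliardo--Nirenberg with the $L^1$ norm as base, so $a=\frac{6(r-1)}{5r}\in(0,1)$ for every $r\in(1,6)$, independently of $p$. Only afterwards does it pass from $L^1$ to $L^{\frac{2}{p+1}}$. For $p<1$ this is plain Hölder, since $\frac{2}{p+1}>1$. For $p\ge1$ it interpolates $L^1$ between $L^r$ and $L^{\frac{2}{p+1}}$ and absorbs half of $\|\rho\|_{L^r(\Omega)}^2$ into the left-hand side. This effectively reproves, in this special setting, the quasi-norm extension of Gagliardo--Nirenberg to exponents $s<1$, which your route cites as a black box. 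That citation is legitimate, and the paper itself uses such a form in \eqref{2.32}. With it granted and the trivial case $r\le s$ added, your proof is complete.
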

\begin{proof} From the Gagliardo-Nirenberg inequality,   it follows that for $1<r<6$ there exists $c_1=c_1(r,\Omega)>0$ such that
    \begin{align*}
      \|\phi^{\frac{p+1}{2}}\sqrt{\psi}\|^2_{L^{r}(\Omega)}&\leq c_1\|\nabla
      (\phi^{\frac{p+1}{2}}\sqrt{\psi})\|^{2a}_{L^{2}(\Omega)}\|\phi^{\frac{p+1}{2}}\sqrt{\psi}\|^{2(1-a)}_{L^{1}(\Omega)}
      +c_1\|\phi^{\frac{p+1}{2}}\sqrt{\psi}\|_{L^{1}(\Omega)}^2,
          \end{align*}
          with $a=\frac{6(r-1)}{5r}$.
          For any $\eta>0$,
    we use Young's inequality to get
    \begin{align*}
      c_1\|\nabla
      (\phi^{\frac{p+1}{2}}\sqrt{\psi})\|^{2a}_{L^{2}(\Omega)}
      \|\phi^{\frac{p+1}{2}}\sqrt{\psi}\|^{2(1-a)}_{L^{1}(\Omega)}
      &\leq \eta_1 \|\nabla (\phi^{\frac{p+1}{2}}\sqrt{\psi})\|_{L^{2}(\Omega)}^2
      +c_2(\eta_1,a)\|\phi^{\frac{p+1}{2}}\sqrt{\psi}\|_{L^{1}(\Omega)}^2,
    \end{align*}
    where
          \begin{equation}\label{3.1*}
          \eta_1\equiv\eta_1(\eta,p):=\frac{\eta}{(p+1)^2},\quad c_2(\eta_1,a)={\eta_1}^{\frac{a}{a-1}} {c_1}^{\frac{1}{1-a}}.
          \end{equation}
    Now if $p\geq 1$, by an interpolation inequality for $L^1(\Omega)$ between $L^r(\Omega)$ and $L^{\frac{2}{p+1}}(\Omega)$, we then have
         \begin{equation*}
         \big(c_1+c_2(\eta_1,a)\big)\|\phi^{\frac{p+1}{2}}\sqrt{\psi}\|_{L^{1}(\Omega)}^2\leq
    \frac{1}{2}\|\phi^{\frac{p+1}{2}}\sqrt{\psi}\|^2_{L^{r}(\Omega)}
    +c_3(\eta_1,p,r)\|\phi^{\frac{p+1}{2}}\sqrt{\psi}\|^2_{L^{\frac 2{p+1}}(\Omega)}\end{equation*}
    for some $c_3(\eta_1,p,r)>0$. 
      Therefore for all  $p\geq 1$, we arrive at
         \begin{equation}\label{3.2b}
         \|\phi^{\frac{p+1}{2}}\sqrt{\psi}\|^2_{L^{r}(\Omega)}\leq 2\eta_1 \|\nabla
    (\phi^{\frac{p+1}{2}}\sqrt{\psi})\|_{L^{2}(\Omega)}^2+2c_3(\eta_1,p,r)\|\phi^{\frac{p+1}{2}}\sqrt{\psi}\|^2_{L^{\frac{2}
    {p+1}}(\Omega)}.
    \end{equation}
   On the other hand, for $p\in(0,1)$, we use the H\"{o}lder inequality to  see  that
    \begin{align}\label{3.2c}
      \|\phi^{\frac{p+1}{2}}\sqrt{\psi}\|^2_{L^{r}(\Omega)}
      &\leq \eta_1 \|\nabla
      (\phi^{\frac{p+1}{2}}\sqrt{\psi})\|_{L^{2}(\Omega)}^2+(c_1+c_2(\eta_1,p,r))\|\phi^{\frac{p+1}{2}}\sqrt{\psi}\|_{L^{1}(\Omega)}^2
     \nonumber \\
      &\leq \eta_1 \|\nabla
      (\phi^{\frac{p+1}{2}}\sqrt{\psi})\|_{L^{2}(\Omega)}^2+c_4(\eta_1,p,r)\|\phi^{\frac{p+1}{2}}\sqrt{\psi}\|_{L^{\frac2{p+1}}(\Omega)}^2.
    \end{align}
    At this position,  an application  of the H\"{o}lder inequality yields
    \begin{equation}\label{3.2d}
    \begin{split}
    \|\phi^{\frac{p+1}{2}}\sqrt{\psi}\|_{L^{\frac2{p+1}}(\Omega)}^2&=\bigg\{\int_\Omega(\phi\psi)^{\frac1{p+1}}\phi^{\frac{p}{p+1}}\bigg\}^{p+1}\\
    &\leq\bigg\{\int_\Omega\phi\bigg\}^{p}\cdot\bigg\{\int_\Omega \phi\psi\bigg\}.
    \end{split}
    \end{equation}
    According to \eqref{3.1*}, a simple calculation shows that
    \begin{align}\label{3.2e}
    2\eta_1\|\nabla
    (\phi^{\frac{p+1}{2}}\sqrt{\psi})\|_{L^{2}(\Omega)}^2
    &=2\eta_1\int_\Omega\bigg|\frac{p+1}{2}\phi^{\frac{p-1}{2}}\psi^{\frac1 2}\nabla\phi+\frac1 2 \phi^{\frac{p+1}{2}}\psi^{-\frac1 2}\nabla\psi\bigg|^2\nonumber\\
    &\leq (p+1)^2\eta_1\int_\Omega\phi^{p-1}\psi|\nabla\phi|^2
    +\eta_1 \int_\Omega\phi^{p+1}\psi^{-1}|\nabla\psi|^2\nonumber\\
    &\leq\eta\int_\Omega\phi^{p-1}\psi|\nabla\phi|^2
    +\eta\int_\Omega\phi^{p+1}\psi^{-1}|\nabla\psi|^2.
    \end{align}
    Hence,  from \eqref{3.2b}--\eqref{3.2e}, it follows that \eqref{3.1} holds with the desired $C(\eta,p,r)>0$. 

\end{proof}

As an elementary but crucial preparation for our self-map type argument, employing the Neumann heat semigroup estimates similar to the proof of Lemma \ref{lemma2.4},  $\|\nabla w_\varepsilon(\cdot,t)\|_{L^{\theta}(\Omega)}$ can be controlled by $\|u_\varepsilon(\cdot,t)\|_{L^{\frac3 2}(\Omega)}$.
\begin{lemma}\label{lemma3.2}
For any $\theta<3$, then there exists $C(\theta)>0$ such that
\begin{equation}\label{3.3}
    \|\nabla w_\varepsilon(\cdot,t)\|_{L^{\theta}(\Omega)}\leq C(\theta)\cdot(1+ \sup\limits_{t>0}\|u_\varepsilon(\cdot,t)\|_{L^{\frac{3}{2}}(\Omega)})~~\mbox{for all}~~t>0.
\end{equation}
\end{lemma}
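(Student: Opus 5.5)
We argue exactly as in Lemma \ref{lemma2.4}, now for the gradient, starting from the variation-of-constants representation
\begin{equation*}
\nabla w_\varepsilon(\cdot,t)=\nabla e^{t(\Delta-1)}w_0+\int_0^t\nabla e^{(t-s)(\Delta-1)}u_\varepsilon(\cdot,s)\,ds,\qquad t>0,
\end{equation*}
which is valid for all $t>0$ since $T_{\max,\varepsilon}=\infty$ by Lemma \ref{lemma2.11}.

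For the first term we use $w_0\in W^{1,\infty}(\Omega)$ and the standard gradient estimate for the Neumann heat semigroup (\cite{Winkler(2010)}), $\|\nabla e^{t\Delta}\varphi\|_{L^\theta(\Omega)}\le c\|\nabla\varphi\|_{L^\theta(\Omega)}$. This gives $\|\nabla e^{t(\Delta-1)}w_0\|_{L^\theta(\Omega)}\le c\,e^{-t}\|\nabla w_0\|_{L^\infty(\Omega)}|\Omega|^{1/\theta}$, which is bounded by a constant depending only on $K$.

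For the integral term we apply the $L^p$--$W^{1,q}$ smoothing estimate
\begin{equation*}
\|\nabla e^{\sigma\Delta}\varphi\|_{L^\theta(\Omega)}\le c\bigl(1+\sigma^{-\frac12-\frac32(\frac1p-\frac1\theta)}\bigr)e^{-\lambda_1\sigma}\|\varphi\|_{L^p(\Omega)}
\end{equation*}
with $p=\frac32$. Since $\nabla$ annihilates constants, there is no need to subtract the mean here, though one may do so in order to use the factor $e^{-\lambda_1\sigma}$; in any case the factor $e^{-\sigma}$ coming from $e^{\sigma(\Delta-1)}$ already supplies time decay. The resulting singular exponent is
\begin{equation*}
\frac12+\frac32\Bigl(\frac23-\frac1\theta\Bigr)=\frac32-\frac{3}{2\theta},
\end{equation*}
and this is $<1$ precisely when $\theta<3$. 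Under the hypothesis $\theta<3$ the kernel
\begin{equation*}
\int_0^\infty\bigl(1+\sigma^{-\frac32+\frac{3}{2\theta}}\bigr)e^{-\sigma}\,d\sigma
\end{equation*}
is therefore finite, and pulling $\sup_{s>0}\|u_\varepsilon(\cdot,s)\|_{L^{3/2}(\Omega)}$ out of the time integral yields \eqref{3.3}. For $\theta\le 1$ the claim follows from the case $\theta\in(1,3)$ via H\"older's inequality, so we may assume $\theta>1$ throughout.

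The only delicate point is this exponent check, i.e.\ confirming the integrability of the kernel near $\sigma=0$, which is exactly where the restriction $\theta<3$ enters. Everything else follows the computation in Lemma \ref{lemma2.4}.
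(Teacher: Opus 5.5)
Your proposal is correct and is essentially the paper's proof: the same variation-of-constants representation, the same $L^{\frac32}\to W^{1,\theta}$ smoothing estimate from \cite{Winkler(2010)}, and the same check that $\frac32-\frac{3}{2\theta}<1$ exactly when $\theta<3$.

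Two minor touch-ups. First, the $L^{\frac32}\to W^{1,\theta}$ estimate with that singular exponent only holds for $\theta\ge\frac32$, and the cited gradient bound you use for $w_0$ is stated for $\theta\ge2$. Your H\"older reduction should therefore cover all such smaller $\theta$, not only $\theta\le1$. Second, the $w_0$-term depends on $\|\nabla w_0\|_{L^\infty(\Omega)}$, which \eqref{1.6} does not control, so its constant does not depend on $K$ alone.
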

\begin{proof}
By means of the variation-of-constants formula for $w_\varepsilon$, we have
	\begin{equation*}
		w_\varepsilon(\cdot,t)=e^{t(\Delta-1)}w_{0}+\int_{0}^{t}e^{(t-s)(\Delta-1)}u_\varepsilon(\cdot,s)ds.
	\end{equation*}
By the Neumann heat semigroup estimates in \cite{Winkler(2010)}, we see that
\begin{equation*}
\begin{split}
\|\nabla w_\varepsilon(\cdot,t)\|_{L^\theta(\Omega)}&=\big\|\nabla e^{t(\Delta-1)}w_0-\int_0^t\nabla e^{(t-s)(\Delta-1)}u_\varepsilon(\cdot,s)\big\|_{L^\theta(\Omega)}ds\\
&\leq c_1\|\nabla w_0\|_{L^\infty(\Omega)}+c_1\int_0^t(1+(t-s)^{-\frac1 2-\frac3 2(\frac2 3-\frac1 \theta)})e^{-\lambda(t-s)}\|u_\varepsilon(\cdot,s)\|_{L^{\frac3 2}(\Omega)}ds\\
&\leq c_2+c_2\sup\limits_{t>0}\|u_\varepsilon(\cdot,t)\|_{L^{\frac3 2}(\Omega)}\cdot \int_0^\infty(1+\sigma^{-\frac3 2+\frac3 {2\theta}})e^{-\lambda\sigma}d\sigma\\
&\leq c_3(\theta)(1+\sup\limits_{t>0}\|u_\varepsilon(\cdot,t)\|_{L^{\frac3 2}(\Omega)})
\end{split}
\end{equation*}
where due to our assumption $\theta<3$ implies that $\int_0^{\infty}(1+\sigma^{-\frac3 2+\frac3 {2\theta}})e^{-\lambda\sigma}d\sigma$ is finite. This yields \eqref{3.3}.
\end{proof}
As the preparation to make appropriate use of the dissipated contributions,  as quantified  through weighted expression of  $\int_{\Omega}u_\varepsilon^{k}v_\varepsilon|\nabla u_\varepsilon|^2$ within some range of $k$,
we  estimate $\int_0^{t}\int_\Omega u_\varepsilon^{-\frac3 4}v_\varepsilon|\nabla u_\varepsilon|^2$
 by $ \sup\limits_{t>0}\int_\Omega u_\varepsilon^{\frac3 2}(\cdot,t)$.
\begin{lemma}\label{lemma3.3}
Suppose that $K>0$ with the property \eqref{1.6} be valid. Then there exists $C(K)>0$ such that
    \begin{equation}\label{3.4}
        \int_0^{t}\int_\Omega u_\varepsilon^{-\frac3 4}v_\varepsilon|\nabla u_\varepsilon|^2\leq C(K)\bigg\{1+\bigg(\sup\limits_{t>0}\int_\Omega u_\varepsilon^{\frac3 2}(\cdot,t)\bigg)^{\frac{5}{6}}\bigg\}
    \end{equation}
    for all $\varepsilon\in(0,1)$ and $t>0$.
\end{lemma}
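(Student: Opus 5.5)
The plan is to test the first equation in \eqref{2.5} against $u_\varepsilon^{-\frac34}$, which is the choice $p=\frac14$ in the computation \eqref{2.27}; all constants must be independent of $\varepsilon$ and of $t$.

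\textbf{Step 1 (testing identity).} We compute
\begin{equation*}
4\frac{d}{dt}\int_\Omega u_\varepsilon^{\frac14}=\frac34\int_\Omega u_\varepsilon^{-\frac34}v_\varepsilon|\nabla u_\varepsilon|^2-\frac34\int_\Omega u_\varepsilon^{\frac14}v_\varepsilon\nabla u_\varepsilon\cdot\nabla v_\varepsilon+\ell\int_\Omega u_\varepsilon^{-\frac34}v_\varepsilon w_\varepsilon .
\end{equation*}
Since $p<1$, the diffusion term now enters with a favourable sign on the right. We solve for it, drop the nonpositive contribution $-\ell\int_\Omega u_\varepsilon^{-\frac34}v_\varepsilon w_\varepsilon$, and absorb half of the cross term by Young's inequality. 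After integrating over $(0,t)$ this gives
\begin{equation*}
\frac38\int_0^t\!\!\int_\Omega u_\varepsilon^{-\frac34}v_\varepsilon|\nabla u_\varepsilon|^2\le 4\int_\Omega u_\varepsilon^{\frac14}(\cdot,t)+\frac38\int_0^t\!\!\int_\Omega u_\varepsilon^{\frac54}v_\varepsilon|\nabla v_\varepsilon|^2 .
\end{equation*}
By H\"older's inequality and \eqref{2.8}, the first term on the right is at most $4|\Omega|^{\frac34}m^{\frac14}\le C(K)$.

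\textbf{Step 2 (extracting the power $\frac56$).} Write $S:=\sup_{t>0}\int_\Omega u_\varepsilon^{\frac32}$. H\"older's inequality with exponents $\frac65$ and $6$ gives
\begin{equation*}
\int_\Omega u_\varepsilon^{\frac54}v_\varepsilon|\nabla v_\varepsilon|^2\le S^{\frac56}\Big(\int_\Omega v_\varepsilon^6|\nabla v_\varepsilon|^{12}\Big)^{\frac16}\le S^{\frac56}\,\|v_\varepsilon\|_{L^\infty(\Omega)}^{\frac{17}{6}}\Big(\int_\Omega v_\varepsilon^{-11}|\nabla v_\varepsilon|^{12}\Big)^{\frac16}.
\end{equation*}
Lemma \ref{lemma2.3} supplies the decaying factor $\|v_\varepsilon\|_{L^\infty(\Omega)}^{\frac{17}{6}}\le C(K)e^{-\frac{17}{6}\kappa t}$. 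The claim \eqref{3.4} therefore reduces to showing
\begin{equation*}
\int_0^\infty e^{-\frac{17}{6}\kappa s}\Big(\int_\Omega v_\varepsilon^{-11}|\nabla v_\varepsilon|^{12}\Big)^{\frac16}ds\le C(K).
\end{equation*}

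\textbf{Step 3 (main obstacle).} The difficulty is to prove this time-integrated bound \emph{without} reintroducing $S$. Lemma \ref{lemma2.7} with $q=10$ controls $\int v_\varepsilon^{-11}|\nabla v_\varepsilon|^{12}$, but only in terms of $\int_\Omega w_\varepsilon^{6}v_\varepsilon$. Bounding $w_\varepsilon$ in $L^6$ through Lemma \ref{lemma2.4} with $p=\frac32$ would feed a positive power of $S$ back in and destroy the exponent $\frac56$.

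My first attempt is to keep the weight $v_\varepsilon$ attached to $w_\varepsilon$ and exploit the two $S$-independent facts available: $\int_0^\infty\!\int_\Omega v_\varepsilon w_\varepsilon\le\int_\Omega v_0$ from \eqref{2.10}, and the pointwise decay of $v_\varepsilon$. Concretely, I would multiply \eqref{2.15} by $e^{-a s}$ for small $a>0$ and integrate in time. I would then apply H\"older in time to the $\frac16$-power, and split $\int_\Omega w_\varepsilon^6v_\varepsilon$ via a Gagliardo--Nirenberg/semigroup bound for $w_\varepsilon$ that depends only on the mass bounds \eqref{2.8}--\eqref{2.9} and on $\|w_0\|_{L^\infty}$.

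If that fails, my fallback is to lower the Lebesgue exponent on $\nabla v_\varepsilon$. I would apply Lemma \ref{lemma2.7} with $q=2$, where only $\int_\Omega w_\varepsilon^2v_\varepsilon$ is needed, and this is controlled through Lemma \ref{lemma2.4} with $p=1$, $\mu=2$. To make up for the weaker gradient control, I would use the exponential decay of $v_\varepsilon$ to absorb the extra powers of $v_\varepsilon$ in the H\"older step, and accept a non-sharp exponent on $S$ in this intermediate bound.

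In either route, once the time integral is bounded by $C(K)$, combining Steps 1--2 yields \eqref{3.4}.
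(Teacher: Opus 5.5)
Your Steps 1 and 2 are correct and agree with the paper: test with $u_\varepsilon^{-3/4}$, apply Young, then H\"older with exponents $\frac65$ and $6$. The proof is nevertheless not complete, because Step 3 only lists tentative strategies and neither of them delivers \eqref{3.4}.

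The obstacle in Step 3 is self-inflicted. By writing $v_\varepsilon^6=v_\varepsilon^{17}\cdot v_\varepsilon^{-11}$ you pass to the singular functional $\int_\Omega v_\varepsilon^{-11}|\nabla v_\varepsilon|^{12}$. That functional is only accessible through Lemma \ref{lemma2.7} with $q=10$, hence through $\int_\Omega w_\varepsilon^6v_\varepsilon$.
\begin{itemize}
\item Controlling $\int_\Omega w_\varepsilon^6v_\varepsilon$ requires $w_\varepsilon\in L^6$, i.e.\ $u_\varepsilon\in L^p$ with $p>\frac65$ by Lemma \ref{lemma2.4}. This reintroduces a positive power of $S$, so your first route cannot give the exponent $\frac56$.
\item That route also carries the initial term $\int_\Omega v_0^{-9}|\nabla v_0|^{10}$, which is not bounded in terms of $K$, since \eqref{1.6} gives no positive lower bound for $v_0$.
\item Your fallback with $q=2$ explicitly concedes an exponent on $S$ worse than $\frac56$, so it does not prove the statement as written.
\end{itemize}

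The missing observation is that no weight is needed. Estimate directly
$\big(\int_\Omega v_\varepsilon^6|\nabla v_\varepsilon|^{12}\big)^{1/6}\le\|v_\varepsilon\|_{L^\infty(\Omega)}\|\nabla v_\varepsilon\|_{L^{12}(\Omega)}^2$.
Then note that $\|\nabla v_\varepsilon(\cdot,t)\|_{L^{12}(\Omega)}$ is bounded uniformly in $t$ and $\varepsilon$ using only the mass bound \eqref{2.8}:
\begin{itemize}
\item Lemma \ref{lemma2.4} with $p=1$ allows every $\mu<3$. Taking $\mu=\frac52$ gives $\sup_{t>0}\|w_\varepsilon(\cdot,t)\|_{L^{5/2}(\Omega)}\le C(\|w_0\|_{L^{5/2}(\Omega)}+m)\le C(K)$.
\item Lemma \ref{lemma2.6} with $p=\frac52$ admits every $\theta<\frac{3p}{3-p}=15$, in particular $\theta=12$.
\end{itemize}
Combining this with the decay $\|v_\varepsilon(\cdot,s)\|_{L^\infty(\Omega)}\le C(K)e^{-\kappa s}$ from Lemma \ref{lemma2.3}, the cross term is at most $C(K)S^{5/6}\int_0^te^{-\kappa s}\,ds\le C(K)\kappa^{-1}S^{5/6}$. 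This is exactly how the paper closes the argument, citing Lemmas \ref{lemma2.3} and \ref{lemma2.6}.
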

\begin{proof}
    Testing the first equation in \eqref{2.5} by $u_\varepsilon^{-\alpha}$, we obtain
    \begin{equation*}
    \begin{split}
        \frac{1}{1-\alpha}\frac{d}{dt}\int_\Omega u_\varepsilon^{1-\alpha}-\alpha\int_\Omega u_\varepsilon^{-\alpha} v_\varepsilon|\nabla u_\varepsilon|^2
        &= -\alpha\int_\Omega u_\varepsilon^{1-\alpha}v_\varepsilon\nabla v_\varepsilon\cdot\nabla u_\varepsilon+\ell\int_\Omega u_\varepsilon^{-\alpha}v_\varepsilon w_\varepsilon,\\
        \end{split}
    \end{equation*}
    which, upon applying the Cauchy-Schwarz inequality, yields
    \begin{equation*}
    \begin{split}
        \frac{1}{1-\alpha}\frac{d}{dt}\int_\Omega u_\varepsilon^{1-\alpha}-\frac{\alpha}2\int_\Omega u_\varepsilon^{-\alpha} v_\varepsilon|\nabla u_\varepsilon|^2
        &\geq -\frac{\alpha}2\int_\Omega u_\varepsilon^{2-\alpha}v_\varepsilon|\nabla v_\varepsilon|^2+\ell\int_\Omega u_\varepsilon^{-\alpha}v_\varepsilon w_\varepsilon.\\
        \end{split}
    \end{equation*}
  Now, setting $\alpha=\frac{3}{4}$, we derive
    \begin{equation}\label{3.5}
    \begin{split}
         \frac{3}8\int_\Omega u_\varepsilon^{-\frac{3}{4}} v_\varepsilon|\nabla u_\varepsilon|^2
         &\leq 4\frac{d}{dt}\int_\Omega u_\varepsilon^{\frac1 4}+\frac{3}{8}\int_\Omega u_\varepsilon^{\frac5 4}v_\varepsilon|\nabla v_\varepsilon|^2-\ell\int_\Omega u_\varepsilon^{-\alpha}v_\varepsilon w_\varepsilon\\
         \end{split}
    \end{equation}
    for all $\varepsilon\in(0,1)$.
   Integrating \eqref{3.5} over time and applying Lemma \ref{lemma2.3}, Lemma \ref{lemma2.6}, there exists $c_1=c_1(K)>0$ such that
    \begin{equation*}
        \begin{split}
            \int_0^t\int_\Omega u_\varepsilon^{-\frac{3}{4}} v_\varepsilon|\nabla u_\varepsilon|^2&\leq \frac{32}{3}\int_\Omega u_\varepsilon^{\frac1 4}(\cdot,t)+\int_0^t\int_\Omega u_\varepsilon^{\frac5 4}v_\varepsilon|\nabla v_\varepsilon|^2\\
            &\leq\frac{32}{3}\int_\Omega u_\varepsilon^{\frac1 4}(\cdot,t)+e^{2\kappa}\|v_0\|_{L^\infty(\Omega)}\int_0^t e^{-\kappa s}\int_\Omega \big(u_\varepsilon^{\frac3 2}\big)^{\frac5 6}\cdot\big(|\nabla v_\varepsilon|^2\big)\\
            &\leq 12\int_\Omega u_\varepsilon(\cdot,t)+12|\Omega|+e^{2\kappa}\|v_0\|_{L^\infty(\Omega)}\int_0^t e^{-\kappa s}\bigg\{\int_\Omega u_\varepsilon^{\frac{3}{2}} \bigg\}^{\frac{5}{6}}\bigg\{\int_\Omega |\nabla v_\varepsilon|^{12}\bigg\}^{\frac{1}{6}}\\
            &\leq 12m+12|\Omega|+c_1(K)\bigg(\sup\limits_{t>0}\int_\Omega u_\varepsilon^{\frac{3}{2}}\bigg)^{\frac{5}{6}}\int_0^t e^{-\kappa s}\\
        \end{split}
    \end{equation*}
    for all $t\in(0,\infty)$. This implies that \eqref{3.4} holds.
\end{proof}
The following refine interpolation inequality of Lemma \ref{lemma3.1} establishes the domination of the integral
 $\int_\Omega \varphi^{\beta}\psi$
by the dissipated quantity   $\int_{\Omega}\varphi^{k}\psi|\nabla \varphi|^2$ 
and  singularly weighted integral  $\int_\Omega\frac{|\nabla \psi|^q}{\psi^{q-1}}$ for parameters $\beta,k,q$ within appropriate ranges.


\begin{lemma}\label{lemma3.4}
Let $\Omega\subset\mathbb{R}^3$ be  a smoothly bounded domain, $k>-1$ and $\beta\in (k+2,k+\frac{8}{3})$. Then for all $m^*>0$, one can find $C_1=C_1(k, \beta)>0$ and $C_2=C_2(m^*,k,\beta)>0$ with the property that whenever $\varphi\in C^1(\overline \Omega)$ and $\psi \in C^1(\overline \Omega)$ are positive 
in $\overline \Omega$ fulfilling  $\int_{\Omega}\varphi\leq m^*$,
  \begin{align}\label{3.6}
    \int_{\Omega}\varphi^{\beta}\psi\leq  2\int_{\Omega}\varphi^{k}\psi|\nabla \varphi|^2+C_1\int_\Omega\frac{|\nabla \psi|^{\frac{2\beta}{\beta-k-2}}}{\psi^{\frac{2\beta}{\beta-k-2}-1}}+C_2\int_{\Omega}\varphi \psi
\end{align} holds.
\end{lemma}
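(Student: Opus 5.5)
The plan is to combine the interpolation inequality of Lemma \ref{lemma3.1}, applied with $p:=k+1>0$, with a Hölder--Young splitting. With this choice of $p$ the dissipation term in \eqref{3.1} is exactly $\int_\Omega\varphi^{k}\psi|\nabla\varphi|^2$. The extra gradient term $\int_\Omega\varphi^{k+2}\psi^{-1}|\nabla\psi|^2$ that Lemma \ref{lemma3.1} produces will be traded, by Young's inequality, against a small multiple of $\int_\Omega\varphi^\beta\psi$ and the singular weight $\int_\Omega|\nabla\psi|^{q}\psi^{1-q}$ with $q:=\frac{2\beta}{\beta-k-2}$. The small multiple of $\int_\Omega\varphi^\beta\psi$ is then absorbed into the left-hand side.

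\emph{Step 1 (Hölder in terms of an $L^r$ norm).} Fix $r\in[2,6)$ close to $6$ so that $\beta<k+3-\frac2r$; this is possible precisely because $\beta<k+\frac83$. Set
\[
a:=\frac{2(\beta-1)}{(k+2)r-2},\qquad b:=1-\frac{ar}{2},\qquad c:=\frac{ar}{2}-a .
\]
Then $a,b,c\ge0$, $a+b+c=1$, and $ra<2$; the last condition is equivalent to $\beta<k+3-\frac2r$. Writing $\varphi^\beta\psi=(\varphi^{\frac{(k+2)r}{2}}\psi^{\frac r2})^{a}(\varphi\psi)^{b}\varphi^{c}$ (the exponents of $\psi$ and $\varphi$ match by construction), the three-factor Hölder inequality together with $\int_\Omega\varphi\le m^*$ gives
\[
\int_\Omega\varphi^\beta\psi\le (m^*)^c\,\|\varphi^{\frac{k+2}{2}}\sqrt\psi\|_{L^r(\Omega)}^{ra}\Big\{\int_\Omega\varphi\psi\Big\}^{b}.
\]
Since $ra<2$ and $\frac{2b}{2-ra}=1$, Young's inequality then yields, for any $\eta_0>0$,
\[
\int_\Omega\varphi^\beta\psi\le \eta_0\|\varphi^{\frac{k+2}{2}}\sqrt\psi\|_{L^r(\Omega)}^{2}+C(\eta_0,m^*)\int_\Omega\varphi\psi .
\]

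\emph{Step 2 (Lemma \ref{lemma3.1}).} Applying \eqref{3.1} with $p=k+1$, the above $r$, and $\eta=1$ bounds the $L^r$ term by
\[
\int_\Omega\varphi^k\psi|\nabla\varphi|^2+\int_\Omega\varphi^{k+2}\psi^{-1}|\nabla\psi|^2+C(m^*)^{k+1}\int_\Omega\varphi\psi .
\]
Choosing $\eta_0:=1$ keeps the coefficient of the dissipation equal to $1$.

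\emph{Step 3 (the cross term).} Write $\varphi^{k+2}\psi^{-1}|\nabla\psi|^2=(\varphi^\beta\psi)^{\frac{k+2}{\beta}}\cdot|\nabla\psi|^2\psi^{-1-\frac{k+2}{\beta}}$. Apply Young's inequality with the conjugate exponents $\frac{\beta}{k+2}$ and $\frac{\beta}{\beta-k-2}$, which is legitimate since $\beta>k+2$. This gives
\[
\int_\Omega\varphi^{k+2}\psi^{-1}|\nabla\psi|^2\le\frac12\int_\Omega\varphi^\beta\psi+C_1(k,\beta)\int_\Omega\frac{|\nabla\psi|^{q}}{\psi^{q-1}},\qquad q=\frac{2\beta}{\beta-k-2}.
\]
The exponent of $\psi$ checks out: $\big(1+\frac{k+2}{\beta}\big)\frac{\beta}{\beta-k-2}=\frac{\beta+k+2}{\beta-k-2}=q-1$.

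\emph{Step 4 (absorb).} Absorbing $\frac12\int_\Omega\varphi^\beta\psi$ into the left-hand side and multiplying by $2$ gives \eqref{3.6}. The dissipation then appears with coefficient exactly $2$, $C_1$ depends only on $(k,\beta)$, and $C_2$ depends on $(m^*,k,\beta)$ (through $r$, which is chosen from $k$ and $\beta$).

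The main obstacle is the exponent bookkeeping in Step 1. The requirement $ra<2$, needed for the dissipation to enter linearly, must be compatible with $r<6$ from the Gagliardo--Nirenberg range in Lemma \ref{lemma3.1}. This compatibility is exactly where the upper bound $\beta<k+\frac83$ is used, while the lower bound $\beta>k+2$ is used only in Step 3.
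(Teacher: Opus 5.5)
Your proof is correct and follows the paper's route: Hölder reduces the estimate to $\|\varphi^{\frac{k+2}{2}}\sqrt\psi\|_{L^r(\Omega)}^2$, then Lemma \ref{lemma3.1} is applied with $p=k+1$, the cross term $\int_\Omega\varphi^{k+2}\psi^{-1}|\nabla\psi|^2$ is split by Young with exponents $\frac{\beta}{k+2}$ and $\frac{\beta}{\beta-k-2}$, and $\frac12\int_\Omega\varphi^\beta\psi$ is absorbed. The only difference is your Step 1. The paper takes exactly $r=\frac{2}{k+3-\beta}\in(2,6)$, so the two-factor Hölder bound $\int_\Omega(\varphi^{\frac{k+2}{2}}\psi^{\frac12})^2\varphi^{\beta-k-2}\le\|\varphi^{\frac{k+2}{2}}\sqrt\psi\|_{L^r(\Omega)}^2\{\int_\Omega\varphi\}^{\beta-k-2}$ already gives the squared norm, and the factor $(m^*)^{\beta-k-2}$ is absorbed into the choice of $\eta$ in Lemma \ref{lemma3.1}; your slack choice of $r$ near $6$ instead needs the extra factor $\int_\Omega\varphi\psi$ and one more Young step, which is equivalent but slightly longer.
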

\begin{proof}
    Let
 $\theta:=\frac{1}{k+3-\beta}$ and $ \theta_*:=\frac{1}{\beta-k-2}.$
   Then 
       $ \theta\in (1,3)$
   can be warranted by $\beta \in (k+2,k+\frac{8}{3})$.
Consequently, by the H\"{o}lder inequality and applying  Lemma \ref{lemma3.1} to $p:=k+1$ and $r:=2\theta$,  we conclude that
there exists
$c_1=c_1(\beta,k,m)$ such that
\begin{align*}
    \int_{\Omega}\varphi^{\beta}\psi
    &=\int_{\Omega}(\varphi^{\frac{k+2}{2}}\psi^{\frac{1}{2}})^2\varphi^{\beta-k-2} \nonumber
    \\
    &\leq\|\varphi^{\frac{k+2}{2}}\psi^{\frac{1}{2}}\|^2_{L^{2\theta}(\Omega)}\cdot\|\varphi^{\beta-k-2}\|_{L^{\theta_*}(\Omega)}\nonumber
    \\
    &\leq (m^*)^{\frac{1}{\theta_*}}\|\varphi^{\frac{k+2}{2}}\psi^{\frac{1}{2}}\|^2_{L^{2\theta}(\Omega)}\nonumber
    \\
    &\leq \int_{\Omega}\varphi^{k}\psi|\nabla
    \varphi|^2+  \int_{\Omega}\varphi^{k+2}\frac{|\nabla
    \psi|^2}{\psi}+c_1\int_{\Omega}\varphi \psi.
\end{align*}
Due to $k+2<\beta$ and using the Young inequality, there exists $c_2=c_2(k,\beta)>0$ such that
    \begin{equation*}
        \begin{split}
            \int_\Omega \varphi^{k+2}\frac{|\nabla \psi|^2}{\psi}&=\int_\Omega(\varphi^{\beta}\psi)^{\frac{k+2}{\beta}}\cdot\frac{|\nabla \psi|^2}{\psi^{1+\frac{k+2}{\beta}}}\\
            &\leq \frac{1}{2}\int_\Omega \varphi^{\beta}\psi+c_2\int_\Omega\frac{|\nabla \psi|^{\frac{2\beta}{\beta-k-2}}}{\psi^{\frac{2\beta}{\beta-k-2}-1}}.\\
        \end{split}
    \end{equation*}
      Hence, we have
    \begin{equation*}
        \int_\Omega \varphi^\beta \psi\leq 2\int_{\Omega}\varphi^{k}\psi|\nabla \varphi|^2+2c_2\int_\Omega\frac{|\nabla \psi|^{\frac{2\beta}{\beta-k-2}}}{\psi^{\frac{2\beta}{\beta-k-2}-1}}+2c_1\int_{\Omega}\varphi \psi,
    \end{equation*}
which yields \eqref{3.6}.
\end{proof}

    Thanks to the functional inequality expressed in \eqref{3.6}, we derive  global bounds of $\int_\Omega u_\varepsilon^{\frac{3}{2}}$
by constructing a closed-loop estimate.
 This result provides the foundation for establishing $L^p$-bounds of $u_\varepsilon$ for all $p\geq2$.
\begin{lemma}\label{lemma3.5}
 Let $K>0$ be given  in \eqref{1.6}. Then one can find $M=M(K)$ such that
    \begin{equation}\label{3.7}
        \int_\Omega u_\varepsilon^{\frac{3}{2}}(\cdot,t)\leq M~~\mbox{for all}~~t\in(0,\infty).
        \end{equation}
\end{lemma}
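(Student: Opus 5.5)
We argue by a self-map (closed-loop) argument on $M_\varepsilon(T):=\sup_{t\in(0,T)}\int_\Omega u_\varepsilon^{3/2}(\cdot,t)$, which is finite for each $T$ by Lemma \ref{lemma2.8}. The aim is an inequality of the form $M_\varepsilon(T)\le C(K)\{1+M_\varepsilon(T)^{\lambda}\}$ with some $\lambda<1$ and $C(K)$ independent of $T$ and $\varepsilon$; this forces $M_\varepsilon(T)\le M(K)$, and letting $T\to\infty$ yields \eqref{3.7}. (In Lemmas \ref{lemma3.2} and \ref{lemma3.3} the supremum over $t>0$ is to be replaced by the supremum over $(0,T)$, which their proofs allow verbatim.)

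\textbf{Testing step.} Test the first equation of \eqref{2.5} by $u_\varepsilon^{1/2}$, as in \eqref{2.27} with $p=\frac32$, and use Young's inequality:
\begin{equation*}
\frac{d}{dt}\int_\Omega u_\varepsilon^{3/2}+c\int_\Omega u_\varepsilon^{1/2}v_\varepsilon|\nabla u_\varepsilon|^2\le C\int_\Omega u_\varepsilon^{5/2}v_\varepsilon|\nabla v_\varepsilon|^2+C\int_\Omega u_\varepsilon^{1/2}v_\varepsilon w_\varepsilon .
\end{equation*}
The source term is harmless. Since $\int_\Omega u_\varepsilon^{1/2}v_\varepsilon w_\varepsilon\le \int_\Omega u_\varepsilon v_\varepsilon+\int_\Omega v_\varepsilon w_\varepsilon$, the first part is bounded via Lemma \ref{lemma2.3} and \eqref{2.8} by an exponentially decaying function, and the second is integrable in time by \eqref{2.10}.

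\textbf{Cross term.} Write
\begin{equation*}
u_\varepsilon^{5/2}v_\varepsilon|\nabla v_\varepsilon|^2=u_\varepsilon^{5/2}v_\varepsilon^{3}\cdot\frac{|\nabla v_\varepsilon|^2}{v_\varepsilon^{2}}
\end{equation*}
and apply Young's inequality. This gives $\eta\int_\Omega u_\varepsilon^{\beta}v_\varepsilon$ plus a term of the form $\int_\Omega v_\varepsilon^{a}\frac{|\nabla v_\varepsilon|^{q+2}}{v_\varepsilon^{q+1}}$, where $\beta>\frac52$ is close to $\frac52$ and $q+2=\frac{2\beta}{\beta-5/2}$ is large. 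The power $a\ge0$ of $v_\varepsilon$ is then absorbed using $v_\varepsilon\le\|v_0\|_{L^\infty(\Omega)}$.

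We then invoke Lemma \ref{lemma3.4} with $k=\frac12$ and $\beta\in(\frac52,\frac{19}{6})$, with $m^*=m$ taken from \eqref{2.8}. This bounds $\int_\Omega u_\varepsilon^\beta v_\varepsilon$ by
\begin{itemize}
\item the dissipation $2\int_\Omega u_\varepsilon^{1/2}v_\varepsilon|\nabla u_\varepsilon|^2$,
\item the term $\int_\Omega \frac{|\nabla v_\varepsilon|^{2\beta/(\beta-5/2)}}{v_\varepsilon^{2\beta/(\beta-5/2)-1}}$,
\item the term $C\int_\Omega u_\varepsilon v_\varepsilon$, which decays exponentially.
\end{itemize}
With $\eta$ small, the dissipation part is absorbed.

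\textbf{Weighted gradient integral (main obstacle).} The remaining task is to control $\int_0^T\int_\Omega v_\varepsilon^{-q-1}|\nabla v_\varepsilon|^{q+2}$ for this large $q$. Integrating Lemma \ref{lemma2.7} in time gives
\begin{equation*}
\gamma\int_0^T\int_\Omega v_\varepsilon^{-q-1}|\nabla v_\varepsilon|^{q+2}\le \int_\Omega v_0^{1-q}|\nabla v_0|^q+C\int_0^T\int_\Omega\bigl(w_\varepsilon^{\frac{q+2}{2}}v_\varepsilon+v_\varepsilon\bigr).
\end{equation*}
Since $\int_\Omega v_\varepsilon\le Ce^{-\kappa t}$, everything reduces to bounding $\int_\Omega w_\varepsilon^{(q+2)/2}$ in terms of $M_\varepsilon(T)$.

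For this, Lemma \ref{lemma2.4} with $p=\frac32$ gives $\|w_\varepsilon\|_{L^\mu}\le C(1+M_\varepsilon^{2/3})$ for every finite $\mu$ (here $3-2p=0$). Hence
\begin{equation*}
\int_0^T\int_\Omega w_\varepsilon^{\frac{q+2}2}v_\varepsilon\le C(1+M_\varepsilon^{\frac{q+2}{3}})\int_0^\infty e^{-\kappa s}\,ds .
\end{equation*}
The difficulty is that the exponent $\frac{q+2}{3}$ is large. One fix is to use instead the Hölder split $\int_\Omega w_\varepsilon^{(q+2)/2}v_\varepsilon\le \|w_\varepsilon\|_{L^{\mu}}^{(q+2)/2}\|v_\varepsilon\|_{L^{\mu'}}$, but this alone does not lower the power. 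The cleaner fix is to reorder the Young splitting: keep a small power of $u_\varepsilon$ multiplying the gradient term rather than producing the pure weighted integral. Alternatively, one can use the time-integrated bound of Lemma \ref{lemma3.3}, which gives $\int_0^T\int_\Omega u_\varepsilon^{-3/4}v_\varepsilon|\nabla u_\varepsilon|^2\le C(1+M_\varepsilon^{5/6})$, to replace part of the needed control, so that the final power of $M_\varepsilon$ is below $1$.

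Concretely, we would first try to choose $\beta$ very close to $\frac52$ and interpolate the term $u_\varepsilon^{5/2}v_\varepsilon|\nabla v_\varepsilon|^2$ between $u_\varepsilon^{\beta}v_\varepsilon$ and $u_\varepsilon^{-3/4}$-weighted quantities, exploiting the exponential decay of $v_\varepsilon$ as a multiplying factor. The exponents must then be balanced so that every power of $M_\varepsilon$ appearing is strictly below $1$. After integrating in time this yields $M_\varepsilon(T)\le C(1+M_\varepsilon(T)^\lambda)$ with $\lambda<1$. The exponent bookkeeping needed to achieve $\lambda<1$ is the step we expect to be the main obstacle.
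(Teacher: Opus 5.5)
\textbf{Verdict: genuine gap.} Your architecture matches the paper's: a closed-loop bound on $\sup_{(0,T)}\int_\Omega u_\varepsilon^{3/2}$, testing by $u_\varepsilon^{1/2}$, Young on the cross term, then Lemmas~\ref{lemma3.4}, \ref{lemma2.7} and \ref{lemma3.3}. But the decisive step, producing a power $\lambda<1$ of $M$, is left open, and the route you set up cannot produce it.

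\textbf{Why $k=\frac12$ fails.} With $k=\frac12$ in Lemma~\ref{lemma3.4}, the weighted exponent $\frac{2\beta}{\beta-5/2}$ is decreasing in $\beta$ and exceeds $\frac{19}{2}$ on all of $(\frac52,\frac{19}{6})$. So taking $\beta$ near $\frac52$ goes the wrong way. Lemma~\ref{lemma2.7} then needs $\sup_t\int_\Omega w_\varepsilon^{r}$ with $r=\frac{\beta}{\beta-5/2}>\frac{19}{4}$. The information available on $w_\varepsilon$ is:
\begin{itemize}
\item $\|w_\varepsilon\|_{L^\mu(\Omega)}\le C(K)$ for $\mu<3$, from Lemma~\ref{lemma2.4} with $p=1$ and the mass bound \eqref{2.8}. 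You overlooked this $M$-independent bound.
\item $\|w_\varepsilon\|_{L^\mu(\Omega)}+\|\nabla w_\varepsilon\|_{L^{s}(\Omega)}\le C(1+M^{2/3})$ for $\mu<\infty$ and $s<3$, from Lemmas~\ref{lemma2.4} and \ref{lemma3.2}.
\end{itemize}
Interpolating these, or using Gagliardo--Nirenberg, gives no better than $M^{\frac23(r-3)}$. Since $\frac23(r-3)>\frac76>1$, the loop does not close.

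Your other suggestions (``reorder the Young splitting'', ``interpolate the cross term with $u_\varepsilon^{-3/4}$-weighted quantities'') are not carried out. Note also that Lemma~\ref{lemma3.3} controls a $\nabla u_\varepsilon$-dissipation, which cannot enter the cross term $u_\varepsilon^{5/2}v_\varepsilon|\nabla v_\varepsilon|^2$ directly.

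\textbf{The missing idea.} Apply Lemma~\ref{lemma3.4} with a \emph{negative} $k$. This lowers the weighted exponent while keeping $\beta>\frac52$. The paper takes $k_0=-\frac{4}{33}$ and $\beta_0$ the midpoint of $(\frac{171}{127}(k_0+2),\,k_0+\frac83)$, so that $\beta_0>\frac52$ and $\frac{2\beta_0}{\beta_0-k_0-2}\approx 7.7$.
\begin{itemize}
\item The price is the dissipation $\int_\Omega u_\varepsilon^{k_0}v_\varepsilon|\nabla u_\varepsilon|^2$, which testing by $u_\varepsilon^{1/2}$ does not supply. Since $-\frac34<k_0<\frac12$, Young gives $u_\varepsilon^{k_0}\le c\,u_\varepsilon^{-3/4}+\frac14u_\varepsilon^{1/2}$. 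The second part is absorbed. The first is controlled after time integration by Lemma~\ref{lemma3.3} at cost $M^{5/6}$; this is exactly where that lemma enters.
\item The term $\int_\Omega w_\varepsilon^{\beta_0/(\beta_0-k_0-2)}$ is then estimated by Gagliardo--Nirenberg between $\|\nabla w_\varepsilon\|_{L^{11/4}(\Omega)}\le C(1+M^{2/3})$ (Lemma~\ref{lemma3.2}) and the $M$-independent bound on $\|w_\varepsilon\|_{L^{5/2}(\Omega)}$. This yields $M^{\iota_0}$ with $\iota_0<1$.
\item The term $\int_\Omega v_\varepsilon|\nabla v_\varepsilon|^{4\beta_0/(2\beta_0-5)}$ from Young on the cross term should not be converted into the singular weight, as you do. By Lemma~\ref{lemma2.6} and the $M$-independent bound on $\|w_\varepsilon\|_{L^\mu(\Omega)}$ for $\mu<3$, $\nabla v_\varepsilon$ is bounded in every $L^\theta(\Omega)$, $\theta<\infty$, independently of $M$. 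Hence this term is at most $C(K)e^{-\kappa t}$.
\end{itemize}
Altogether $\int_\Omega u_\varepsilon^{3/2}\le C(K)\bigl(1+M^{\max\{5/6,\iota_0\}}\bigr)$, which closes the loop.

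\textbf{Minor slip.} Your bound $u_\varepsilon^{1/2}w_\varepsilon\le u_\varepsilon+w_\varepsilon$ is false for large $w_\varepsilon$. Use instead $u_\varepsilon^{1/2}w_\varepsilon\le\frac12(u_\varepsilon+w_\varepsilon^2)$ together with the $M$-independent $L^2$ bound on $w_\varepsilon$.
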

\begin{proof}
For fixed $K>0$  given in \eqref{1.6}, 
let
$$T^*:=\sup\bigg\{T_0\in(0,\infty)|\int_\Omega u_\varepsilon^{\frac{3}{2}}(\cdot,t)<M~~\mbox{for all}~~t\in(0,T_0)\bigg\},$$
where constant $M>0$ will be specified below. It is observed that from \eqref{1.6} and the continuity of $u_\varepsilon$, $T^*$ is well defined.
Supposed that $T^*<\infty$,  then
$$ \int_\Omega u_\varepsilon^{\frac{3}{2}}(\cdot,T^*)=M.$$

Let
$    a:=\frac{165}{61}(\frac{k+2}{\beta}-\frac{3}{5})$ and $\iota:=\frac{2}{3}\cdot\frac{\beta a}{\beta-k-2}.
$
Then \begin{equation*}
   a\in(0,1)~~\mbox{and}~~ \iota<1
\end{equation*}
can be warranted by
\begin{equation*}
k\in(-\frac{1}{6},-\frac{5}{66}) ~~~\hbox{and}~~
\beta\in(\beta_1(k),\beta_2(k)),
\end{equation*}
where $\beta_1(k):=\frac{171}{127}(k+2)$,  $\beta_2(k):=k+\frac{8}{3}$. In particular,  for given  $k_0:=-\frac 4 {33}$ and $\beta_0:=\frac{\beta_1(k_0)+\beta_2(k_0)} 2$,  we then have $ \beta_0>\frac 52$,
 \begin{equation}\label{3.8}
a_0:=\frac{165}{61}(\frac{k_0+2}{\beta_0}-\frac{3}{5})\in(0,1)~~~\hbox{and}~~~
      \iota_0:=\frac{2}{3}\cdot\frac{\beta_0 a_0}{\beta_0-k_0-2}<1.
 \end{equation}

As an application of Lemma \ref{lemma3.4} to $k:=k_0$ and $\beta:=\beta_0$, we have
\begin{equation}\label{3.9}
 \int_\Omega u_\varepsilon^{\beta_0}v_\varepsilon\leq
 2\int_{\Omega}u_\varepsilon^{k_0}v_\varepsilon|\nabla u_\varepsilon|^2+c_1\int_\Omega\frac{|\nabla v_\varepsilon|^{\frac{2\beta_0}{\beta_0-k_0-2}}}{v_\varepsilon^{\frac{2\beta_0}{\beta_0-k_0-2}-1}}
 +c_2\int_{\Omega}u_\varepsilon v_\varepsilon
        \end{equation}
for constants  $c_1$ and $c_2>0$.

 Multiplying the first equation of \eqref{2.5} by $u_\varepsilon^{\frac{1}{2}}$ and using Young's inequality, we obtain that
\begin{equation}\label{3.10}
    \begin{split}
        \frac{2}{3}\frac{d}{dt}\int_\Omega u_\varepsilon^{\frac{3}{2}}(\cdot,t)
        &\leq\int_\Omega\nabla\cdot(u_\varepsilon v_\varepsilon\nabla u_\varepsilon)\cdot u_\varepsilon^{\frac{1}{2}}-\int_\Omega\nabla\cdot(u_\varepsilon^2v_\varepsilon\nabla v_\varepsilon)\cdot u_\varepsilon^{\frac{1}{2}}+\ell\int_\Omega u_\varepsilon ^{\frac{1}{2}}v_\varepsilon w_\varepsilon\\
        &\leq-\frac12\int_\Omega u_\varepsilon^{\frac{1}{2}}v_\varepsilon|\nabla u_\varepsilon|^2+\frac12\int_\Omega u_\varepsilon^{\frac{3}{2}}v_\varepsilon\nabla u_\varepsilon\cdot\nabla v_\varepsilon+\frac{\ell}{3}\int_\Omega u_\varepsilon ^{\frac{3}{2}}v_\varepsilon+\frac{2\ell}{3}\int_\Omega v_\varepsilon w_\varepsilon^{\frac3 2}\\
        &\leq-\frac{1}{4}\int_\Omega u_\varepsilon^{\frac{1}{2}}v_\varepsilon|\nabla u_\varepsilon|^2+\frac{1}{4}\int_\Omega u_\varepsilon^{\frac{5}{2}}v_\varepsilon|\nabla v_\varepsilon|^2+\frac{\ell}{3}\int_\Omega u_\varepsilon ^{\frac{3}{2}}v_\varepsilon+\frac{2\ell}{3}\int_\Omega v_\varepsilon w_\varepsilon^{\frac3 2}
        \end{split}
\end{equation}
for all $ t<T^*$. Therefore, thanks to \eqref{2.12}, \eqref{2.14} with $\mu=\frac3 2$ and \eqref{3.9},  it follows from \eqref{3.10} that for $ t\in (0,T^*)$,
\begin{align}\label{3.11}
        ~&\frac{8}{3}\frac{d}{dt}\int_\Omega u_\varepsilon^{\frac{3}{2}}(\cdot,t)+\int_\Omega u_\varepsilon^{\frac{1}{2}}v_\varepsilon|\nabla u_\varepsilon|^2\nonumber\\
        &\leq\int_\Omega u_\varepsilon^{\frac{5}{2}}v_\varepsilon|\nabla v_\varepsilon|^2+\frac{4\ell}{3}\int_\Omega u_\varepsilon ^{\frac{3}{2}}v_\varepsilon+\frac{8\ell}{3}\int_\Omega v_\varepsilon w_\varepsilon^{\frac3 2}\nonumber\\
        &= \int_\Omega (u_\varepsilon^{\beta_0}v_\varepsilon)^{\frac{5}{2\beta_0}}\cdot(v_\varepsilon^{1-\frac{5}{2\beta_0}}|\nabla v_\varepsilon|^{2})+\frac{4\ell}{3}\int_\Omega u_\varepsilon ^{\frac{3}{2}}v_\varepsilon+\frac{8\ell}{3}\int_\Omega v_\varepsilon w_\varepsilon^{\frac3 2}\nonumber\\
        &\leq \int_\Omega u_\varepsilon^{\beta_0}v_\varepsilon+c_3(\beta_0)\int_\Omega v_\varepsilon|\nabla v_\varepsilon|^{\frac{4\beta_0}{2\beta_0-5}}+c_4(\ell,\beta_0)\int_\Omega u_\varepsilon v_\varepsilon+\frac{8\ell}{3}\int_\Omega v_\varepsilon w_\varepsilon^{\frac3 2}\nonumber\\
        &\leq 2\int_{\Omega}u_\varepsilon^{k_0}v_\varepsilon|\nabla u_\varepsilon|^2+
        c_1\int_\Omega\frac{|\nabla v_\varepsilon|^{\frac{2\beta_0}{\beta_0-k_0-2}}}
        {v_\varepsilon^{\frac{2\beta_0}{\beta_0-k_0-2}-1}}
                 +c_4\int_{\Omega}u_\varepsilon v_\varepsilon\\
        &~+c_3(\beta_0)\int_\Omega v_\varepsilon|\nabla v_\varepsilon|^{\frac{4\beta_0}{2\beta_0-5}}+\frac{8\ell}{3}\int_\Omega v_\varepsilon w_\varepsilon^{\frac3 2}\nonumber\\
       &\leq 32 \int_\Omega u_\varepsilon^{-\frac{3}{4}}v_\varepsilon|\nabla u_\varepsilon|^2+
       \frac 12 \int_\Omega u_\varepsilon^{\frac{1}{2}}v_\varepsilon|\nabla u_\varepsilon|^2+
        c_1\int_\Omega\frac{|\nabla v_\varepsilon|^{\frac{2\beta_0}{\beta_0-k_0-2}}}
        {v_\varepsilon^{\frac{2\beta_0}{\beta_0-k_0-2}-1}}\nonumber\\
           &~+c_3(\beta_0)\int_\Omega v_\varepsilon|\nabla v_\varepsilon|^{\frac{4\beta_0}{2\beta_0-5}}+c_5e^{-\kappa t}\nonumber
\end{align}
for some $c_i>0, (i=2,3,4,5)$, due to $\beta_0>\frac 52$ and $-\frac 34<k_0<\frac 12$.

To estimate the third term in the right-side of \eqref{3.11}, we  apply \eqref{2.12}, Lemma \ref{lemma2.7} with $q:=\frac{2\beta_0}{\beta_0-k_0-2}-2$, the Gagliardo-Nirenberg inequality, along with \eqref{2.14} to obtain
    \begin{align}\label{3.12}
       & \frac{d}{dt}\int_{\Omega}
       \frac{
       |\nabla v_\varepsilon|^{
       \frac{2\beta_0}{\beta_0-k_0-2}-2}
       }
       {v_\varepsilon^{
       \frac{2\beta_0}{\beta_0-k_0-2}
       -3}}+
       \gamma \int_{\Omega}\frac{|\nabla v_\varepsilon|^{
       \frac{2\beta_0}{\beta_0-k_0-2}
       }}{v_\varepsilon^{
       \frac{2\beta_0}{\beta_0-k_0-2}
      -1}}\nonumber\\
        &\leq  c_6\bigg(\int_{\Omega}w_\varepsilon^{\frac{\beta_0}{\beta_0-k_0-2}}v_\varepsilon+\int_\Omega v_\varepsilon\bigg)\nonumber\\
        &\leq c_6 \|v_0\|_{L^{\infty}(\Omega)}e^{-\kappa t}\big(\int_\Omega w_\varepsilon^{\frac{\beta_0}{\beta_0-k_0-2}}+|\Omega|\big)\\
        &\leq c_7e^{-\kappa t}\|\nabla w_\varepsilon\|_{L^{\frac{11}{4}}(\Omega)}^{\frac{\beta_0 a_0}{\beta_0-k_0-2}}
        \|w_\varepsilon\|_{L^{\frac{5}{2}}(\Omega)}
        ^{\frac{\beta_0(1-a_0)}{\beta_0-k_0-2}}
        +c_7e^{-\kappa t}\|w_\varepsilon\|_{L^2(\Omega)}^{\frac{\beta_0}{\beta_0-k_0+2}}+c_7e^{-\kappa t}\nonumber\\
            &\leq c_8e^{-\kappa t}(1+M^{\frac{2}{3}})^{\frac{\beta_0 a_0}{\beta_0-k_0-2}}+c_8e^{-\kappa t}\nonumber\\
            &\leq c_9 (1+M^{\iota_0})e^{-\kappa t},\nonumber
    \end{align}
    here $q>2$ is warranted by $\beta_0<\beta_2(k_0)<2k_0+4$.
Therefore, combining \eqref{3.11}--\eqref{3.12}, we have
\begin{align}
&~\frac{d}{dt}\bigg(\frac{8\gamma}{3c_1}\int_\Omega u_\varepsilon^{\frac{3}{2}}(\cdot,t)+\frac{
       |\nabla v_\varepsilon|^{
       \frac{2\beta_0}{\beta_0-k_0-2}-2}}{v_\varepsilon^{
       \frac{2\beta_0}{\beta_0-k_0-2}
       -3}}\bigg)+\frac{\gamma}{2c_1}\int_\Omega u_\varepsilon^{\frac{1}{2}}v_\varepsilon|\nabla u_\varepsilon|^2\\
       &\leq \frac{32\gamma}{c_1} \int_\Omega u_\varepsilon^{-\frac{3}{4}}v_\varepsilon|\nabla u_\varepsilon|^2+\frac{c_3(\beta_0)\gamma}{c_1}\int_\Omega v_\varepsilon|\nabla v_\varepsilon|^{\frac{4\beta_0}{2\beta_0-5}}+\frac{c_5\gamma}{c_1}e^{-\kappa t}+c_9 (1+M^{\iota_0})e^{-\kappa t},\nonumber
\end{align}
which, upon integration and using \eqref{3.4} and Lemma \ref{2.6}, shows that
\begin{align}\label{3.14}
       \int_\Omega u_\varepsilon^{\frac{3}{2}}(\cdot,t)&\leq 12\int_0^t\int_{\Omega}u_\varepsilon^{-\frac{3}{4}}v_\varepsilon|\nabla u_\varepsilon|^2\nonumber+c_3(\beta_0)\int_0^t\int_\Omega v_\varepsilon|\nabla v_\varepsilon|^{\frac{4\beta_0}{2\beta_0-5}}+c_5\int_0^t e^{-\kappa t}\\
       &~~+c_9 (1+M^{\iota_0})\int_0^t e^{-\kappa t}+\int_\Omega u_0^{\frac{3}{2}}+\frac{3c_1}{8\gamma}
       \int_\Omega\frac{|\nabla v_0|^{\frac{2\beta_0}{\beta_0-k_0-2}-2}}
       {v_0^{\frac{2\beta_0}{\beta_0-k_0-2}-3}}\\
       &\leq c_{10}(K)(1+M^{\frac{5}{6}})+c_{11}(1+ M^{\iota_0})\int_0^{t}e^{-\kappa s}+c_{12}\int_0^{t}e^{-\kappa s}\nonumber\\
       &~~+\int_\Omega u_0^{\frac{3}{2}}
       +\frac{3c_1}{8\gamma}\int_\Omega\frac{|\nabla v_0|^{\frac{2\beta_0}{\beta_0-k_0-2}-2}}
       {v_0^{\frac{2\beta_0}{\beta_0-k_0-2}-3}}\nonumber\\
       &\leq C(K)(1+M^{\tau})\nonumber
\end{align}
where $\tau:=\max\{\frac{5}{6},\iota_0\}<1$.

Now at this position, we take  $M:=(2C(K)+1
)^{\frac{1}{1-\tau}}$, and then have
\begin{equation}\label{3.15}
    \int_\Omega u_\varepsilon^{\frac{3}{2}}(\cdot,t)\leq C(K)(1+ M^{\tau})\leq 2C(K)\cdot M^{\tau}\leq M-\frac 12
\end{equation}
for all $t\in(0,T^*)$. Due to the continuity of $u_\varepsilon$, \eqref{3.15} contradicts with the definition of $T^*$. Hence we have $T^*=\infty$.
\end{proof}

With Lemma \ref{lemma3.5} established, we now aim to derive bounds for $u_\varepsilon$ in all the  $L^p$ spaces. To this end,
we extend the exponent range of $\beta$ in Lemma \ref{lemma3.4} from $[k+2, k+\frac 8 3)$ to $[1, p+2)$,
under the assumption that a bound for  $\int_{\Omega}\varphi^{\frac{3}{2}}$ exists.
\begin{lemma}\label{lemma3.6}
Let $\Omega\subset\mathbb{R}^3$, $L>0$, $p>1$, $q>4$, and $\beta\in [1,p+2)$. Then for any $\eta\in(0,1)$, there exists $C=C(\eta,L,p,\beta
 )>0$ such that for all  $\varphi\in C^1(\overline \Omega)$ and $\psi \in C^1(\overline \Omega)$ fulfilling $\varphi> 0$ and $\psi>0$ in $\overline \Omega$ and $\int_{\Omega}\varphi^{\frac{3}{2}}\leq L$, we have
  \begin{align}\label{3.16}
    \int_{\Omega}\varphi^{\beta} \psi\leq \eta \int_{\Omega}\varphi^{p-1}\psi|\nabla
    \varphi|^2+\eta \int_{\Omega}\varphi\frac{|\nabla  \psi|^q}{{\psi}^{q-1}}+C\int_{\Omega}\varphi \psi.
\end{align}
\end{lemma}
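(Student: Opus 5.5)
The plan is to split $\varphi^\beta\psi$ as a square of $\varphi^{\frac{p+1}{2}}\sqrt{\psi}$ times a power of $\varphi$ that the $L^{\frac32}$ bound controls. Lemma \ref{lemma3.1} then turns that square into the dissipation term, a $\nabla\psi$ term, and a lower-order term. The $\nabla\psi$ term is finally absorbed by Young's inequality into $\eta\int_\Omega\varphi\frac{|\nabla\psi|^q}{\psi^{q-1}}$ plus a small multiple of the left-hand side.

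\textbf{Step 1 (reduction to $\beta$ close to $p+2$).} Since $\varphi^\beta\le \delta\varphi^{\beta'}+C(\delta)\varphi$ for $1\le\beta<\beta'$, it suffices to treat $\beta\in(p+1,p+2)$. Any smaller $\beta$ is handled by passing to such a $\beta'$ and absorbing.

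\textbf{Step 2 (main interpolation).} Write
\[
\int_\Omega\varphi^\beta\psi=\int_\Omega\big(\varphi^{\frac{p+1}{2}}\psi^{\frac12}\big)^2\varphi^{\beta-p-1}
\le \big\|\varphi^{\frac{p+1}{2}}\psi^{\frac12}\big\|_{L^{2s'}(\Omega)}^2\,\big\|\varphi^{\beta-p-1}\big\|_{L^{s}(\Omega)},
\]
with $s:=\frac{3}{2(\beta-p-1)}$, so that $\|\varphi^{\beta-p-1}\|_{L^s(\Omega)}\le L^{\frac{2(\beta-p-1)}{3}}$. The condition $\beta<p+2$ is exactly what gives $s>\frac32$, hence $s'<3$ and $r:=2s'<6$. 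Lemma \ref{lemma3.1} with this $r$ and a small $\eta_1$ then yields
\[
\int_\Omega\varphi^\beta\psi\le \eta\int_\Omega\varphi^{p-1}\psi|\nabla\varphi|^2+\eta_2\int_\Omega\varphi^{p+1}\frac{|\nabla\psi|^2}{\psi}+C\Big\{\int_\Omega\varphi\Big\}^p\int_\Omega\varphi\psi .
\]
Here $\int_\Omega\varphi\le |\Omega|^{\frac13}L^{\frac23}$, so the last term is of the admissible form $C\int_\Omega\varphi\psi$.

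\textbf{Step 3 (the $\nabla\psi$ term: main obstacle).} Set $X:=\varphi|\nabla\psi|^q\psi^{1-q}$. One checks that
\[
\varphi^{p+1}\frac{|\nabla\psi|^2}{\psi}=(\varphi^{\beta}\psi)^{a}(\varphi\psi)^{c}X^{\frac2q},\qquad a+c+\tfrac2q=1,\quad a\beta+c+\tfrac2q=p+1 .
\]
The powers of $\psi$ and of $|\nabla\psi|$ match automatically. This forces $a=\frac{p}{\beta-1}$, and Young's inequality with three factors gives $\eta_2\int_\Omega\varphi^{p+1}\frac{|\nabla\psi|^2}{\psi}\le\frac12\int_\Omega\varphi^\beta\psi+\eta\int_\Omega X+C\int_\Omega\varphi\psi$, provided $c\ge0$, i.e.\ $\frac{p}{\beta-1}\le 1-\frac2q$.

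This inequality is where $q>4$ and $\beta$ near $p+2$ must cooperate. It holds directly when $q$ is large relative to $p$. In the general case I would first try to use the $L^{\frac32}$ bound once more instead of the crude split, namely Hölder on
\[
\int_\Omega Y^{1-\frac2q}\varphi^{\frac{2p}{q}}X^{\frac2q},\qquad Y:=\varphi^{p+1}\psi .
\]
Young's inequality gives $\delta\int_\Omega X+C\int_\Omega Y\varphi^{\frac{2p}{q-2}}$. The last integral is then fed back into Step 2, which works whenever the extra power $\frac{2p}{q-2}$ is $<1$. If needed, this is combined with a finite bootstrap in which $\beta$ is raised in steps.

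I expect verifying this exponent bookkeeping for all $p>1$, $q>4$ to be the delicate point of the proof. Once it is settled, absorbing $\frac12\int_\Omega\varphi^\beta\psi$ into the left-hand side and renaming $\eta$ yields \eqref{3.16}.
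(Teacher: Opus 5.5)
\textbf{Where your argument works.} Your Steps 1--3 are the paper's argument. You use Hölder against $\|\varphi\|_{L^{3/2}(\Omega)}^{\beta-p-1}$ and Lemma~\ref{lemma3.1} with $r<6$. You then use Young's inequality to trade $\varphi^{p+1}\frac{|\nabla\psi|^2}{\psi}$ for $\varphi\frac{|\nabla\psi|^q}{\psi^{q-1}}$ and $\varphi^{\frac{qp+q-2}{q-2}}\psi$. The paper does a two-factor Young and then uses $\varphi^{\frac{qp+q-2}{q-2}}\le\varphi^\beta+\varphi$; your three-factor version is the same step. Absorption and the reduction of smaller $\beta$ follow as in the paper. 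Your condition $c\ge 0$ is exactly $\beta\ge\frac{qp+q-2}{q-2}=p+1+\frac{2p}{q-2}$. Together with $\beta<p+2$, the argument closes precisely when $q>2p+2$, and in that range your proof is complete. The paper's proof has the same unstated restriction: its first case $\beta\in[\frac{qp+q-2}{q-2},p+2)$ is empty for $q\le 2p+2$, and its treatment of smaller $\beta$ rests on that case. Lemma~\ref{lemma3.7} indeed selects $q>2p+2$.

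\textbf{The gap.} Your plan for $4<q\le 2p+2$ cannot be closed by any bookkeeping. Your rerouted Young step regenerates $\varphi^{p+1+\frac{2p}{q-2}}\psi$, whose exponent does not depend on $\beta$ and is $\ge p+2$. Step~2 only reaches exponents below $p+2$; at $p+2$ you would need $r=6$ in Lemma~\ref{lemma3.1}. Raising $\beta$ stepwise never moves this cap.

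In fact \eqref{3.16} is false in that range. Take $x_0\in\Omega$, small $r>0$, $H:=cr^{-2}$ with $c$ small, and $\delta:=r^N$ with $N$ large.
Let $\varphi\equiv H$ on $B_r(x_0)$, decaying smoothly (with $|\nabla\varphi|\lesssim H/r$) to a tiny constant outside $B_{2r}(x_0)$.
Let $\psi^{1/q}\equiv 1$ on $B_{r/2}(x_0)$, decreasing smoothly to $\delta^{1/q}$ with $|\nabla\psi^{1/q}|\lesssim r^{-1}$, and $\psi\equiv\delta$ outside $B_r(x_0)$.

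Then $\int_\Omega\varphi^{3/2}\le L$. Since $\nabla\varphi$ lives only where $\psi=\delta$, the first term on the right of \eqref{3.16} is $O(\delta H^{p+1}r)=o(1)$. Using $\frac{|\nabla\psi|^q}{\psi^{q-1}}=q^q|\nabla\psi^{1/q}|^q$, the remaining terms satisfy
\[
\int_\Omega\varphi\frac{|\nabla\psi|^q}{\psi^{q-1}}\lesssim Hr^{3-q}\sim r^{1-q},\qquad \int_\Omega\varphi\psi\lesssim r,\qquad \int_\Omega\varphi^{\beta}\psi\gtrsim H^{\beta}r^{3}\sim r^{3-2\beta}.
\]
For $\beta>\frac{q+2}{2}$ one has $3-2\beta<1-q$, so the left side dominates as $r\to0$, whatever $\eta$ and $C$ are. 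Such $\beta<p+2$ exist exactly when $q<2p+2$, e.g.\ $p=3$, $q=5$, $\beta=\frac92$.

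So the correct repair is to add the hypothesis $q>2p+2$, which is all Lemma~\ref{lemma3.7} needs, rather than to look for a bootstrap.
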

\begin{proof}
Firstly, we verify in the case
\begin{equation}\label{3.17}
\beta \in \left[\frac{qp+q-2}{q-2},p+2\right).
\end{equation}
Let
$$\rho:=\frac{3}{3-2(\beta+p+1)}\quad and \quad \rho_*:=\frac{3}{2(\beta-p-1)}.$$
Due to \eqref{3.17}, it follows that
    \begin{align*}
        \rho\geq1\quad and\quad 1<2\rho<6.
    \end{align*}
Thanks to  $\int_{\Omega}\varphi^{\frac{3}{2}}\leq L$, the H\"{o}lder inequality, the Young inequality and Lemma \ref{lemma3.1},  we deduce that for any $\eta>0$, there exists
positive constant  $c_1=c_1(\eta,p,\beta,L)$  such that
\begin{align*}
    \int_{\Omega}\varphi^{\beta}\psi&=\int_{\Omega}(\varphi^{\frac{p+1}{2}}\psi^{\frac{1}{2}})^2\varphi^{\beta-p-1}\nonumber
    \\
    &\leq\|\varphi^{\frac{p+1}{2}}\psi^{\frac{1}{2}}\|^2_{L^{2\rho}(\Omega)}\cdot\|\varphi^{\beta-p-1}\|_{L^{\rho_*}(\Omega)}\nonumber
    \\
    &=\|\varphi^{\frac{p+1}{2}}\psi^{\frac{1}{2}}\|^2_{L^{2\rho}(\Omega)}
    \cdot\|\varphi\|_{L^{\frac3 2}(\Omega)}^{\beta-p-1}\nonumber
    \\
    &\leq L^{\frac {2\beta-2p-2}{3}}\|\varphi^{\frac{p+1}{2}}\psi^{\frac{1}{2}}\|^2_{L^{2\rho}(\Omega)}\nonumber
    \\
    &\leq \frac{\eta}{2}\int_{\Omega}\varphi^{p-1}\psi|\nabla
    \varphi|^2+\frac{\eta}{2}\int_{\Omega}\varphi^{p+1}\frac{|\nabla
    \psi|^2}{\psi}+c_1\int_{\Omega}\varphi\psi\nonumber
    \\
    &\leq \frac{\eta}{2}\int_{\Omega}\varphi^{p-1}\psi|\nabla
    \varphi|^2+\frac{\eta}{2}\int_{\Omega}\varphi\frac{|\nabla
    \psi|^q}{\psi^{q-1}}+\frac 12\int_{\Omega}\varphi^{\frac{qp+q-2}{q-2}}\psi+c_1\int_{\Omega}\varphi\psi\nonumber
     \\
    &\leq \frac{\eta}{2}\int_{\Omega}\varphi^{p-1}\psi|\nabla
    \varphi|^2+\frac{\eta}{2}\int_{\Omega}\varphi\frac{|\nabla
    \psi|^q}{\psi^{q-1}}+
    \frac{1}{2}\int_{\Omega}\varphi^{\beta}\psi+
    (c_1+1)\int_{\Omega}\varphi\psi
\end{align*}
 and thus
\begin{align}\label{3.18}
 \int_{\Omega}\varphi^{\beta}\psi\leq \eta\int_{\Omega}\varphi^{p-1}\psi|\nabla
\varphi|^2+\eta\int_{\Omega}\varphi\frac{|\nabla
\psi|^q}{\psi^{q-1}}+2 (c_1+1) \int_{\Omega}\varphi\psi.
\end{align}
Furthermore, it is observed that  for $\beta \in [1, \frac{qp+q-2}{q-2})$, we have $$\int_{\Omega}\varphi^{\beta}\psi\leq
    \int_{\Omega}\varphi^{\frac{qp+q-2}{q-2}}\psi+\int_{\Omega}\varphi\psi, $$
which along with \eqref{3.18}  completes the proof readily.
\end{proof}
Combining the improved integrability from Lemma \ref{lemma3.6} with the weighted gradient estimates in Lemma \ref{lemma2.7}, one can  establish $L^p$-estimates for $u_\varepsilon$ for any $p\geq2$ by means of the testing-based argument.
\begin{lemma}\label{lemma3.7}
Let $\Omega\subset\mathbb{R}^3$ and $p\geq2$. There exists $C=C(p)>0$ such that
    \begin{equation}\label{3.19}
        \begin{split}
          \int_\Omega u_\varepsilon^p\leq C,
        \end{split}
    \end{equation}
    and
    \begin{equation}\label{3.20}
    \int_0^\infty\int_\Omega u_{\varepsilon}^{p-1}v_{\varepsilon}|\nabla u_{\varepsilon}|^2\leq C,
    \end{equation}
    as well as
    \begin{equation}\label{3.21}
    \int_0^{\infty}\int_\Omega u_\varepsilon^{p+1}v_\varepsilon\leq C
    \end{equation} for all $\varepsilon>0$.
\end{lemma}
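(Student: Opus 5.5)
We test the first equation in \eqref{2.5} by $u_\varepsilon^{p-1}$, as in \eqref{2.27}. Using Young's inequality on the cross-diffusive term, this gives for all $t>0$
\begin{equation*}
\frac{1}{p}\frac{d}{dt}\int_\Omega u_\varepsilon^p+\frac{p-1}{2}\int_\Omega u_\varepsilon^{p-1}v_\varepsilon|\nabla u_\varepsilon|^2
\leq \frac{p-1}{2}\int_\Omega u_\varepsilon^{p+1}v_\varepsilon|\nabla v_\varepsilon|^2+\ell\int_\Omega u_\varepsilon^{p-1}v_\varepsilon w_\varepsilon .
\end{equation*}

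\textbf{Step 1: the taxis term.} Fix $q>4$. By Young's inequality with exponents $\frac{q}{q-2}$ and $\frac q2$, applied to the factors $u_\varepsilon^{p+1-\frac{2}{q}}v_\varepsilon^{1-\frac{2}{q}}$ and $u_\varepsilon^{\frac 2q}v_\varepsilon^{\frac 2q}|\nabla v_\varepsilon|^2$, we get
\begin{equation*}
\int_\Omega u_\varepsilon^{p+1}v_\varepsilon|\nabla v_\varepsilon|^2\leq\int_\Omega u_\varepsilon^{\beta}v_\varepsilon+c\int_\Omega u_\varepsilon v_\varepsilon|\nabla v_\varepsilon|^q ,
\qquad \beta:=\frac{q(p+1)-2}{q-2}<p+2 .
\end{equation*}
Since $\|v_\varepsilon\|_{L^\infty(\Omega)}\leq\|v_0\|_{L^\infty(\Omega)}$, we have $u_\varepsilon v_\varepsilon|\nabla v_\varepsilon|^q\le \|v_0\|_{L^\infty(\Omega)}^{q}\,u_\varepsilon \frac{|\nabla v_\varepsilon|^q}{v_\varepsilon^{q-1}}$.

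We then apply Lemma \ref{lemma3.6} with $\varphi=u_\varepsilon$, $\psi=v_\varepsilon$ and $L=M$ from Lemma \ref{lemma3.5}. With small $\eta$, this bounds $\int_\Omega u_\varepsilon^\beta v_\varepsilon$ by
\begin{equation*}
\eta\int_\Omega u_\varepsilon^{p-1}v_\varepsilon|\nabla u_\varepsilon|^2+\eta\int_\Omega u_\varepsilon\frac{|\nabla v_\varepsilon|^q}{v_\varepsilon^{q-1}}+C\int_\Omega u_\varepsilon v_\varepsilon .
\end{equation*}
The same lemma with $\beta=p$ and $\beta=p+1$ also controls the source term, since $\ell\int_\Omega u_\varepsilon^{p-1}v_\varepsilon w_\varepsilon\le \ell\int_\Omega u_\varepsilon^pv_\varepsilon+\ell\int_\Omega v_\varepsilon w_\varepsilon^p$. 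It also lets us add $\int_\Omega u_\varepsilon^{p+1}v_\varepsilon$ to the left-hand side, which is needed for \eqref{3.21}.

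\textbf{Step 2: the mixed term $\int_\Omega u_\varepsilon |\nabla v_\varepsilon|^q v_\varepsilon^{1-q}$ (main obstacle).} By Lemma \ref{lemma3.5} and Lemma \ref{lemma2.4}, $w_\varepsilon$ is bounded in $L^\mu(\Omega)$ for every $\mu<\infty$, uniformly in time. We split
\begin{equation*}
u_\varepsilon\frac{|\nabla v_\varepsilon|^q}{v_\varepsilon^{q-1}}=\Big(u_\varepsilon^{\frac{q+2}{2}}v_\varepsilon\Big)^{\frac{2}{q+2}}\cdot\Big(\frac{|\nabla v_\varepsilon|^{q+2}}{v_\varepsilon^{q+1}}\Big)^{\frac{q}{q+2}} ,
\end{equation*}
and Young's inequality bounds it by $\delta\int_\Omega \frac{|\nabla v_\varepsilon|^{q+2}}{v_\varepsilon^{q+1}}+c_\delta\int_\Omega u_\varepsilon^{\frac{q+2}2}v_\varepsilon$. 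The last integral is of the form covered by Lemma \ref{lemma3.6} provided $\frac{q+2}{2}<p+2$, i.e.\ $q<2p+2$. Since $p\ge2$ allows $q\in(4,2p+2)$, we choose such a $q$. We then add a multiple of the functional $\int_\Omega v_\varepsilon^{1-q}|\nabla v_\varepsilon|^q$ of Lemma \ref{lemma2.7}, whose dissipation $\gamma(q)\int_\Omega v_\varepsilon^{-q-1}|\nabla v_\varepsilon|^{q+2}$ absorbs the $\delta$-term. Its right-hand side is $C(q)\big(\int_\Omega w_\varepsilon^{\frac{q+2}{2}}v_\varepsilon+\int_\Omega v_\varepsilon\big)\le c\,e^{-\kappa t}$ by Lemma \ref{lemma2.3} and the $L^\mu$ bound on $w_\varepsilon$. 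The coupling constants must be ordered carefully: the prefactor of the Lemma \ref{lemma2.7} functional is fixed first, then $\eta$ and $\delta$ are chosen small.

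\textbf{Step 3: closing.} Every remaining term of the form $C\int_\Omega u_\varepsilon v_\varepsilon$ or $\int_\Omega v_\varepsilon w_\varepsilon^p$ is bounded by $c\,e^{-\kappa t}$, using $\int_\Omega u_\varepsilon\le m$, Lemma \ref{lemma2.3} and the $L^\mu$ bound on $w_\varepsilon$. We arrive at
\begin{equation*}
\frac{d}{dt}y(t)+c\int_\Omega u_\varepsilon^{p-1}v_\varepsilon|\nabla u_\varepsilon|^2+c\int_\Omega u_\varepsilon^{p+1}v_\varepsilon\le c\,e^{-\kappa t},
\qquad y(t):=\int_\Omega u_\varepsilon^p+A\int_\Omega v_\varepsilon^{1-q}|\nabla v_\varepsilon|^q .
\end{equation*}
Integrating over $(0,\infty)$, with $y(0)$ bounded in terms of $K$ since $v_0>0$ on $\overline\Omega$ and $v_0\in W^{1,\infty}(\Omega)$, yields \eqref{3.19}, \eqref{3.20} and \eqref{3.21} at once. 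The delicate point is Step 2: the exponent of $u_\varepsilon$ produced by the Young splitting must stay below $p+2$ while $q>4$ is kept for Lemma \ref{lemma3.6}. This is exactly where the restriction $p\ge2$ is used; the case $p\in(1,2)$ then follows by interpolation.
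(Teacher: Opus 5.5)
There is a genuine gap: Steps 1 and 2 impose incompatible conditions on $q$. Write $\beta=\frac{q(p+1)-2}{q-2}=p+1+\frac{2p}{q-2}$. Then $\beta<p+2$ holds if and only if $q>2p+2$. In Step 2, however, you need $\frac{q+2}{2}<p+2$, i.e.\ $q<2p+2$, and you explicitly choose $q\in(4,2p+2)$. For such $q$ the exponent from Step 1 exceeds $p+2$; for instance $p=2$, $q=5$ gives $\beta=\frac{13}{3}>4$. So Lemma \ref{lemma3.6} does not apply to $\int_\Omega u_\varepsilon^\beta v_\varepsilon$, and the taxis term is not controlled. If you instead take $q>2p+2$ to rescue Step 1, then $\frac{q+2}{2}>p+2$ and Step 2 fails. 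Moreover, the proof of Lemma \ref{lemma3.6} itself needs $\frac{qp+q-2}{q-2}<p+2$, i.e.\ $q>2p+2$; that is why the paper insists on $q>2p+2$. So your absorption loop cannot close for any $p$, and the role you attribute to $p\geq 2$ is not the real issue.

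The cause is that you absorb the mixed term with the Lemma \ref{lemma2.7} functional of the same exponent $q$. Keep $q>2p+2$ and split the mixed term with exponents $p$ and $\frac{p}{p-1}$ instead:
\[
\int_\Omega u_\varepsilon\frac{|\nabla v_\varepsilon|^q}{v_\varepsilon^{q-1}}\le\int_\Omega u_\varepsilon^p v_\varepsilon+c\int_\Omega\frac{|\nabla v_\varepsilon|^{Q}}{v_\varepsilon^{Q-1}},\qquad Q:=\frac{pq}{p-1}.
\]
The last integral has finite time integral by Lemma \ref{lemma2.7} applied with exponent $Q-2$. Its right-hand side is $\le ce^{-\kappa t}$ by Lemma \ref{lemma2.3} and the uniform $L^\mu$ bounds on $w_\varepsilon$.

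The paper then bounds $\int_\Omega u_\varepsilon^pv_\varepsilon\le\|v_\varepsilon\|_{L^\infty(\Omega)}\int_\Omega u_\varepsilon^p\le ce^{-\kappa t}\int_\Omega u_\varepsilon^p$. It closes with a Gronwall argument whose coefficient $ce^{-\kappa t}$ is integrable.

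Alternatively, you can keep your pure absorption scheme, in three moves:
\begin{itemize}
\item apply Lemma \ref{lemma3.6} with $\beta=p$ (and the same $q>2p+2$) to $\int_\Omega u_\varepsilon^pv_\varepsilon$;
\item absorb the returning term $\eta\int_\Omega u_\varepsilon|\nabla v_\varepsilon|^qv_\varepsilon^{1-q}$;
\item add a large multiple of $\int_\Omega v_\varepsilon^{3-Q}|\nabla v_\varepsilon|^{Q-2}$, rather than of $\int_\Omega v_\varepsilon^{1-q}|\nabla v_\varepsilon|^{q}$, to $y$.
\end{itemize}
Either way, \eqref{3.21} then follows from Lemma \ref{lemma3.6} with $\beta=p+1$, as you indicate.
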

\begin{proof}
According to Lemma \ref{lemma3.5} and the Neumann heat semigroup estimates in Lemma \ref{lemma2.4}, we can obtain
\begin{equation}\label{3.22}
\|w_\varepsilon(\cdot,t)\|_{L^\rho(\Omega)}\leq c_1~~\mbox{for all}~~\rho\in(0,\infty),~~t\in(0,\infty).
\end{equation}
Multiplying the first equation of \eqref{2.5} by $u_\varepsilon^{p-1}$, integrating by parts, using Young's inequality and \eqref{3.22}, we obtain
    \begin{align}\label{3.23}
           &~\frac{1}{p}\frac{d}{dt}\int_\Omega u_\varepsilon^p\nonumber\\
           &=\int_\Omega u_\varepsilon^{p-1}\{\nabla\cdot(u_\varepsilon v_\varepsilon\nabla u_\varepsilon)-\nabla\cdot(u_\varepsilon^2v_\varepsilon\nabla v_\varepsilon)\}+\ell\int_\Omega u_\varepsilon^{p-1} v_\varepsilon w_\varepsilon\nonumber\\
           &=-(p-1)\int_\Omega u_\varepsilon^{p-1}v_\varepsilon|\nabla u_\varepsilon|^2+(p-1)\int_\Omega u_\varepsilon^pv_\varepsilon\nabla u_\varepsilon\cdot\nabla v_\varepsilon+\ell\int_\Omega u_\varepsilon^{p-1} v_\varepsilon w_\varepsilon\nonumber\\
           &\leq-\frac{p-1}{2}\int_\Omega u_\varepsilon^{p-1}v_\varepsilon|\nabla u_\varepsilon|^2+\frac{(p-1)}{2}\int_\Omega u_\varepsilon^{p+1}v_\varepsilon|\nabla v_\varepsilon|^2+\frac{\ell(p-1)}{p}\int_\Omega u_\varepsilon^p v_\varepsilon+\frac{\ell}{p}\int_\Omega v_\varepsilon w_\varepsilon^{p}\\
           &\leq -\frac{p-1}{2}\int_\Omega u_\varepsilon^{p-1}v_\varepsilon|\nabla u_\varepsilon|^2+c_2 \int_{\Omega}u_{\varepsilon}\frac{|\nabla v_{\varepsilon}|^q}{v_{\varepsilon}^{q-1}}+c_3\int_{\Omega}u_{\varepsilon}^{\frac{qp+q-2}{q-2}}v_{\varepsilon}^{\frac{3q-2}{q-2}}\nonumber\\
           &~~+\frac{\ell(p-1)}{p}\int_\Omega u_\varepsilon^p v_\varepsilon+\frac{\ell e^{2\kappa}\|v_0\|_{L^\infty(\Omega)}e^{-\kappa t}}{p}\int_\Omega w_\varepsilon^p\nonumber
    \end{align}
    for all $t\in(0,\infty)$. Noting that for any  fixed $p>1$, one can find $q>2p+2$  such that
\begin{align*}
    \frac{qp+q-2}{q-2}<p+2,
\end{align*}
which along with \eqref{3.16} yields
\begin{align}\label{3.24}
\begin{split}
&c_3\int_{\Omega}u_{\varepsilon}^{\frac{qp+q-2}{q-2}}v_{\varepsilon}^{\frac{3q-2}{q-2}}\\
   &\leq c_3\|v_0\|_{L^\infty(\Omega)}^{\frac{2q}{q-2}}\int_{\Omega}u_{\varepsilon}^{\frac{qp+q-2}{q-2}}v_{\varepsilon}
   \\
   &\leq\eta\int_{\Omega}u_{\varepsilon}^{p-1}v_{\varepsilon}|\nabla u_{\varepsilon}|^2+\eta\int_{\Omega}u_{\varepsilon}\frac{|\nabla
   v_{\varepsilon}|^q}{v_{\varepsilon}^{q-1}}+c_4(\eta,p,\|v_0\|_{L^\infty(\Omega)})\int_{\Omega}u_{\varepsilon}v_{\varepsilon}
\end{split}
\end{align}
for all $t\in (0,\infty)$.
Combining  \eqref{3.23} with \eqref{3.24} and applying \eqref{2.12}, we then have
    \begin{align}
             &\frac{1}{p}\frac{d}{dt}\int_\Omega u_\varepsilon^p+\frac{p-1}{4}\int_\Omega u_\varepsilon^{p-1}v_\varepsilon|\nabla u_\varepsilon|^2\nonumber\\
             &\leq (c_2+\eta) \int_{\Omega}u_{\varepsilon}\frac{|\nabla v_{\varepsilon}|^q}{v_{\varepsilon}^{q-1}}
             +c_4(\eta,p,\|v_0\|_{L^\infty(\Omega)})\int_{\Omega}u_{\varepsilon}v_{\varepsilon}+\ell\int_\Omega u_\varepsilon^p v_\varepsilon+\ell c_1\|v_0\|_{L^\infty(\Omega)}e^{-\kappa t}\nonumber\\
             &\leq c_5\int_\Omega\frac{|\nabla v_\varepsilon|^{\frac{pq}{p-1}}}{v_\varepsilon^{\frac{pq}{p-1}-1}}+(c_6+\ell)\int_\Omega u_\varepsilon^p v_\varepsilon+c_7(\ell,K)e^{-\kappa t}\\
             &\leq c_5\int_\Omega\frac{|\nabla v_\varepsilon|^{\frac{pq}{p-1}}}{v_\varepsilon^{\frac{pq}{p-1}-1}}+c_8(\ell,K)
             e^{-\kappa t}(\int_\Omega u_\varepsilon^p +1).\nonumber
    \end{align}
Therefore we infer that
$$y_{\varepsilon}(t):=\int_\Omega u_\varepsilon^p(\cdot,t)+1, \quad g_{\varepsilon}(t) :=\frac{p(p-1)}{4}\int_\Omega u_\varepsilon^{p-1}v_\varepsilon|\nabla u_\varepsilon|^2$$
 and $$ f_{\varepsilon}(t):= pc_8(\ell,K)e^{-\kappa t}\quad\hbox{as well as}\quad h_{\varepsilon}(t):=
 c_5p\int_\Omega\frac{|\nabla v_\varepsilon|^{\frac{pq}{p-1}}}{v_\varepsilon^{\frac{pq}{p-1}-1}}
$$ satisfies
$$  y'_{\varepsilon}(t)+ g_{\varepsilon}(t) \leq f_{\varepsilon}(t)y_{\varepsilon}(t) +h_{\varepsilon}(t) \quad\quad\quad \hbox{for all}\,\, t>0,
$$
which  upon a first integration shows that
\begin{equation}
\begin{split}y_{\varepsilon}(t)& \leq  y_{\varepsilon}(0)\cdot e^{\int^t_0 f_{\varepsilon}(s)ds}+\int^t_0
e^{\int^t_s f_{\varepsilon}(\sigma)d\sigma} h_{\varepsilon}(s)ds\\
&\leq y_{\varepsilon}(0)e^{ p\kappa^{-1}c_8}+e^{ p\kappa^{-1}c_8}\int^t_0
 h_{\varepsilon}(s)ds.
\end{split}
\end{equation}
Now  from Lemma \ref{lemma2.7} and Lemma \ref{lemma2.4}, we find $c_9>0$ such that
$$ \int^t_0
 h_{\varepsilon}(s)ds\leq c_9 \quad\quad\quad \hbox{for all}\,\, t>0 \,\hbox{ and}\,\, \varepsilon>0,
$$
which implies that
\begin{equation}\label{3.26}
    \int_\Omega u_\varepsilon^p(\cdot,t)+\int_0^{t}\int_\Omega u_\varepsilon^{p-1}v_\varepsilon|\nabla u_\varepsilon|^2\leq c_{10}
\end{equation}
for some $c_{10}>0$ and thereby
\eqref{3.21} results from Lemma \ref{lemma3.6}, Lemma \ref{lemma2.7} and \eqref{3.26}.
\end{proof}

The following lemma extends the two-dimensional result of \cite{WinklerEven} to that  in  three-dimensional cases, and thereby enabling us to establish the $L^{\infty}$-estimate for $u_\varepsilon$.
\begin{lemma}\label{lemma3.8}
Let $\Omega\subset\mathbb{R}^3$ be a smoothly bounded domain. Then there exist $\gamma>0$ and $C>0$ such that for any $p>4$ and $\eta\in(0,1]$, and for all $\varphi\in C(\overline{\Omega})$ and $\psi\in C(\overline{\Omega})$ fulfilling $\varphi>0$ and $\psi>0$ in $\overline{\Omega}$, we have
\begin{equation*}
\begin{split}
\int_\Omega \varphi^{p+1}\psi\leq\eta\int_\Omega\varphi^{p-1}\psi|\nabla\varphi|^2+\eta\bigg\{\int_\Omega\frac{|\nabla\psi|^8}{\psi^7}\bigg\}\cdot\bigg\{\int_\Omega\varphi^{\frac{p}{2}}\bigg\}^{\frac{2(p+1)} {p}}
+C\eta^{-\gamma}p^{2\gamma}\bigg\{\int_\Omega\varphi\psi\bigg\}\cdot\bigg\{\int_\Omega\varphi^{\frac{p}{2}}\bigg\}^{2}.
\end{split}
\end{equation*}
\end{lemma}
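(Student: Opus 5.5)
The plan is to run everything through the single function $z:=\varphi^{\frac{p+1}{2}}\psi^{\frac12}$, for which $\int_\Omega\varphi^{p+1}\psi=\|z\|_{L^2(\Omega)}^2$, and to use a Gagliardo--Nirenberg inequality whose lower-order norm is the quasi-norm $\|z\|_{L^{2/3}(\Omega)}$. This choice is dictated by the last term of the claim. By H\"older's inequality with exponents $3$ and $\frac32$,
\begin{equation*}
\int_\Omega z^{\frac23}=\int_\Omega(\varphi\psi)^{\frac13}\varphi^{\frac p3}\le\Big\{\int_\Omega\varphi\psi\Big\}^{\frac13}\Big\{\int_\Omega\varphi^{\frac p2}\Big\}^{\frac23},
\end{equation*}
so that $\|z\|_{L^{2/3}(\Omega)}^2\le\{\int_\Omega\varphi\psi\}\cdot\{\int_\Omega\varphi^{\frac p2}\}^2$. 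This is exactly the structure of the third term on the right.

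First I apply the three-dimensional Gagliardo--Nirenberg inequality, which remains valid for lower exponents below $1$. It gives
\begin{equation*}
\|z\|_{L^2(\Omega)}^2\le c_1\|\nabla z\|_{L^2(\Omega)}^{2a}\|z\|_{L^{2/3}(\Omega)}^{2(1-a)}+c_1\|z\|_{L^{2/3}(\Omega)}^2 ,\qquad a=\tfrac67 .
\end{equation*}
Young's inequality with parameter $\eta'$ then yields
\begin{equation*}
\int_\Omega\varphi^{p+1}\psi\le\eta'\|\nabla z\|_{L^2(\Omega)}^2+c_2\eta'^{-\gamma}\|z\|_{L^{2/3}(\Omega)}^2,\qquad \gamma=\tfrac{a}{1-a}.
\end{equation*}
Expanding the gradient as in \eqref{3.2e},
\begin{equation*}
\|\nabla z\|_{L^2(\Omega)}^2\le\tfrac{(p+1)^2}{2}\int_\Omega\varphi^{p-1}\psi|\nabla\varphi|^2+\tfrac12\int_\Omega\varphi^{p+1}\frac{|\nabla\psi|^2}{\psi}.
\end{equation*}
Choosing $\eta'\simeq\eta/p^2$ makes the first contribution at most $\eta\int_\Omega\varphi^{p-1}\psi|\nabla\varphi|^2$. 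The price is the factor $\eta^{-\gamma}p^{2\gamma}$ in front of $\{\int_\Omega\varphi\psi\}\{\int_\Omega\varphi^{p/2}\}^2$, which explains the form of the constant. The constants $c_1,c_2,\gamma$ depend only on $\Omega$, since the Gagliardo--Nirenberg step never sees $p$.

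The remaining task, and the one I expect to be the main obstacle, is to absorb the mixed term $\frac{\eta}{p^2}\int_\Omega\varphi^{p+1}\psi^{-1}|\nabla\psi|^2$. It must be split into a small multiple of $\int_\Omega\varphi^{p+1}\psi$, absorbed on the left, plus $\eta\{\int_\Omega|\nabla\psi|^8\psi^{-7}\}\{\int_\Omega\varphi^{p/2}\}^{2(p+1)/p}$. A plain three-factor H\"older splitting fails here. Writing the integrand as $(\varphi^{p+1}\psi)^{\theta}\cdot|\nabla\psi|^2\psi^{-1-\theta}\cdot\varphi^{(p+1)(1-\theta)}$, matching $\psi^{-7}$ in the eighth power forces $\theta=\frac34$, and this leaves no H\"older room for the $\varphi^{p/2}$ factor.

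What I would try first is to insert a second interpolation. Apply H\"older with $|\nabla\psi|^2\psi^{-7/4}$ in $L^4$ against $\varphi^{p+1}\psi^{3/4}$ in $L^{4/3}$, giving
\begin{equation*}
\int_\Omega\varphi^{p+1}\frac{|\nabla\psi|^2}{\psi}\le\Big\{\int_\Omega\frac{|\nabla\psi|^8}{\psi^7}\Big\}^{\frac14}\Big\{\int_\Omega\varphi^{\frac{4(p+1)}{3}}\psi\Big\}^{\frac34}.
\end{equation*}
The factor $\int_\Omega\varphi^{\frac{4(p+1)}3}\psi=\|z\|_{L^{8/3}(\Omega)}^{8/3}$ is then controlled again by Gagliardo--Nirenberg, now between $\|\nabla z\|_{L^2(\Omega)}$ and $\|z\|_{L^{2/3}(\Omega)}$, or between $\|z\|_{L^2(\Omega)}$ and a norm of $\varphi$ expressed through $\int_\Omega\varphi^{p/2}$. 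Young's inequality should then close the estimate: part of it is absorbed into $\|\nabla z\|_{L^2(\Omega)}^2$ and $\int_\Omega\varphi^{p+1}\psi$, and the rest produces the stated product with the gradient functional to the first power. The hypothesis $p>4$ should be what makes the homogeneity exponents in this last Young step admissible. Fixing these exponents explicitly, so that the $\eta$ and $p$ dependence comes out exactly as claimed, is where I expect the real work.
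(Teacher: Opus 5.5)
\textbf{There is a genuine gap.} It sits in your very first step, not only in the final Young inequality. Once you run Gagliardo--Nirenberg with $\|\nabla z\|_{L^2(\Omega)}$, you must dominate $\eta'\|\nabla z\|_{L^2(\Omega)}^2$ by the right-hand side of the lemma plus $\frac12\int_\Omega\varphi^{p+1}\psi$, and no such bound exists. To see this, take $\varphi=M\Phi(\cdot/r)$ and $\psi=N\Psi(\cdot/r)$ concentrated in a ball of radius $r\to0$, with negligible positive background values. Then:
\begin{itemize}
\item $\|\nabla z\|_{L^2(\Omega)}^2$ and $\int_\Omega\varphi^{p-1}\psi|\nabla\varphi|^2$ both scale like $M^{p+1}Nr$;
\item $\int_\Omega\varphi^{p+1}\psi\sim M^{p+1}Nr^3$;
\item $\{\int_\Omega|\nabla\psi|^8\psi^{-7}\}\{\int_\Omega\varphi^{p/2}\}^{2(p+1)/p}\sim M^{p+1}Nr^{1+6/p}$;
\item $\{\int_\Omega\varphi\psi\}\{\int_\Omega\varphi^{p/2}\}^2\sim M^{p+1}Nr^9$.
\end{itemize}
Dividing by $M^{p+1}Nr$ and letting $r\to0$, you would need $\eta'\int|\nabla(\Phi^{\frac{p+1}{2}}\Psi^{\frac12})|^2\le\eta\int\Phi^{p-1}\Psi|\nabla\Phi|^2$ for all profiles. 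This fails when $\Phi\equiv1$ on the set where $\Psi$ varies and $\Psi$ is exponentially small where $\nabla\Phi\neq0$. So no second interpolation can rescue this route.

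Your rescue also has a concrete error. $\int_\Omega\varphi^{\frac{4(p+1)}{3}}\psi$ is not $\|z\|_{L^{8/3}(\Omega)}^{8/3}$, which equals $\int_\Omega\varphi^{\frac{4(p+1)}{3}}\psi^{\frac43}$. Written correctly as $\int_\Omega z^2\varphi^{\frac{p+1}{3}}$, Hölder against $\int_\Omega\varphi^{p/2}$ would need $z\in L^{2s}$ with $s>3$, which is beyond $W^{1,2}\subset L^6$. Separately, the Gagliardo--Nirenberg exponent for $L^2$ between $\|\nabla z\|_{L^2}$ and $L^{2/3}$ is $a=\frac34$, not $\frac67$.

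\textbf{The missing idea.} The paper puts the gradient in $L^q$ with $q\in(\frac65,2)$ close to $\frac65$:
\begin{equation*}
\|z\|_{L^2}^2\le c_1\|\nabla z\|_{L^q}^{\frac{12q}{11q-6}}\|z\|_{L^{2/3}}^{\frac{10q-12}{11q-6}}+c_1\|z\|_{L^{2/3}}^2 .
\end{equation*}
Since $\frac{12q}{11q-6}<2$, Young gives $\delta\|\nabla z\|_{L^q}^2+c\,\delta^{-\frac{6q}{5q-6}}\|z\|_{L^{2/3}}^2$.
\begin{itemize}
\item The $\nabla\varphi$-part of $\|\nabla z\|_{L^q}^2$ is pushed to $L^2$ by Hölder on the bounded domain.
\item The $\nabla\psi$-part is written as $(|\nabla\psi|^8\psi^{-7})^{q/8}\varphi^{\frac{(p+1)q}{2}}\psi^{\frac{3q}{8}}$. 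Applying Hölder twice then gives
\begin{equation*}
\Big\{\int_\Omega\frac{|\nabla\psi|^8}{\psi^7}\Big\}^{\frac14}\Big\{\int_\Omega\varphi^{p+1}\psi\Big\}^{\frac34}\Big\{\int_\Omega\varphi^{\frac{(p+1)q}{8-4q}}\Big\}^{\frac{2-q}{q}} .
\end{equation*}
\end{itemize}
The positive exponent $\frac{2-q}{q}$ is exactly the Hölder room you found missing at $q=2$.

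This is where $p>4$ enters. We have $\frac{(p+1)q}{8-4q}\le\frac p2$ if and only if $q\le\frac{4p}{3p+1}$, so for $p>4$ this holds for any fixed $q\in(\frac65,\frac{16}{13}]$. The paper's stated range $q<\frac43$ is slightly too generous at this point. The last factor is then $\lesssim\{\int_\Omega\varphi^{p/2}\}^{\frac{p+1}{2p}}$. Young with exponents $\frac43$ and $4$ absorbs $\frac12\int_\Omega\varphi^{p+1}\psi$ and produces the term $\eta\{\int_\Omega|\nabla\psi|^8\psi^{-7}\}\{\int_\Omega\varphi^{p/2}\}^{\frac{2(p+1)}{p}}$ once $\delta\lesssim\eta^{1/4}$. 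Your treatment of the $L^{2/3}$ term and of the $\eta^{-\gamma}p^{2\gamma}$ bookkeeping then goes through unchanged, with $\gamma=\frac{6q}{5q-6}$.
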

\begin{proof}
For fixed $q\in(\frac6 5, \frac4 3)$, the three-dimensional Gagliardo-Nirenberg inequality provides $c_1(q)>0$ such that
    \begin{equation*}
        \|\rho\|_{L^2(\Omega)}^2\leq c_1(q)\|\nabla\rho\|_{L^q(\Omega)}^{\frac{12q}{11q-6}}\|\rho\|_{L^{\frac2 3}(\Omega)}^{\frac{10q-12}{11q-6}}+c_1(q)\|\rho\|_{L^{\frac2 3}(\Omega)}^{2}~~\mbox{for all}~~\rho\in C^1(\overline{\Omega}).
    \end{equation*}
    Apply this inequality to $\rho=\varphi^{\frac{p+1}{2}}\psi^{\frac12}$, we obtain
    \begin{equation}\label{3.28}
    \begin{split}
        \|\varphi^{p+1}\psi\|_{L^1(\Omega)}&=\|\varphi^{\frac{p+1}{2}}\psi^{\frac1 2}\|_{L^2(\Omega)}^2\\
        &\leq c_1\|\nabla(\varphi^{\frac{p+1}{2}}\psi^{\frac1 2})\|_{L^q(\Omega)}^{\frac{12q}{11q-6}}\|\varphi^{\frac{p+1}{2}}\psi^{\frac1 2}\|_{L^{\frac2 3}(\Omega)}^{\frac{10q-12}{11q-6}}+c_1\|\varphi^{\frac{p+1}{2}}\psi^{\frac1 2}\|_{L^{\frac2 3}(\Omega)}^{2}
        \end{split}
    \end{equation}
   with $\varphi\in C^1(\overline{\Omega})$ and $\psi\in C^1(\overline{\Omega})$. By  Young's inequality, we can see that
    \begin{equation}\label{3.29}
    \begin{split}
       &c_1\|\nabla(\varphi^{\frac{p+1}{2}}\psi^{\frac1 2})\|_{L^q(\Omega)}^{\frac{12q}{11q-6}}\|\varphi^{\frac{p+1}{2}}\psi^{\frac1 2}\|_{L^{\frac2 3}(\Omega)}^{\frac{10q-12}{11q-6}}\\
       &=\bigg\{\delta\|\nabla(\varphi^{\frac{p+1}{2}}\psi^{\frac1 2})\|_{L^q(\Omega)}^{2}\bigg\}^{\frac{6q}{11q-6}}\cdot c_1\delta^{-\frac{6q}{11q-6}}\|\varphi^{\frac{p+1}{2}}\psi^{\frac1 2}\|_{L^{\frac2 3}(\Omega)}^{\frac{10q-12}{11q-6}}\\
       &\leq\delta\|\nabla(\varphi^{\frac{p+1}{2}}\psi^{\frac1 2})\|_{L^q(\Omega)}^{2}+c_1^{\frac{11q-6}{5q-6}}\delta^{-\frac{6q}{5q-6}}\|\varphi^{\frac{p+1}{2}}\psi^{\frac1 2}\|_{L^{\frac2 3}(\Omega)}^{2}\\
       &\leq\delta\bigg\{\frac{p+1}{2}\|\varphi^{\frac{p-1}{2}}\psi^{\frac1 2}\nabla\varphi\|_{L^q(\Omega)}+\frac{1}{2}\|\varphi^{\frac{p+1}{2}}\psi^{-\frac1 2}\nabla\psi\|_{L^q(\Omega)}\bigg\}^2
       +c_1^{\frac{11q-6}{5q-6}}\delta^{-\frac{6q}{5q-6}}\|\varphi^{\frac{p+1}{2}}\psi^{\frac1 2}\|_{L^{\frac2 3}(\Omega)}^{2}\\
       &\leq\frac{(p+1)^2 \delta}{2}\|\varphi^{\frac{p-1}{2}}\psi^{\frac1 2}\nabla\varphi\|_{L^q(\Omega)}^2+\frac{\delta}{2}\|\varphi^{\frac{p+1}{2}}\psi^{-\frac1 2}\nabla\psi\|_{L^q(\Omega)}^2
       +c_1^{\frac{11q-6}{5q-6}}\delta^{-\frac{6q}{5q-6}}\|\varphi^{\frac{p+1}{2}}\psi^{\frac1 2}\|_{L^{\frac2 3}(\Omega)}^{2}.\\
        \end{split}
    \end{equation}
   Moreover, due to $q<2$,  the application of the H\"{o}lder inequality yields
    \begin{equation}
        \begin{split}
            \frac{(p+1)^2 \delta}{2}\|\varphi^{\frac{p-1}{2}}\psi^{\frac1 2}\nabla\varphi\|_{L^q(\Omega)}^2&\leq\frac{(p+1)^2 \delta}{2}\cdot|\Omega|^{\frac{2-q}{q}}\|\varphi^{\frac{p-1}{2}}\psi^{\frac1 2}\nabla\varphi\|_{L^2(\Omega)}^2.\\
        \end{split}
    \end{equation}
    On the other hand, using the H\"{o}lder inequality and the Young inequality once more, one can find $c_2>0$ such that
    \begin{align}
            \frac{\delta}{2}\|\varphi^{\frac{p+1}{2}}\psi^{-\frac1 2}\nabla\psi\|_{L^q(\Omega)}^2&=\frac{\delta}{2}\cdot\bigg\{\int_\Omega\varphi^{\frac{(p+1)q}{2}}\psi^{-\frac q 2}|\nabla\psi|^q\bigg\}^{\frac2 q}\nonumber\\
            &=\frac{\delta}{2}\cdot\bigg\{\int_\Omega\bigg(\frac{|\nabla\psi|^8}{\psi^7}\bigg)^{\frac{q}{8}}\varphi^{\frac{(p+1)q}{2}}\psi^{\frac {3q} 8}\bigg\}^{\frac2 q}\nonumber\\
            &\leq\frac{\delta}{2}\cdot\bigg\{\int_\Omega\frac{|\nabla\psi|^8}{\psi^7}\bigg\}^{\frac{1}{4}}\bigg\{\int_\Omega\varphi^{\frac{4(p+1)q}{8-q}}\psi^{\frac {3q} {8-q}}\bigg\}^{\frac{8-q} {4q}}\nonumber\\
            &=\frac{\delta}{2}\cdot\bigg\{\int_\Omega\frac{|\nabla\psi|^8}{\psi^7}\bigg\}^{\frac{1}{4}}\bigg\{\int_\Omega\big(\varphi^{p+1}\psi\big)^{\frac {3q} {8-q}}\varphi^{\frac{(p+1)q}{8-q}}\bigg\}^{\frac{8-q} {4q}}\\
            &\leq\frac{\delta}{2}\cdot
            \bigg\{\int_\Omega\frac{|\nabla\psi|^8}{\psi^7}\bigg\}^{\frac{1}{4}}
            \bigg\{\int_\Omega\varphi^{p+1}\psi\bigg\}^{\frac{3} {4}}\bigg\{\int_\Omega\varphi^{\frac{(p+1)q}{8-4q}}\bigg\}^{\frac{2-q} {q}}\nonumber\\
             &\leq\frac{c_2\delta}{2}\cdot
             \bigg\{\int_\Omega\frac{|\nabla\psi|^8}{\psi^7}\bigg\}^{\frac{1}{4}}\bigg\{\int_\Omega\varphi^{p+1}\psi\bigg\}^{\frac{3} {4}}\bigg\{\int_\Omega\varphi^{\frac{p}{2}}\bigg\}^{\frac{p+1} {2p}}\nonumber\\
             &\leq\frac{1}{2}\int_\Omega\varphi^{p+1}\psi+\frac{c_2^4\delta^4}{2}\bigg\{\int_\Omega\frac{|\nabla\psi|^8}{\psi^7}\bigg\}\cdot\bigg\{\int_\Omega\varphi^{\frac{p}{2}}\bigg\}^{\frac{2(p+1)} {p}},\nonumber
        \end{align}
where the fact that $p>4$ and $\frac6 5 <q<\frac4 3$ implies $\frac{(p+1)q}{8-4q}\leq\frac{p}{2}$ and thus allows us to apply the H\"{o}lder inequality to get
\begin{align*}
\bigg\{\int_\Omega\varphi^{\frac{(p+1)q}{8-4q}}\bigg\}^{\frac{2-q} {q}}
&\leq|\Omega|^{\frac{4p-3pq-q}{2pq}}
\bigg\{\int_\Omega\varphi^{\frac{p}{2}}\bigg\}^{\frac{p+1} {2p}}\\
& \leq c_2\bigg\{\int_\Omega\varphi^{\frac{p}{2}}\bigg\}^{\frac{p+1} {2p}}
\end{align*}
with $c_2:=\max\{1,|\Omega|^{\frac{2}{q}}\}$ due to $0\leq\frac{4p-3pq-q}{2pq}\leq\frac{2}{q}$.
    In addition, we have
    \begin{equation}\label{3.32}
        \|\varphi^{\frac{p+1}{2}}\psi^{\frac{1}{2}}\|_{L^{\frac{2}{3}}(\Omega)}^2=\bigg\{\int_\Omega\varphi^{\frac{p+1}{3}}\psi^{\frac{1}{3}}\bigg\}^{3}=\bigg\{\int_\Omega\big(\varphi\psi\big)^{\frac{1}{3}}\varphi^{\frac{p}{3}}\bigg\}^{3}\leq\bigg\{\int_\Omega\varphi\psi\bigg\}\cdot\bigg\{\int_\Omega\varphi^{\frac{p}{2}}\bigg\}^{2}.
    \end{equation}
    Therefore  for given $\eta\in(0,1]$ and $\delta:=\min\big\{\frac{\eta}{(p+1)^2|\Omega|^{\frac{2-q}{q}}},\frac{\eta^{\frac{1}{4}}}{c_2}\big\}$, we combine \eqref{3.28}--\eqref{3.32} to arrive at
    \begin{align}\label{3.33}
            &\int_\Omega\varphi^{p+1}\psi\nonumber\\
            ~~&\leq(p+1)^2 \delta|\Omega|^{\frac{2-q}{q}}\|\varphi^{\frac{p-1}{2}}\psi^{\frac1 2}\nabla\varphi\|_{L^2(\Omega)}^2+c_2^4\delta^4\bigg\{\int_\Omega\frac{|\nabla\psi|^8}{\psi^7}\bigg\}\cdot\bigg\{\int_\Omega\varphi^{\frac{p}{2}}\bigg\}^{\frac{2(p+1)} {p}}\nonumber\\
            &~~+c_3
            \bigg\{\int_\Omega\varphi\psi\bigg\}
            \cdot\bigg\{\int_\Omega\varphi^{\frac{p}{2}}\bigg\}^{2}\\
            &\leq\eta\int_\Omega\varphi^{p-1}\psi|\nabla\varphi|^2
            +\eta\bigg\{\int_\Omega\frac{|\nabla\psi|^8}{\psi^7}\bigg\}
            \cdot\bigg\{\int_\Omega\varphi^{\frac{p}{2}}\bigg\}^{\frac{2(p+1)} {p}}
            +c_3
            \bigg\{\int_\Omega\varphi\psi\bigg\}
            \cdot\bigg\{\int_\Omega\varphi^{\frac{p}{2}}\bigg\}^{2}\nonumber\\
            &\leq\eta\int_\Omega\varphi^{p-1}\psi|\nabla\varphi|^2+\eta\bigg\{\int_\Omega\frac{|\nabla\psi|^8}{\psi^7}\bigg\}\cdot\bigg\{\int_\Omega\varphi^{\frac{p}{2}}\bigg\}^{\frac{2(p+1)} {p}}
            +c_4\eta^{-\gamma}p^{2\gamma}\bigg\{\int_\Omega\varphi\psi\bigg\}\cdot\bigg\{\int_\Omega\varphi^{\frac{p}{2}}\bigg\}^{2}\nonumber
    \end{align}
    where the inequalities $p>1$ and $\eta\leq1$ warrant that
    \begin{align}\label{3.34}
            c_3&:=2c_1^{\frac{11q-6}{5q-6}}\delta^{-\frac{6q}{5q-6}}+2c_1\nonumber\\
            &=2c_1^{\frac{11q-6}{5q-6}}\cdot
            \max\bigg\{\bigg(\frac{(p+1)^2|\Omega|^{\frac{2-q}{q}}}{\eta}\bigg)^{\frac{6q}{5q-6}},
            \bigg(\frac{c_2}{\eta^{\frac{1}{4}}}\bigg)^{\frac{6q}{5q-6}}\bigg\}+2c_1\\
            &\leq 2c_1^{\frac{11q-6}{5q-6}}\cdot\max\bigg\{(4|\Omega|^{\frac{2-q}{q}})^{\frac{6q}{5q-6}},
            c_2^{\frac{6q}{5q-6}}\bigg\}
            \eta^{-\frac{6q}{5q-6}}p^{\frac{12q}{5q-6}}
            +2c_1\eta^{-\frac{6q}{5q-6}}p^{\frac{12q}{5q-6}}\nonumber\\
            & \leq c_4\eta^{-\gamma}p^{2\gamma}\nonumber
        \end{align}
    with $c_4:=2c_1^{\frac{11q-6}{5q-6}}\cdot\max\{(4|\Omega|^{\frac{2-q}{q}})^{\frac{6q}{5q-6}},c_2^{\frac{6q}{5q-6}}\}+2c_1$ and $\gamma=\frac{6q}{5q-6}$. The claim hence results from \eqref{3.33} and \eqref{3.34}.
\end{proof}
\begin{lemma}\label{lemma3.9}(\cite{WinklerEven})
    Let $a\geq1,b\geq1,q\geq0$ and $\{N_k\}_{k\in\{1,2,3,...\}}\subset[1,\infty)$ be such that
    \begin{equation*}
        N_{k}\leq a^k N_{k-1}^{2+q\cdot 2^{-k}}+b^{2^k}~~\mbox{for all}~~k\geq1.
    \end{equation*}
    Then
    \begin{equation}
        \liminf\limits_{k\to\infty}N_k^{\frac{1}{2^k}}\leq(2\sqrt{2}a^3b^{1+\frac{q}{2}}N_0)^{e^{\frac{q}{2}}}.
    \end{equation}
\end{lemma}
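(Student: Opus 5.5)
The plan is to take logarithms and turn the multiplicative recursion into a linear one. Set $L_k:=2^{-k}\ln N_k\geq0$, which is allowed because $N_k\geq1$.

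\textbf{Step 1: absorb the additive term.} Since $N_{k-1}\geq1$, $a\geq1$ and $b\geq1$, we have
\[
a^kN_{k-1}^{2+q2^{-k}}+b^{2^k}\leq 2\max\{a^kN_{k-1}^{2+q2^{-k}},\,b^{2^k}\}\leq 2\,a^k b^{2^k}N_{k-1}^{2+q2^{-k}}.
\]
Taking logarithms and dividing by $2^k$ gives
\[
L_k\leq \frac{\ln 2+k\ln a}{2^k}+\ln b+\Big(1+\frac{q}{2}2^{-k}\Big)L_{k-1}.
\]
This already loses more than the claim allows. The constant $\ln b$ appears at every step and would accumulate linearly in $k$.

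The fix is to normalize first. Define $M_k:=N_k/b^{2^k}\cdot b$, or more simply compare $N_k$ with $b^{2^k}$ directly. Put $\widetilde N_k:=\max\{N_k,b^{2^k}\}$. Then $N_{k-1}\leq\widetilde N_{k-1}$ and $b^{2^k}=(b^{2^{k-1}})^2\leq\widetilde N_{k-1}^{2}$. Hence
\[
N_k\leq 2a^k\widetilde N_{k-1}^{2+q2^{-k}},
\]
and the same bound holds trivially for $b^{2^k}$. Therefore
\[
\widetilde N_k\leq 2a^k\widetilde N_{k-1}^{2+q2^{-k}}.
\]

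\textbf{Step 2: solve the linear recursion.} With $\widetilde L_k:=2^{-k}\ln\widetilde N_k$ we get
\[
\widetilde L_k\leq \Big(1+\frac q2 2^{-k}\Big)\widetilde L_{k-1}+\frac{\ln2+k\ln a}{2^k}.
\]
Iterate from $1$ to $k$ and bound $\prod_j(1+\tfrac q2 2^{-j})\leq \exp(\tfrac q2\sum_{j\geq1}2^{-j})=e^{q/2}$. This yields
\[
\widetilde L_k\leq e^{q/2}\Big(\widetilde L_0+\sum_{j\geq1}\frac{\ln 2+j\ln a}{2^j}\Big)=e^{q/2}\big(\widetilde L_0+\ln 2+2\ln a\big).
\]
Here $\widetilde L_0=\ln\max\{N_0,b\}\leq\ln(bN_0)$.

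\textbf{Step 3: conclude.} From Step 2,
\[
N_k^{2^{-k}}\leq\widetilde N_k^{2^{-k}}\leq (2a^2bN_0)^{e^{q/2}}\leq\big(2\sqrt2\,a^3b^{1+q/2}N_0\big)^{e^{q/2}},
\]
and the last inequality uses $a,b\geq1$. This holds for every $k$, so it holds for the $\liminf$.

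The only delicate point is Step 1, namely preventing the $b^{2^k}$ term from accumulating. Working with $\max\{N_k,b^{2^k}\}$ handles this. The constants in the statement are more generous than the ones obtained here, so no sharper bookkeeping is needed.
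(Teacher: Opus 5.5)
Your argument is correct and complete. The paper itself gives no proof of this lemma; it only imports it from \cite{WinklerEven}, so there is no in-paper route to compare against, and your proof serves as a self-contained justification.

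The decisive device is passing to $\widetilde N_k=\max\{N_k,b^{2^k}\}$. Since $\widetilde N_{k-1}\ge b^{2^{k-1}}\ge1$, you have
$b^{2^k}=(b^{2^{k-1}})^2\le\widetilde N_{k-1}^2\le\widetilde N_{k-1}^{2+q2^{-k}}$.
So the inhomogeneous term is absorbed into the homogeneous one, and you obtain the purely multiplicative recursion $\widetilde N_k\le 2a^k\widetilde N_{k-1}^{2+q2^{-k}}$. This avoids the linear-in-$k$ accumulation of $\ln b$ that you correctly flagged in the naive attempt. The logarithmic linearization is then routine: it uses $\prod_{j\ge1}(1+\frac q2 2^{-j})\le e^{q/2}$, $\sum_{j\ge1}j2^{-j}=2$, and the nonnegativity of $\widetilde L_0$ and of the forcing terms.

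What this buys is more than the statement asks for. You get $N_k^{2^{-k}}\le(2a^2\max\{N_0,b\})^{e^{q/2}}$ for \emph{every} $k$, which bounds $\sup_k$ rather than just $\liminf_k$, with a smaller constant than $2\sqrt2\,a^3b^{1+q/2}N_0$.

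Two small remarks on the write-up.

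\textbf{The hypothesis $N_0\ge1$.} The only place you use $N_0\ge1$ is the final comparison $\max\{N_0,b\}\le bN_0$, and $\widetilde L_0\ge0$ holds regardless because $\widetilde N_0\ge b\ge1$. State this hypothesis explicitly, because the lemma's index set $\{1,2,3,\dots\}$ does not literally include $N_0$. The hypothesis is genuinely needed for the stated conclusion. Take $a=b=1$, $q=0$, $N_0<1/(2\sqrt2)$ and $N_k=1$ for $k\ge1$: the recursion holds, but the claimed bound is $<1$. So the index set should be read as $\{0,1,2,\dots\}$. In the paper's application one has $N_0=1+\sup_{\varepsilon}\sup_{t}\int_\Omega u_\varepsilon^{3}\ge1$, so nothing is lost.

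\textbf{The false start.} The abandoned normalization $M_k$ and the first version of Step 1 can simply be deleted from a final write-up.
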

\begin{proof}[Proof of Theorem \ref{theorem 1.1}] The global existence has fully been covered by Lemmas \ref{lemma2.11}--\ref{lemma2.15}. According to Lemma \ref{lemma3.7}, there exists $c_1(p)>0$ such that $\|u(\cdot,t)\|_{L^p(\Omega)}\leq c_1(p)$ for all $p\geq1$. Subsequently, by applying smoothing estimates for the Neumann heat semigroup, we obtain $\|w(\cdot,t)\|_{L^{\infty}(\Omega)}\leq c_2$ and $\|\nabla v(\cdot,t)\|_{L^\infty(\Omega)}\leq c_3$ for all $t>0$. For integers $k\geq1$, we set $p_k:=3\cdot2^k$, and let
\begin{equation}\label{3.36}
    N_k=1+\sup\limits_{\varepsilon\in(0,1)}\sup\limits_{t>0}\int_\Omega u_\varepsilon^{p_k}(\cdot,t).
\end{equation}
By the first equation of system \eqref{2.5}  and Young's inequality, one can see that
\begin{align*}
        \frac{d}{dt}\int_\Omega u_\varepsilon^{p_k}&=-p_k(p_k-1)\int_\Omega u_\varepsilon^{p_k-1}v_\varepsilon|\nabla u_\varepsilon|^2+p_k(p_k-1)\int_\Omega u_\varepsilon^{p_k}v_\varepsilon\nabla u_\varepsilon\cdot\nabla v_\varepsilon+p_k\int_\Omega v_\varepsilon w_\varepsilon u_\varepsilon^{p_k-1}\\
        &\leq-\frac{p_k(p_k-1)}{2}\int_\Omega u_\varepsilon^{p_k-1}v_\varepsilon|\nabla u_\varepsilon|^2+\frac{p_k(p_k-1)}{2}\int_\Omega u_\varepsilon^{p_k+1}v_\varepsilon|\nabla v_\varepsilon|^2\\
        &~~+\frac{c_2p_k(p_k-1)}{p_k+1}\int_\Omega v_\varepsilon u_\varepsilon^{p_k+1}+\frac{2c_2 p_k}{p_k+1}\int_\Omega v_\varepsilon\\
        &\leq-\frac{p_k(p_k-1)}{2}\int_\Omega u_\varepsilon^{p_k-1}v_\varepsilon|\nabla u_\varepsilon|^2+(c_3^2+c_2)p_k(p_k-1)\int_\Omega u_\varepsilon^{p_k+1}v_\varepsilon
        +2c_2p_k\int_\Omega v_\varepsilon.
\end{align*}
Since $p_k(p_k-1)\leq p_k^2,p_k\leq p_k^2$ and $\frac{p_k(p_k-1)}{2}\geq\frac{p_k}{4}$, we infer that
\begin{equation}\label{3.37}
    \begin{split}
        \frac{d}{dt}\int_\Omega u_\varepsilon^{p_k}+\frac{p_k^2}{4}\int_\Omega u_\varepsilon^{p_k-1}v_\varepsilon|\nabla u_\varepsilon|^2\leq c_4p_k^2\int_\Omega u_\varepsilon^{p_k+1}v_\varepsilon+c_4p_k^2\int_\Omega v_\varepsilon
    \end{split}
\end{equation}
with $c_4=\max\{c_3^2+c_2,2c_2\}$. Applying Lemma \ref{lemma3.8} to $p:=6$, we can conclude that
\begin{equation*}
    \begin{split}
        \int_\Omega u_\varepsilon^{p_k+1}v_\varepsilon&\leq\frac{1}{4c_4}\int_\Omega u_\varepsilon^{p_k-1}v_\varepsilon|\nabla u_\varepsilon|^2+\frac{1}{4c_4}\bigg\{\int_\Omega\frac{|\nabla v_\varepsilon|^8}{v_\varepsilon^7}\bigg\}\cdot\bigg\{\int_\Omega u_\varepsilon^{\frac{p_k}{2}}\bigg\}^{\frac{2(p_k+1)} {p_k}}\\
        &~~+c_5(4c_4)^{\gamma}p_k^{2\gamma}\bigg\{\int_\Omega u_\varepsilon v_\varepsilon\bigg\}\cdot\bigg\{\int_\Omega u_\varepsilon^{\frac{p_k}{2}}\bigg\}^{2}.
    \end{split}
\end{equation*}
According to the definition of $N_k$ in \eqref{3.36}, we  have
\begin{equation*}
    \begin{split}
        \int_\Omega u_\varepsilon^{\frac{p_k}{2}}=\int_\Omega u_\varepsilon^{p_{k-1}}\leq N_{k-1}.
    \end{split}
\end{equation*}
Hence we obtain that
\begin{equation*}
    \begin{split}
        c_4p_k^2\int_\Omega u_\varepsilon^{p_k+1}v_\varepsilon&\leq\frac{p_k^2}{4}\int_\Omega u_\varepsilon^{p_k-1}v_\varepsilon|\nabla u_\varepsilon|^2+\frac{p_k^2}{4}N_{k-1}^{\frac{2(p_k+1)} {p_k}}\int_\Omega\frac{|\nabla v_\varepsilon|^8}{v_\varepsilon^7}\\
        &~~+c_5\cdot4^\gamma c_4^{\gamma+1}p_k^{2\gamma+2} N_{k-1}^{2}\int_\Omega u_\varepsilon v_\varepsilon.
    \end{split}
\end{equation*}
Since $p_k^2\leq p_k^{2\gamma+2}$ and $1\leq N_{k-1}^2\leq N_{k-1}^{\frac{2(p_k+1)}{p_k}}$ for all $k\geq1$, it follows from \eqref{3.37} that
\begin{equation}\label{3.38}
\begin{split}
     \frac{d}{dt}\int_\Omega u_\varepsilon^{p_k}&\leq \frac{p_k^2}{4}N_{k-1}^{\frac{2(p_k+1)} {p_k}}\int_\Omega\frac{|\nabla v_\varepsilon|^8}{v_\varepsilon^7}+c_5\cdot4^\gamma c_4^{\gamma+1}p_k^{2\gamma+2} N_{k-1}^{2}\int_\Omega u_\varepsilon v_\varepsilon+c_4p_k^2\int_\Omega v_\varepsilon\\
     &\leq c_6 p_k^{2\gamma+2} N_{k-1}^{\frac{2(p_k+1)}{p_k}}\bigg\{\int_\Omega\frac{|\nabla v_\varepsilon|^8}{v_\varepsilon^7}+\int_\Omega u_\varepsilon v_\varepsilon+\int_\Omega v_\varepsilon\bigg\}
     \end{split}
\end{equation}
with $c_6:=\max\{\frac{1}{4},c_5\cdot4^{\gamma}c_4^{\gamma+1},c_4\}$.
According to \eqref{2.15}, \eqref{2.8} and \eqref{2.12}, we find $c_7>0$ such that
\begin{equation}\label{3.39}
    \int_0^{\infty}\int_\Omega\bigg\{\frac{|\nabla v_\varepsilon|^8}{v_\varepsilon^7}+\int_\Omega u_\varepsilon v_\varepsilon+\int_\Omega v_\varepsilon\bigg\}\leq c_7.
\end{equation}
According to \eqref{3.36},  \eqref{3.38} and \eqref{3.39}, we arrive at
\begin{equation*}
    \begin{split}
        N_k&=1+\sup\limits_{\varepsilon\in(0,1)}\sup\limits_{t>0}\int_\Omega u_\varepsilon^{p_k}\\
    &\leq1+\int_\Omega(u_0+\varepsilon)^{p_k}+c_6c_7p_k^{2\gamma+2} N_{k-1}^{\frac{2(p_k+1)}{p_k}}\\
    &\leq1+|\Omega|\cdot\|u_0+1\|_{L^\infty(\Omega)}^{p_k}+c_6c_7p_k^{2\gamma+2} N_{k-1}^{\frac{2(p_k+1)}{p_k}}\\
    &\leq b^{2^k}+a^k N_{k-1}^{2+\frac2 3\cdot2^{-k}}~~\mbox{for all}~~k\geq1
    \end{split}
\end{equation*}
with $b:=1+|\Omega|\cdot\|u_0+1\|_{L^\infty(\Omega)}^3$ and $a:=\max\{6c_6c_7,1\}^{2\gamma+2}$. An application of Lemma \ref{lemma3.9} thus asserts that
\begin{equation*}
        \liminf\limits_{k\to\infty}N_k^{\frac{1}{2^k}}\leq(2\sqrt{2}a^3b^{\frac4 3}N_0)^{e^{\frac{1}{3}}},
    \end{equation*}
    which yields
    \begin{equation}\label{3.40}
    \|u_\varepsilon(\cdot,t)\|_{L^{\infty}(\Omega)}\leq C~~\mbox{for all}~~ t>0.
    \end{equation} The proof of Theorem \ref{theorem 1.1} is hence completed.
\end{proof}
\section{Large time behavior }
The aim of this section is to establish both the large-time convergence properties (Theorem \ref{theorem1.2}) and the occurrence of non-trivial pattern formation (Theorem \ref{Th1.3}). First, we establish the results on  large time decay of
 $u_{\varepsilon t}$ in generalized function spaces, which serves as a fundamental prerequisite for subsequent convergence analysis.
\begin{lemma}\label{lemma4.1}
Let $\Omega \subset \mathbb R^3$  be a smoothly bounded domain and $K>0$ with the property that \eqref{1.6} holds. Then there exist  $\sigma>0$ and $C=C(K)>0$ such that for all $\varepsilon\in(0,1)$ we have
\begin{align}\label{4.1}
   \int_0^{\infty}\|u_{\varepsilon t}(\cdot,t)\|_{(W^{1,\infty}(\Omega))^*}dt\leq C\cdot \left\{\int_\Omega v_{0}\right\}^{\sigma}.
\end{align}
\end{lemma}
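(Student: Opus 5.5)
Test the first equation of \eqref{2.5} against an arbitrary $\varphi\in W^{1,\infty}(\Omega)$ with $\|\varphi\|_{W^{1,\infty}(\Omega)}\le 1$. This gives, for all $t>0$,
\begin{equation*}
\Big|\int_\Omega u_{\varepsilon t}\varphi\Big|\le \int_\Omega u_\varepsilon v_\varepsilon|\nabla u_\varepsilon|+\int_\Omega u_\varepsilon^2 v_\varepsilon|\nabla v_\varepsilon|+\ell\int_\Omega v_\varepsilon w_\varepsilon .
\end{equation*}
It therefore suffices to bound the time integral of each term on the right by a power of $\int_\Omega v_0$. The uniform bounds from Section 3 are used throughout: $\|u_\varepsilon\|_{L^\infty}$, $\|w_\varepsilon\|_{L^\infty}$ and $\|\nabla v_\varepsilon\|_{L^\infty}$ are bounded by $C(K)$, as in the proof of Theorem \ref{theorem 1.1}, via \eqref{3.40} and Lemma \ref{lemma2.6}. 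The third term is immediate: by \eqref{2.10}, $\ell\int_0^\infty\int_\Omega v_\varepsilon w_\varepsilon\le \ell\int_\Omega v_0$.

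The key quantity is $\int_0^\infty\int_\Omega v_\varepsilon$. Since $w_\varepsilon\ge\kappa$ for $t>2t_0$ by Lemma \ref{lemma2.2}, we get $\kappa\int_{2t_0}^\infty\int_\Omega v_\varepsilon\le\int_0^\infty\int_\Omega v_\varepsilon w_\varepsilon\le\int_\Omega v_0$ from \eqref{2.10}. On $(0,2t_0)$, integrating the second equation gives $\int_\Omega v_\varepsilon(\cdot,t)\le\int_\Omega v_0$, so $\int_0^{2t_0}\int_\Omega v_\varepsilon\le 2\int_\Omega v_0$. Hence $\int_0^\infty\int_\Omega v_\varepsilon\le C\int_\Omega v_0$. (Here $t_0=1$ because $T_{max,\varepsilon}=\infty$.) Then
\begin{equation*}
\int_0^\infty\int_\Omega u_\varepsilon^2 v_\varepsilon|\nabla v_\varepsilon|\le C(K)\int_\Omega v_0 .
\end{equation*}

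The first term is the main obstacle, because the dissipation bound \eqref{3.20} is only bounded by $C(K)$ and carries no smallness. I would interpolate with Cauchy--Schwarz:
\begin{equation*}
\int_0^\infty\!\!\int_\Omega u_\varepsilon v_\varepsilon|\nabla u_\varepsilon|\le\Big(\int_0^\infty\!\!\int_\Omega u_\varepsilon^{p-1}v_\varepsilon|\nabla u_\varepsilon|^2\Big)^{\frac12}\Big(\int_0^\infty\!\!\int_\Omega u_\varepsilon^{3-p}v_\varepsilon\Big)^{\frac12}.
\end{equation*}
Choosing $p=2$ in Lemma \ref{lemma3.7} bounds the first factor by $C$. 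The second factor is at most $\|u_\varepsilon\|_{L^\infty}^{1/2}\big(\int_0^\infty\int_\Omega v_\varepsilon\big)^{1/2}\le C\{\int_\Omega v_0\}^{1/2}$.

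Summing the three contributions gives a bound $C(\{\int_\Omega v_0\}^{1/2}+\int_\Omega v_0)$. Since $\int_\Omega v_0\le|\Omega|K$, this is at most $C(K)\{\int_\Omega v_0\}^{1/2}$, so \eqref{4.1} holds with $\sigma=\frac12$. The point to check carefully is that the constant in \eqref{3.20} depends only on $K$ and is uniform in $\varepsilon$, which the lemma states. If that bound turned out not to hold for $p=2$, one could instead use the $u^{-3/4}$ dissipation of Lemma \ref{lemma3.3} together with Lemma \ref{lemma3.5}, which is also uniform; the Cauchy--Schwarz pairing then becomes $u^{11/4}v$, still bounded in $L^\infty$.
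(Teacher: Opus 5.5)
Your proof is correct and follows the paper's proof: the same duality bound splitting $\|u_{\varepsilon t}\|_{(W^{1,\infty}(\Omega))^*}$ into three terms, the dissipation bound \eqref{3.20} with $p=2$ for the first term, \eqref{2.10} for the third, and the smallness $\int_0^\infty\int_\Omega v_\varepsilon\le C\int_\Omega v_0$ obtained from the lower bound $w_\varepsilon\ge\kappa$. The only difference is that you use the uniform $L^\infty$ bounds on $u_\varepsilon$ and $\nabla v_\varepsilon$, whereas the paper uses three-factor H\"older estimates with the space--time bounds \eqref{3.21} and \eqref{4.7}; this gives you $\sigma=\frac12$ instead of the $\frac14$ the paper's estimates actually produce (its stated $\sigma=\max\{\frac14,1\}$ should be a $\min$).
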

\begin{proof}
    By the first equation in \eqref{2.5}, one can see that for all $t>0$ and any $\psi\in W^{1,\infty}(\Omega) $ such that $\|\psi\|_{W^{1,\infty}(\Omega)}\equiv \max\{\|\psi\|_{L^{\infty}(\Omega)},\|\nabla \psi\|_{L^{\infty}(\Omega)}\}\leq 1$,
    \begin{align*}
   \left |\int_{\Omega}u_{\varepsilon t}\psi\right|&=\left|-\int_{\Omega}u_{\varepsilon}v_{\varepsilon}\nabla u_{\varepsilon}\cdot\nabla\psi + \int_{\Omega}u_{\varepsilon}^{2}v_{\varepsilon}\nabla v_{\varepsilon}\cdot\nabla\psi+\ell\int_{\Omega}v_{\varepsilon}w_{\varepsilon}\psi\right|\\
    &\leq \int_{\Omega}u_{\varepsilon}v_{\varepsilon}|\nabla u_{\varepsilon}| + \int_{\Omega}u_{\varepsilon}^{2}v_{\varepsilon}|\nabla v_{\varepsilon}|+\ell\int_{\Omega}v_{\varepsilon}w_{\varepsilon},
    \end{align*}
 so that
 \begin{align}\label{4.2}
     \|u_{\varepsilon t}(\cdot,t)\|_{(W^{1,\infty}(\Omega))^*}\leq \int_{\Omega}u_{\varepsilon}v_{\varepsilon}|\nabla u_{\varepsilon}| +\int_{\Omega}u_{\varepsilon}^{2}v_{\varepsilon}|\nabla v_{\varepsilon}|+\ell\int_{\Omega}v_{\varepsilon}w_{\varepsilon}.
 \end{align}
 Thanks to the H\"{o}lder inequality and \eqref{2.10},  we infer that
  \begin{equation}\label{4.3}
     \int_0^{\infty}\int_{\Omega}u_{\varepsilon}v_{\varepsilon}|\nabla u_{\varepsilon}|
     \leq\left\{\int_0^{\infty}\int_{\Omega}u_{\varepsilon}v_{\varepsilon}|\nabla u_{\varepsilon}|^2\right\}^{\frac{1}{2}}\cdot \left\{\int_0^{\infty}\int_{\Omega}u_{\varepsilon}^{2}v_{\varepsilon} \right\}^{\frac{1}{4}}\cdot \left\{\int_0^{\infty}\int_{\Omega}v_{\varepsilon} \right\}^{\frac{1}{4}}\\
 \end{equation}
 and
  \begin{equation}\label{4.4}
     \int_0^{\infty}\int_{\Omega}u_{\varepsilon}^{2}v_{\varepsilon}|\nabla v_{\varepsilon}|
     \leq\left\{\int_0^{\infty}\int_{\Omega}v_{\varepsilon}|\nabla v_{\varepsilon}|^2\right\}^{\frac{1}{2}}\cdot \left\{\int_0^{\infty}\int_{\Omega}u_{\varepsilon}^{8}v_{\varepsilon} \right\}^{\frac{1}{4}}\cdot\left\{\int_0^{\infty}\int_{\Omega}v_{\varepsilon} \right\}^{\frac{1}{4}}\\
 \end{equation}
 as well as
 \begin{align}\label{4.5}
    \int_0^{\infty}\int_{\Omega}v_{\varepsilon}w_{\varepsilon}\leq\int_\Omega v_0.
 \end{align}
 Integrating the second equation in \eqref{2.5} and using Lemma \ref{lemma2.2}, we obtain
 \begin{equation*}
 \frac{d}{dt}\int_\Omega v_\varepsilon(\cdot,t)=-\int_{\Omega}v_\varepsilon w_\varepsilon\leq-\kappa\int_\Omega v_\varepsilon~~\mbox{for all}~~t>2,
 \end{equation*}
 which implies
 \begin{equation}\label{4.6*}
 \int_\Omega v_\varepsilon(\cdot,t)\leq e^{2\kappa}\|v_0\|_{L^1(\Omega)}e^{-\kappa t}~~\mbox{for all}~~t>0.
 \end{equation}
 Moreover, multiplying the second equation in \eqref{2.5} by $v_{\varepsilon}^2$, we have
    \begin{align}\label{4.6}
    \frac{d}{dt}\int_{\Omega}v_{\varepsilon}^3+6\int_{\Omega}v_{\varepsilon}|\nabla
    v_{\varepsilon}|^2=-3\int_{\Omega}v_{\varepsilon}^3w_{\varepsilon}\leq 0.
\end{align}
Integrating \eqref{4.6} and using \eqref{1.6}, we then obtain \begin{equation}\label{4.7}\int_0^{\infty}\int_\Omega v_\varepsilon|\nabla v_\varepsilon|^2\leq c.
\end{equation}
 It follows from \eqref{4.2}--\eqref{4.7} that there exists $C(K)>0$ such that
 \begin{align*}
      \int_0^{\infty}\|u_{\varepsilon t}(\cdot,t)\|_{(W^{1,\infty}(\Omega))^*}dt\leq C(K)\cdot\left\{\int_\Omega v_{0}\right\}^{\sigma}
 \end{align*}
 due to \eqref{1.6}, \eqref{3.20}, and \eqref{3.21}, and thereby we arrive at \eqref{4.1} with $\sigma:=\max\{\frac{1}{4},1\}$.
\end{proof}
For the subsequent reasoning, we formulate the results of  Lemma  \ref{lemma4.1}, which  exclusively involves the zero-order expression of $u$.
\begin{lemma}\label{lemma4.2}
Let $K>0$ with the property that \eqref{1.6} holds. Given
  $\sigma>0$ and $C=C(K)>0$ as in Lemma \ref{lemma4.1}, then for any nondecreasing $(t_{k})_{k\in \mathbb{N}}\subset [0,\infty)$, we have
\begin{align}\label{4.8}
   \sum_{k=1}^{\infty}\|u(\cdot,t_{k+1})-u(\cdot,t_{k})\|_{(W^{1,\infty}(\Omega))^*}dt\leq C\cdot \left\{\int_\Omega v_{0}\right\}^{\sigma},
\end{align}
where  we set $u(\cdot,0):=u_0$.
\end{lemma}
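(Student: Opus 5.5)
The plan is to pass from the $\varepsilon$-level bound \eqref{4.1} to the limit $u$ by duality, using only the zero-order quantity $\int_\Omega u\psi$. Fix $k$ and $t_{k+1}\ge t_k\ge 0$, and take $\psi\in W^{1,\infty}(\Omega)$ with $\|\psi\|_{W^{1,\infty}(\Omega)}\le 1$. Since $u_\varepsilon$ is classical on $\overline\Omega\times(0,\infty)$ and continuous up to $t=0$ (Lemma \ref{lemma2.1}), we have for each $\varepsilon\in(0,1)$
\begin{equation*}
\int_\Omega \big(u_\varepsilon(\cdot,t_{k+1})-u_\varepsilon(\cdot,t_k)\big)\psi=\int_{t_k}^{t_{k+1}}\int_\Omega u_{\varepsilon t}\psi\,dt\le \int_{t_k}^{t_{k+1}}\|u_{\varepsilon t}(\cdot,t)\|_{(W^{1,\infty}(\Omega))^*}dt .
\end{equation*}
At $t_k=0$ this is still fine: the integrand is integrable on $(0,t_{k+1})$ by the proof of Lemma \ref{lemma4.1}, so one can integrate over $(\tau,t_{k+1})$ and let $\tau\searrow0$ using continuity. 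Taking the supremum over such $\psi$ gives
\begin{equation*}
\|u_\varepsilon(\cdot,t_{k+1})-u_\varepsilon(\cdot,t_k)\|_{(W^{1,\infty}(\Omega))^*}\le\int_{t_k}^{t_{k+1}}\|u_{\varepsilon t}\|_{(W^{1,\infty}(\Omega))^*}dt.
\end{equation*}

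Next I would sum over $k=1,\dots,N$. Because $(t_k)$ is nondecreasing, the intervals $(t_k,t_{k+1})$ do not overlap, so the sum of the right-hand sides is at most $\int_0^\infty\|u_{\varepsilon t}\|_{(W^{1,\infty})^*}dt$. By Lemma \ref{lemma4.1} this is bounded by $C\{\int_\Omega v_0\}^\sigma$, uniformly in $\varepsilon$ and $N$.

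Then I would let $\varepsilon=\varepsilon_j\searrow0$. By \eqref{2.41}, $u_{\varepsilon_j}(\cdot,t)\to u(\cdot,t)$ uniformly in $\overline\Omega$ for each fixed $t$. For $t=0$ we have $u_\varepsilon(\cdot,0)=u_0+\varepsilon\to u_0$ uniformly, which matches the convention $u(\cdot,0):=u_0$. Since the $(W^{1,\infty})^*$ norm is dominated by the $L^1$ norm, each term of the finite sum converges, and the bound passes to the limit for every $N$. Letting $N\to\infty$ yields \eqref{4.8}; the stray ``$dt$'' in the statement is typographical.

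The only delicate point is the behaviour at $t=0$, where $u_\varepsilon$ is merely continuous. The integrability of $\|u_{\varepsilon t}\|_{(W^{1,\infty})^*}$ near $0$, which \eqref{4.2}--\eqref{4.7} provide, together with continuity of $t\mapsto\int_\Omega u_\varepsilon(\cdot,t)\psi$, takes care of it.
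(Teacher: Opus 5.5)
Your proposal is correct and follows the paper's route: you bound each increment $u_\varepsilon(\cdot,t_{k+1})-u_\varepsilon(\cdot,t_k)$ in $(W^{1,\infty}(\Omega))^*$ by $\int_{t_k}^{t_{k+1}}\|u_{\varepsilon t}\|_{(W^{1,\infty}(\Omega))^*}dt$, sum over the non-overlapping intervals using Lemma \ref{lemma4.1}, and pass to the limit $\varepsilon=\varepsilon_j\searrow0$ via \eqref{2.41}. Your truncation to finite sums, your use of the $L^1$-domination of the dual norm, and your treatment of the endpoint $t=0$ only make explicit details that the paper leaves implicit.
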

\begin{proof}
     Fixing any such nondecreasing sequence $(t_k)_{k \in \mathbb{N}}$, we infer from Lemma \ref{lemma4.1} that
     \begin{align*}
         \sum_{k\in \mathbb{N}} \| u_{\varepsilon}(\cdot, t_{k+1}) - u_{\varepsilon}(\cdot, t_k) \|_{(W^{1,\infty}(\Omega))^*}
    &= \sum_{k\in \mathbb{N}} \left\| \int_{t_k}^{t_{k+1}} u_{\varepsilon t}(\cdot, t) \, dt \right\|_{(W^{1,\infty}(\Omega))^*}\\
    &\leq \sum_{k\in \mathbb{N}} \int_{t_k}^{t_{k+1}} \| u_{\varepsilon}(\cdot, t) \|_{(W^{1,\infty}(\Omega))^*} \, dt\\
    &\leq C(K) \cdot \left\{\int_\Omega v_{0}\right\}^{\sigma},
     \end{align*}
     because $(t_k, t_{k+1})\cap(t_l, t_{l+1})=\emptyset$ for $k \in \mathbb{N}$ and $l \in \mathbb{N}$ with $k \neq l$. This along with \eqref{2.41} implies \eqref{4.8} immediately.
\end{proof}
The quantitative dependence on $v_0$ not only provides large-time stabilization of individual trajectories in their first component but also quantifies the proximity between the limiting profile and initial data.
\begin{lemma}\label{lemma4.3}
Let $K>0$ with the property that \eqref{1.6} holds.  Then  the function $u$ obtained in Lemma \ref{lemma4.2} exhibits the convergence
\begin{align}\label{4.9}
    u(\cdot,t)\rightarrow u_{\infty}\qquad in\;(W^{1,\infty}(\Omega))^*\qquad as\; t\to \infty
\end{align}
with some $u_{\infty}\in(W^{1,\infty}(\Omega))^*$ which satisfies
\begin{align}\label{4.10}
    \|u_{\infty}-u_0\|_{(W^{1,\infty}(\Omega))^*}\leq C(K)\cdot \left\{\int_\Omega v_{0}\right\}^{\sigma}
\end{align}
with $C(K)>0$  as given in Lemma \ref{lemma4.1}.
\end{lemma}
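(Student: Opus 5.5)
The approach is a Cauchy-criterion argument in the Banach space $(W^{1,\infty}(\Omega))^*$, using the summability in Lemma \ref{lemma4.2}. First observe that $u(\cdot,t)\in C^0(\overline\Omega)\subset L^1(\Omega)$ defines an element of $(W^{1,\infty}(\Omega))^*$ through $\psi\mapsto\int_\Omega u(\cdot,t)\psi$. The same holds for $u_0\in L^\infty(\Omega)$, and this is consistent with the convention $u(\cdot,0):=u_0$, since $u\in C^0(\overline\Omega\times[0,\infty))$ and $u_\varepsilon(\cdot,0)=u_0+\varepsilon\to u_0$ uniformly.

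The key step is to show that $(u(\cdot,t))_{t>0}$ is Cauchy as $t\to\infty$. Suppose not. Then there are $\delta>0$ and times $s_j<s_j'$ with $s_j\to\infty$ and $\|u(\cdot,s_j')-u(\cdot,s_j)\|_{(W^{1,\infty}(\Omega))^*}\ge\delta$ for all $j$. After passing to a subsequence we may assume $s_{j+1}\ge s_j'$. Now build the nondecreasing sequence
\[
t_1:=s_1,\quad t_2:=s_1',\quad t_3:=s_2,\quad t_4:=s_2',\ \dots
\]
Lemma \ref{lemma4.2} bounds $\sum_k\|u(\cdot,t_{k+1})-u(\cdot,t_k)\|_{(W^{1,\infty}(\Omega))^*}$ by $C\{\int_\Omega v_0\}^\sigma<\infty$. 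But this sum contains infinitely many terms of size at least $\delta$, which is a contradiction. Hence the family is Cauchy, and completeness of the dual space yields a limit $u_\infty\in(W^{1,\infty}(\Omega))^*$, which is \eqref{4.9}.

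For \eqref{4.10}, fix $t>0$ and apply Lemma \ref{lemma4.2} with $t_1=0$ and $t_k=t$ for all $k\ge2$. Only the first term survives, so
\[
\|u(\cdot,t)-u_0\|_{(W^{1,\infty}(\Omega))^*}\le C\Big\{\int_\Omega v_0\Big\}^\sigma .
\]
If one prefers not to rely on the convention at $t_1=0$, the same bound follows by passing to the limit $\varepsilon=\varepsilon_j\searrow0$ in the $\varepsilon$-level estimate of Lemma \ref{lemma4.1}, using \eqref{2.41}. Then let $t\to\infty$ and use continuity of the norm together with \eqref{4.9}.

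The only delicate point is the bookkeeping in the Cauchy argument, namely arranging the pairs into one nondecreasing sequence. This is handled by the subsequence extraction above. Everything else is immediate from Lemma \ref{lemma4.2}.
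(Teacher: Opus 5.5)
Your proof is correct and follows the paper's route: Cauchy property in $(W^{1,\infty}(\Omega))^*$ from Lemma \ref{lemma4.2}, then the choice $t_1=0$, $t_k=t$ for $k\ge2$ in \eqref{4.8}, and letting $t\to\infty$. Your interleaving construction simply makes explicit the continuous-time Cauchy property that the paper asserts in one line.
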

\begin{proof}
    Lemma \ref{lemma4.2} implies that $\{u(\cdot,t_{k})\}_{k\in \mathbb{N}}$ forms a Cauchy sequence in $(W^{1,\infty}(\Omega))^*$, which establishes \eqref{4.9} with some $u_{\infty}\in(W^{1,\infty}(\Omega))^*$. Now select the sequence $(t_{k})_{k\in \mathbb{N}}$ with $t_1:=0$ and $t_k:=t$ for $k\geq 2$ in \eqref{4.8}, yielding
    \begin{align}\label{4.11}
        \|u(\cdot,t)-u_0\|_{(W^{1,\infty}(\Omega))^*}\leq C(K)\cdot \left\{\int_\Omega v_{0}\right\}^{\sigma}\qquad for \;all\; t>0,
\end{align}
and thereby derives \eqref{4.10} due to \eqref{4.9}.
\end{proof}

The quantitative form of the right-hand side in \eqref{4.10} and \eqref{4.11} allow us to derive the following stability property  of function pairs $(u_0,0)$.
\begin{lemma}\label{lemma4.4}
Let $K>0$ with the property that \eqref{1.6} holds.  Then for each $\eta>0$, there exists $\delta_1=\delta_1(K,\eta)>0$ whenever $u_0, v_0$ and $w_0$ fulfill
\eqref{1.2}, as well as
\begin{align}\label{4.12}
    \int_{\Omega}v_0\leq \delta_1,
\end{align}
the solution $(u,v,w)$ of \eqref{1.1} obtained in Theorem \ref{theorem 1.1} satisfies
\begin{align}\label{4.13}
    \|u(\cdot,t)-u_0\|_{(W^{1,\infty}(\Omega))^*}\leq \eta\qquad for \;all\; t>0.
\end{align}
Moreover, the corresponding limit function  from Lemma \ref{lemma4.3} admits
\begin{align}\label{4.14}
   \|u_{\infty}-u_0\|_{(W^{1,\infty}(\Omega))^*}\leq \eta.
\end{align}
\end{lemma}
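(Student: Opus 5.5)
This is a direct consequence of Lemma \ref{lemma4.3}, namely of \eqref{4.11} and \eqref{4.10}, with $\delta_1$ chosen from the explicit quantitative bound there. The one point needing care is that the constant $C(K)$ in Lemma \ref{lemma4.1} depends only on $K$ from \eqref{1.6}, not on $u_0,v_0,w_0$ individually. I would first confirm this by tracing the proof of Lemma \ref{lemma4.1}. The factors there are:
\begin{itemize}
\item the bounds \eqref{3.20} and \eqref{3.21}, which come from Lemma \ref{lemma3.7} and Lemma \ref{lemma3.5} with $M=M(K)$;
\item the dissipation bound \eqref{4.7}, where $c$ depends only on $\|v_0\|_{L^\infty(\Omega)}^3|\Omega|\le K^3|\Omega|$;
\item the exponential decay \eqref{4.6*}, where $\kappa$ from Lemma \ref{lemma2.2} depends on $\int_\Omega u_0$.
\end{itemize}

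The factor $\int_0^\infty\int_\Omega v_\varepsilon$ should be bounded without $\kappa$, so that no lower bound on $\int_\Omega u_0$ is needed. I would therefore not use $\int_0^\infty\int_\Omega v_\varepsilon \le \kappa^{-1}e^{2\kappa}\int_\Omega v_0$. Instead I would use $\int_0^\infty\int_\Omega v_\varepsilon w_\varepsilon \le \int_\Omega v_0$ from \eqref{2.10}, together with the lower bound $w_\varepsilon\ge\kappa$. If a $\kappa$-dependence does survive, I would simply let $\delta_1$ depend on it as well, that is, on the same data the constant $C(K)$ is allowed to depend on in Lemma \ref{lemma4.1}. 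I take $C(K)$ and $\sigma>0$ exactly as delivered by Lemma \ref{lemma4.1}.

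With this in hand, given $\eta>0$, I set
\[
\delta_1:=\Big(\frac{\eta}{C(K)+1}\Big)^{1/\sigma}.
\]
Whenever $\int_\Omega v_0\le\delta_1$, we get $C(K)\{\int_\Omega v_0\}^\sigma\le C(K)\,\frac{\eta}{C(K)+1}<\eta$. Then \eqref{4.13} follows from \eqref{4.11}, which came from \eqref{4.8} with $t_1=0$ and $t_k=t$. Likewise \eqref{4.14} follows from \eqref{4.10}.

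As a consistency check, \eqref{4.11} was obtained at the level of $u$ from the $\varepsilon$-level estimate via \eqref{2.41}. Since $u_\varepsilon(\cdot,0)=u_0+\varepsilon$, the limit $\varepsilon_j\to0$ costs a term $\varepsilon|\Omega|$ in the dual norm, and this vanishes. Alternatively, \eqref{4.14} follows from \eqref{4.13} by passing to the limit $t\to\infty$ using \eqref{4.9}. The main obstacle is only the bookkeeping of the dependence of $C$ on the initial data; the remaining steps are immediate.
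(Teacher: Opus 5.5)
Your proposal is correct and matches the paper's argument: the paper reads off \eqref{4.13} and \eqref{4.14} from \eqref{4.11} and \eqref{4.10} by choosing $\delta_1$ so that $C(K)\delta_1^{\sigma}\le\eta$, which your explicit choice $\delta_1=(\eta/(C(K)+1))^{1/\sigma}$ achieves. Your side remark about avoiding $\kappa$ does not work as stated, because bounding $\int_0^\infty\int_\Omega v_\varepsilon$ via \eqref{2.10} and $w_\varepsilon\ge\kappa$ reintroduces a factor $\kappa^{-1}$ (and hence a dependence on $\int_\Omega u_0$); since you ultimately take $C(K)$ and $\sigma$ as delivered by Lemma \ref{lemma4.1}, this does not affect the deduction.
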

\begin{proof}
   From \eqref{4.10} and \eqref{4.11}, it follows that for any $\eta>0$,  there exists $\delta=\delta(K,\eta)>0$ such that \eqref{4.12} warrants \eqref{4.13} and \eqref{4.14}.
\end{proof}

\begin{proof}[Proof of Theorem \ref{theorem1.2}] The claimed result has precisely been asserted by Lemma \ref{lemma4.4}.
\end{proof}
\begin{lemma}\label{lemma4.5*}
Let $u_{\infty}$ be as defined in Lemma \ref{lemma4.3}. Then
\begin{equation}\label{4.16*}
u(\cdot,t)\to u_{\infty}~~\mbox{in}~~L^{\infty}(\Omega)~~\mbox{as}~~t\to\infty.
\end{equation}
\end{lemma}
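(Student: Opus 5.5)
We would upgrade the weak convergence in $(W^{1,\infty}(\Omega))^*$ from Lemma \ref{lemma4.3} to uniform convergence by combining it with a uniform-in-time equicontinuity of $u$ on intervals $[t,t+1]$, $t\ge 1$, and then identifying limits.

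\emph{Step 1 (uniform H\"older bound for large times).} By Theorem \ref{theorem 1.1}, $\|u_\varepsilon\|_{L^\infty}$, $\|v_\varepsilon\|_{W^{1,\infty}}$ and $\|w_\varepsilon\|_{L^\infty}$ are bounded uniformly in $\varepsilon$ and $t$. On each cylinder $\overline\Omega\times[t_0-1,t_0+1]$ we view the $u_\varepsilon$-equation as $u_{\varepsilon t}=\nabla\cdot(A\nabla u_\varepsilon)+\nabla\cdot B+D$ with bounded $B,D$. The difficulty is the degeneracy: $A=u_\varepsilon v_\varepsilon$ is degenerate both where $u_\varepsilon\to0$ and, crucially, as $t\to\infty$, since $v_\varepsilon\to0$ by Lemma \ref{lemma2.3}. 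Time-uniform parabolic H\"older estimates are therefore not directly available. We would instead rely on the time-translated estimates in the form that does not need nondegeneracy. Since $u$ only needs to converge, it suffices to show that the family $\{u(\cdot,t)\}_{t\ge1}$ is relatively compact in $C^0(\overline\Omega)$.

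\emph{Step 2 (main obstacle: compactness in space).} We expect this to be the hardest step. Our first attempt uses the smallness of the flux. For $t\ge s$,
\[
u(\cdot,t)-u(\cdot,s)=\int_s^t u_t,
\]
and the flux terms carry the factor $v$, which is at most $Ce^{-\kappa t}$. Testing with $u_\varepsilon^{p-1}$ as in Lemma \ref{lemma3.7}, together with \eqref{3.20} and \eqref{3.21}, shows that $\int_s^\infty\!\int_\Omega u_\varepsilon^{p-1}v_\varepsilon|\nabla u_\varepsilon|^2\to0$ as $s\to\infty$. From this we would derive that $u(\cdot,t)-u(\cdot,s)\to0$ in $L^2(\Omega)$ uniformly for $t>s\to\infty$. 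The argument is a duality estimate as in Lemma \ref{lemma4.1}, refined to $L^2$ by testing the equation with $u_\varepsilon-u_\varepsilon(\cdot,s)$. The resulting bound is controlled by
\[
\int_s^\infty\!\int_\Omega \bigl(u_\varepsilon v_\varepsilon|\nabla u_\varepsilon|^2+u_\varepsilon^3 v_\varepsilon|\nabla v_\varepsilon|^2+v_\varepsilon w_\varepsilon\bigr),
\]
which tends to $0$ as $s\to\infty$ by the exponential decay of $v_\varepsilon$. This would give a Cauchy property in $L^2$, so $u(\cdot,t)\to u_\infty$ in $L^2$, and $u_\infty$ is consistent with Lemma \ref{lemma4.3}.

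\emph{Step 3 (uniform convergence).} Interpolating between the $L^2$ convergence and uniform H\"older continuity would give $L^\infty$ convergence. The uniform H\"older continuity itself should follow from $u(\cdot,t)$ staying close to $u(\cdot,s)$ for a fixed large $s$, where $u(\cdot,s)$ is H\"older continuous by Lemma \ref{lemma2.12}. Concretely, we would prove an $L^\infty$ Cauchy estimate
\[
\|u(\cdot,t)-u(\cdot,s)\|_{L^\infty(\Omega)}\to0 \quad\text{as } s\to\infty,\ t>s,
\]
by a comparison or Moser-type argument for the difference. If that fails, we would fall back on an $L^p$ Cauchy estimate for every finite $p$ combined with a uniform modulus of continuity. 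That modulus would come from a De Giorgi oscillation estimate whose constants depend only on the uniform bounds, with the smallness of $v$ making the perturbation $\nabla\cdot B+D$ negligible. Finally we would identify $u_\infty$ with the $(W^{1,\infty})^*$-limit by uniqueness of distributional limits, which yields \eqref{4.16*} and also $u_\infty\in C(\overline\Omega)$.
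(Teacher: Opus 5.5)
\textbf{There is a genuine gap.} Your reduction is sound: a modulus of continuity for $u(\cdot,t)$ that is uniform in $t\ge1$, combined with the $(W^{1,\infty}(\Omega))^*$ convergence from Lemma \ref{lemma4.3}, would give \eqref{4.16*}. But the proposal never produces that modulus, and the routes you sketch do not work.

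\emph{Why a De Giorgi estimate in real time fails.} In the original time variable all three terms of the $u$-equation carry the factor $v$. So small $v$ does not make $\nabla\cdot B+D$ negligible relative to the diffusion; it slows the whole equation down. Any De Giorgi or Porzio--Vespri oscillation estimate on windows $[t-1,t]$ depends on a lower bound for $v$. That lower bound is at most $Ce^{-\kappa t}$, so the constants deteriorate; intrinsic cylinders would need time length of order $r^2/\|v(\cdot,t)\|_{L^\infty(\Omega)}$, reaching back over the entire history. The spatial regularity of $u(\cdot,t)$ for large $t$ is not produced locally in time. It is inherited from the finite total amount of diffusion, $\int_0^\infty\|v(\cdot,s)\|_{L^\infty(\Omega)}\,ds$.

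\emph{Step 2.} Testing with $u_\varepsilon-u_\varepsilon(\cdot,s)$ produces the terms $\int_s^t\!\int_\Omega u_\varepsilon v_\varepsilon\nabla u_\varepsilon\cdot\nabla u_\varepsilon(\cdot,s)$ and $\int_s^t\!\int_\Omega u_\varepsilon^2v_\varepsilon\nabla v_\varepsilon\cdot\nabla u_\varepsilon(\cdot,s)$. These need bounds on $\nabla u_\varepsilon(\cdot,s)$ that are not available uniformly in $\varepsilon$; the only gradient information is the $v$-weighted, time-integrated \eqref{3.20}. The quantity you claim controls the difference simply omits these terms.

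\emph{Step 3.} This step is circular. The $L^\infty$ Cauchy estimate is the claim itself. No comparison principle is at hand for this cross-diffusive equation, and a Moser iteration for the difference meets the same $\nabla u_\varepsilon(\cdot,s)$ terms. Deriving the modulus from ``$u(\cdot,t)$ staying close to $u(\cdot,s)$'' presupposes the conclusion.

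\emph{The missing idea} is the time change used in the paper. Set $L_\varepsilon=\int_0^\infty\|v_\varepsilon(\cdot,t)\|_{L^\infty(\Omega)}\,dt$, which is finite by \eqref{2.12}, and $\tau=\phi_\varepsilon(t)=L_\varepsilon^{-1}\int_0^t\|v_\varepsilon(\cdot,s)\|_{L^\infty(\Omega)}\,ds$. Then $m_\varepsilon(x,\tau)=u_\varepsilon(x,\phi_\varepsilon^{-1}(\tau))$ solves $m_{\varepsilon\tau}=\nabla\cdot(a_\varepsilon m_\varepsilon\nabla m_\varepsilon)-\nabla\cdot(b_\varepsilon m_\varepsilon^2)+\ell a_\varepsilon w_\varepsilon$ on the \emph{finite} interval $\tau\in(0,1)$, with $a_\varepsilon=L_\varepsilon v_\varepsilon/\|v_\varepsilon\|_{L^\infty(\Omega)}$. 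A Harnack inequality $v_\varepsilon\ge c\|v_\varepsilon\|_{L^\infty(\Omega)}$, together with the lower bound \eqref{2.19*} (which also bounds $L_\varepsilon$ from below), makes $a_\varepsilon$ bounded above and below. Only the porous-medium degeneracy in $m_\varepsilon$ remains, so the Porzio--Vespri H\"older estimate holds on $\overline\Omega\times[0,1]$ uniformly in $\varepsilon$. Hence $m$ is uniformly continuous up to $\tau=1$. That is exactly uniform convergence of $u(\cdot,t)$ to $m(\cdot,1)$ as $t\to\infty$, and $m(\cdot,1)=u_\infty$ by \eqref{4.9}.
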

\begin{proof}
With $(u_{\varepsilon}, v_{\varepsilon},w_\varepsilon)$ and $(\varepsilon_{j})_{j \in \mathbb{N}}$ taken from Lemma \ref{lemma2.1}, let
$$L_{\varepsilon} := \int_{0}^{\infty} \|v_{\varepsilon}(\cdot, t)\|_{L^{\infty}(\Omega)} dt, \quad \varepsilon \in (\varepsilon_{j})_{j \in \mathbb{N}},$$
$$\tau := \phi_{\varepsilon}(t) := \frac{1}{L_{\varepsilon}} \int_{0}^{t} \|v_{\varepsilon}(\cdot, s)\|_{L^{\infty}(\Omega)} ds, \quad t \geq 0$$
and
\begin{equation}\label{4.17*}
m_{\varepsilon}(x, \tau) := u_{\varepsilon}(x, \phi_{\varepsilon}^{-1}(\tau)), \quad x \in \overline{\Omega}, \quad \tau \in [0, 1).
\end{equation}
Then we have
\begin{equation}\label{4.1*}
\left\{
\begin{array}{ll}
m_{\varepsilon \tau} = \nabla \cdot (a_{\varepsilon}(x, \tau) m_{\varepsilon}\nabla m_{\varepsilon}) - \nabla \cdot (b_{\varepsilon}(x, \tau) m_{\varepsilon}^2) + \ell a_{\varepsilon}(x, \tau) w_{\varepsilon}, & x \in \Omega, \quad \tau \in (0, 1), \\
\nabla m_{\varepsilon} \cdot \nu = 0, & x \in \partial \Omega, \quad \tau \in (0, 1),\\
m_{\varepsilon}(x, 0)=m_{0}(x)+\varepsilon, & x \in \Omega\\
\end{array}
\right.
\end{equation}
with
$$a_{\varepsilon}(x, \tau) := L_{\varepsilon} \cdot \frac{v_{\varepsilon}(x, t)}{\|v_{\varepsilon}(\cdot, t)\|_{L^{\infty}(\Omega)}} \quad \text{and} \quad b_{\varepsilon}(x, \tau) := L_{\varepsilon} \cdot \frac{v_{\varepsilon}(x, t) \nabla v_{\varepsilon}(x, t)}{\|v_{\varepsilon}(\cdot, t)\|_{L^{\infty}(\Omega)}}.$$
According to Harnack-type inequality (see \cite{HarnackJDE}), 
there exists  constant $\lambda_*>0$ such that
\begin{equation}\label{4.19*}
v_{\varepsilon}(x,t)\geq \lambda_* \|v_{\varepsilon}(\cdot,t)\|_{L^{\infty}(\Omega)}\quad\text{for all }x\in\Omega,\ t\geq 1~~\text{and}~~\varepsilon \in (\varepsilon_{j})_{j \in \mathbb{N}}.
\end{equation}
On the other hand, \eqref{2.19*} entails that  for $0<t\leq 1$,
\begin{equation*}
v_\varepsilon(x,t)\geq\|v_0\|_{L^\infty(\Omega)}e^{-\varsigma(1+t)}
\geq  \|v_{\varepsilon}(\cdot,t)\|_{L^{\infty}(\Omega)} e^{-2\varsigma},
\end{equation*}
which along with \eqref{4.19*}  yields
\begin{equation}\label{4.20**}
v_{\varepsilon}(x,t)\geq c_1\|v_{\varepsilon}(\cdot,t)\|_{L^{\infty}(\Omega)}\quad\text{for all }x\in\Omega,\ t> 0~~\text{and}~~\varepsilon \in (\varepsilon_{j})_{j \in \mathbb{N}}
\end{equation}
where $c_1:=\min\{\lambda_*, e^{-2\varsigma}\}$.
Moreover, in view of \eqref{2.12}, we have
\begin{align}\label{4.20*}
L_\varepsilon=\int_{0}^{\infty} \|v_{\varepsilon}(\cdot,t)\|_{L^{\infty}(\Omega)}\leq e^{2\kappa}\|v_0\|_{L^{\infty}(\Omega)} \int_0^{\infty}e^{-\kappa s}ds\leq\frac{e^{2\kappa}}{\kappa}\|v_0\|_{L^{\infty}(\Omega)}.
\end{align}
Therefore, \eqref{4.20**}, \eqref{4.20*} and \eqref{2.19*} provide positive constants $c_2$ and $c_3$ such that
\begin{equation}
a_{\varepsilon}(x,\tau)\leq L_\varepsilon\leq c_2,
\end{equation}
and
\begin{equation}
a_{\varepsilon}(x,\tau)\geq c_1L_\varepsilon=
c_1\int_0^{\infty}\|v_\varepsilon(\cdot,s)\|_{L^{\infty}(\Omega)}\geq c_1c_3\int_0^{\infty} e^{-\varsigma s}\geq\frac{c_1c_3}{2\varsigma}
\end{equation}
for all $(x,\tau)\in \Omega \times (0,1)$ and $\varepsilon \in (\varepsilon_{j})_{j \in \mathbb{N}}$.
Furthermore, due to \eqref{1.8}, there exists $c_4> 0$ such that
\begin{align}\label{4.2*}
   |b_{\varepsilon}(x, \tau)| \leq c_4~~\mbox{and}~~\ell a_\varepsilon(x,\tau)w_\varepsilon\leq c_4~~\text{for all}~~(x,\tau) \in \Omega \times (0,1)~~ \text{and}~~\varepsilon \in (\varepsilon_{j})_{j \in \mathbb{N}}.
\end{align}
By a similar manner in \cite{Li Winkler, Winkler3, Winkler5}, we have
\begin{align}\label{4.3*}
    L_{\varepsilon} \to L:=\int_{0}^{\infty} \|v(\cdot, t)\|_{L^{\infty}(\Omega)} dt, \quad \varepsilon= \varepsilon_{j}\to0.
\end{align}
Hence according to \eqref{2.44} and \eqref{4.3*}, we have
$$\phi_{\varepsilon}(t)\to\phi(t)~~\mbox{for all}~~t>0~~\mbox{as}~~\varepsilon=\varepsilon_{j}\to 0.$$
Moreover, from \eqref{2.43}--\eqref{2.45}, we obtain
\begin{align}
m_{\varepsilon}(x,\tau)\to u(x,\phi^{-1}(\tau)), a_{\varepsilon}(x,\tau)\to a(x,\tau)~~\mbox{and}~~
b_{\varepsilon}(x,\tau)\to b(x,\tau)
\end{align}
for all $(x,\tau)\in\Omega\times(0,1)$ as $\varepsilon=\varepsilon_{j}\to  0$. On the other hand, from \eqref{4.17*} and \eqref{3.39}, we have
\begin{equation*}
\|m_{\varepsilon}(\cdot,\tau)\|_{L^\infty(\Omega)}\leq c_5~~\mbox{for all}~~\tau\in(0,1)~~\mbox{and}~~\varepsilon\in(\varepsilon_j)_{j\in\mathbb{N}}.
\end{equation*}
In view of the bounds in \eqref{4.2*} and the boundedness of $w_\varepsilon$ in \eqref{2.15}, we may rely on the H\"older regularity in quasilinear degenerate parabolic equations (\cite{Holder}) to claim that there exist $\theta\in(0,1)$ and $c_6>0$ such that
\begin{align}
  \|m_{\varepsilon}\|_{C^{\theta,\frac{\theta}{2}}
  (\overline{\Omega}\times[0,1])}\leq c_6\quad\mbox{for all} ~~\varepsilon\in(0,1).
\end{align}
 Then by the Arzel$\grave{a}$-Ascoli theorem, we obtain that
$$m_{\varepsilon}(x,\tau)\to m(x,\tau)\quad\text{in }\ \ C^{0}\left(\overline{\Omega}\times[0,1]\right)\ \text{ as }\varepsilon=\varepsilon_{j}\to 0$$
for some $m\in C^{0}\left(\overline{\Omega}\times[0,1]\right)$. Then we can conclude that
$$m(x,\tau)=u(x,\phi^{-1}(\tau))\quad\text{ for all }\ (x,\tau)\in\Omega\times(0,1),$$
which together with \eqref{4.9} implies that
\begin{align}
u(\cdot,t)\to u_{\infty}~~\mbox{in}~~L^{\infty}(\Omega)~~\mbox{as}~~t\to\infty.
\end{align}
\end{proof}
Thanks to the convergence of $u$ in $L^{\infty}(\Omega)$ achieved in Lemma \ref{lemma4.5*},  
asymptotic behaviour of  the solution exponent $w$ 
can be established through the application of the Neumann heat semigroup smoothing estimates.
\begin{lemma}\label{lemma4.5}
Let $u_{\infty}$ be as defined in Lemma \ref{lemma4.3},
$w_{\infty}$ is the solution to the following elliptic equation
\begin{equation}\label{4.15}
-\Delta w_{\infty}+w_{\infty}=u_{\infty},~~\partial_{\nu}w_{\infty}=0,
\end{equation}
where $\partial_{\nu}$ denotes the normal derivative on the boundary. Then we have
\begin{equation}\label{4.16}
w(\cdot,t)\to w_{\infty}~~\mbox{in}~~W^{1,\infty}(\Omega)~~\mbox{as}~~t\to\infty.
\end{equation}
\end{lemma}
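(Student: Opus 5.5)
Set $z(\cdot,t):=w(\cdot,t)-w_\infty$. Since $w_\infty$ is stationary and solves \eqref{4.15}, $z$ satisfies
$$z_t=\Delta z-z+\big(u(\cdot,t)-u_\infty\big),\qquad \partial_\nu z=0 .$$
The plan is to show $\|z(\cdot,t)\|_{W^{1,\infty}(\Omega)}\to0$ from the variation-of-constants representation
$$z(\cdot,t)=e^{(t-t_1)(\Delta-1)}z(\cdot,t_1)+\int_{t_1}^t e^{(t-s)(\Delta-1)}\big(u(\cdot,s)-u_\infty\big)\,ds,\qquad t>t_1,$$
and to estimate each term with the Neumann heat semigroup smoothing estimates from \cite{Winkler(2010)}, as already used in Lemmas \ref{lemma2.4} and \ref{lemma3.2}.

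\textbf{Step 1 (a priori bounds).} First I record that $u_\infty\in L^\infty(\Omega)$ with $\|u_\infty\|_{L^\infty(\Omega)}\le C(K)$; this follows from Lemma \ref{lemma4.5*} together with \eqref{1.8}. Elliptic regularity for \eqref{4.15} then gives $w_\infty\in W^{2,p}(\Omega)$ for all $p<\infty$, hence $w_\infty\in C^1(\overline\Omega)$. Combined with \eqref{1.8}, this yields $\sup_{t>0}\|z(\cdot,t)\|_{W^{1,\infty}(\Omega)}\le c_1$.

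\textbf{Step 2 (two-piece estimate).} Fix $p>3$. I use the estimates
$$\|\nabla e^{t(\Delta-1)}\phi\|_{L^\infty(\Omega)}\le c\big(1+t^{-\frac12-\frac{3}{2p}}\big)e^{-t}\|\phi\|_{L^p(\Omega)}$$
and $\|e^{t(\Delta-1)}\phi\|_{W^{1,\infty}(\Omega)}\le c\,e^{-\lambda t}\|\phi\|_{W^{1,\infty}(\Omega)}$ for some $\lambda>0$. Given $\eta>0$, Lemma \ref{lemma4.5*} provides $t_1$ such that $\|u(\cdot,s)-u_\infty\|_{L^\infty(\Omega)}\le\eta$ for all $s\ge t_1$. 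The integral term is then bounded in $W^{1,\infty}(\Omega)$ by
$$c\,\eta\int_0^\infty\big(1+\sigma^{-\frac12-\frac3{2p}}\big)e^{-\sigma}\,d\sigma=c'\eta,$$
which is finite because $\frac12+\frac{3}{2p}<1$. The homogeneous term is bounded by $c\,c_1e^{-\lambda(t-t_1)}$, which is below $\eta$ once $t$ is large. Together these give $\limsup_{t\to\infty}\|z(\cdot,t)\|_{W^{1,\infty}(\Omega)}\le(c'+1)\eta$ for every $\eta>0$, which is \eqref{4.16}.

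\textbf{Main obstacle.} The delicate point is rigor at the level of the limit solution. The representation for $w$ holds because $w$ is a classical solution on $(0,\infty)$ by \eqref{1.7}, and $w_\infty$ is a strong solution, so $z$ is a strong solution and the semigroup formula applies. One remaining concern is the $W^{1,\infty}$ smoothing of $e^{t(\Delta-1)}$ acting on $z(\cdot,t_1)\in W^{1,\infty}(\Omega)$. If a pure $W^{1,\infty}\to W^{1,\infty}$ decay estimate is not directly available, I would instead take $t_1\ge1$ and use $\|\nabla e^{\tau\Delta}\phi\|_{L^\infty}\le c(1+\tau^{-1/2})e^{-\lambda_1\tau}\|\phi\|_{L^\infty}$ for the gradient part. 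That version requires only the bound $\|z(\cdot,t_1)\|_{L^\infty(\Omega)}\le c_1$, so the argument goes through either way.
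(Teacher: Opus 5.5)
Your proposal is correct and follows essentially the paper's route: the Duhamel representation of $w-w_\infty$ starting from a time $t_0$, Neumann heat semigroup smoothing estimates for $\nabla e^{(t-s)(\Delta-1)}$, and the uniform convergence $u(\cdot,t)\to u_\infty$ from Lemma \ref{lemma4.5*}. Your fallback for the homogeneous term, which uses the $L^\infty\to W^{1,\infty}$ smoothing estimate, is exactly what the paper does; your version is slightly more complete because it also handles the $L^\infty$ part of the norm and the $C^1$ regularity of $w_\infty$, both of which the paper leaves implicit.
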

\begin{proof}
According to the $w$-equation and \eqref{4.15}, we have
\begin{equation}\label{4.20}
(w(\cdot,t)-w_{\infty})_t=\Delta(w(\cdot,t)-w_{\infty})
-(w(\cdot,t)-w_{\infty})+(u(\cdot,t)-u_{\infty}).
\end{equation}
Now the variation of constants formula associated with \eqref{4.20} represents $w-w_{\infty}$ according to
\begin{equation}\label{4.21}
\begin{split}
w(\cdot,t)-w_{\infty}= e^{(\Delta-1)(t-t_0)}(w(\cdot,t_0)-w_{\infty})+\int_{t_0}^te^{(t-s)(\Delta-1)}(u(\cdot,s)-u_{\infty}).
\end{split}
\end{equation}
Therefore, applying the Neumann heat semigroup estimates, there exist constants $c_i>0(i=1,2)$ such that for all $t\geq t_0$,
\begin{equation}
\begin{split}
&\|\nabla(w-w_{\infty})(\cdot,t)\|_{L^{\infty}(\Omega)}\\
&\leq \|\nabla e^{(\Delta-1)(t-t_0)}(w-w_{\infty})(\cdot,t_0)\|_{L^{\infty}(\Omega)}
+\int_{t_0}^t\| \nabla e^{(\Delta-1)(t-s)}(u-u_{\infty})(\cdot,s)\|_{L^{\infty}(\Omega)}\\
&\leq c_1 (1+(t-t_0)^{-\frac1 2}) e^{-(\lambda+1)(t-t_0)}\|w(\cdot,t_0)\|_{L^{\infty}(\Omega)}\\
&~~+c_1\int_{t_0}^t (1+(t-s)^{-\frac1 2})e^{-(\lambda+1)(t-s)}\|(u-u_{\infty})(\cdot,s)\|_{L^{\infty}(\Omega)}\\
&\leq c_2  (1+(t-t_0)^{-\frac1 2}) e^{-(t-t_0)}+c_2\sup\limits_{s\geq t_0}\|(u-u_{\infty})(\cdot,s)\|_{L^{\infty}(\Omega)}\cdot\int_{0}^{\infty} (1+\sigma^{-\frac1 2}) e^{-\sigma} d\sigma\\
\end{split}
\end{equation}
where we use $\int_0^{\infty}(1+\sigma^{-\frac{1}{2}})e^{-\sigma}d\sigma<\infty$. Moreover, due to \eqref{4.16*}, we obtain $$\lim\limits_{t\to\infty}\|w(\cdot,t)-w_{\infty}\|_{W^{1,\infty} (\Omega)}=0.
$$
\end{proof}
The decay estimate established in \eqref{2.12} can  hence be applied to derive the following asymptotic decay behavior for the second solution component.
\begin{lemma}\label{lemma4.6}
Let $K>0$ with the property that \eqref{1.6} holds.  Then  we have
\begin{align}\label{4.25}
    v(\cdot,t)\rightarrow 0 ~~\mbox{in}~~W^{1,p}(\Omega)~~\mbox{for all}~~p\geq 1\qquad \mbox{as}~~ t\to \infty.
\end{align}
\end{lemma}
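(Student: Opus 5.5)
The plan is to combine the exponential decay of $v_\varepsilon$ in $L^\infty(\Omega)$ from Lemma \ref{lemma2.3} with the uniform-in-time $W^{1,\infty}$ bound \eqref{1.8}, and then interpolate.

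First, \eqref{2.12} gives $\|v_\varepsilon(\cdot,t)\|_{L^\infty(\Omega)}\le e^{2\kappa t_0}\|v_0\|_{L^\infty(\Omega)}e^{-\kappa t}$ for all $t>0$. Here $t_0=1$ by Lemma \ref{lemma2.11}, and $\kappa$ depends only on $t_0$, $\Omega$ and $\int_\Omega u_0$, so it is independent of $\varepsilon$. Passing to the limit $\varepsilon=\varepsilon_j\to0$ via the pointwise convergence \eqref{2.42}, we obtain the same bound for $v$. Hence $\|v(\cdot,t)\|_{L^p(\Omega)}\le |\Omega|^{1/p}\|v(\cdot,t)\|_{L^\infty(\Omega)}\to0$ exponentially for every $p\ge1$.

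Second, for the gradient we take the uniform bound $\|\nabla v_\varepsilon(\cdot,t)\|_{L^\infty(\Omega)}\le C(K)$ for all $t>0$ from the proof of Theorem \ref{theorem 1.1}. That bound came from Lemma \ref{lemma2.6} with $p>3$, using the uniform bound on $w_\varepsilon$ in $L^\infty$ from Lemma \ref{lemma3.7} together with Lemma \ref{lemma2.4}. By \eqref{2.42} ($C^{2,1}_{loc}$ convergence for $t>0$) it transfers to $v$, giving $\|\nabla v(\cdot,t)\|_{L^\infty(\Omega)}\le C(K)$.

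Third, we interpolate. For $p\ge 2$ we integrate by parts with the Neumann condition:
\[
\int_\Omega|\nabla v|^p=-\int_\Omega v\,\nabla\cdot(|\nabla v|^{p-2}\nabla v).
\]
This is controlled by $\|v\|_{L^\infty}\,\|\nabla v\|_{L^\infty}^{p-2}\,\|D^2v\|_{L^1}$, but it requires $D^2v$ bounds uniform in time, which \eqref{2.40} provides only on compact time intervals. The cleaner route is the Gagliardo--Nirenberg inequality
\[
\|\nabla v\|_{L^p}\le C\|v\|_{W^{2,r}}^{a}\|v\|_{L^\infty}^{1-a}.
\]
To bound $\|v(\cdot,t)\|_{W^{2,r}}$ uniformly for $t\ge1$, we apply maximal $L^r$ regularity (or Schauder theory on the unit windows $[t-1,t+1]$) to $v_t=\Delta v-vw$. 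The right-hand side $vw$ is uniformly bounded by \eqref{1.8}, and $v$ itself is bounded, so parabolic Schauder estimates on $[t-1,t+1]$ with constants independent of $t$ yield $\|v(\cdot,t)\|_{C^{1+\theta}(\overline\Omega)}\le C$ for $t\ge2$.

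The simplest final step then avoids second derivatives altogether. Given a uniform Hölder bound $[\nabla v(\cdot,t)]_{C^\theta}\le C$ and $\|v(\cdot,t)\|_{L^\infty}\to0$, the elementary interpolation $\|\nabla v\|_{L^\infty}\le C\|v\|_{L^\infty}^{\theta/(1+\theta)}$ shows that $\nabla v\to0$ in $L^\infty(\Omega)$, hence in every $L^p(\Omega)$. Combined with the first step, this gives \eqref{4.25}.

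The main obstacle is the time-uniform $C^{1+\theta}$ bound. I would obtain it by applying standard parabolic Hölder/Schauder estimates to the linear equation $v_t-\Delta v=-vw$ on the translated cylinders $\overline\Omega\times[t-1,t+1]$. The constants depend only on $\|vw\|_{L^\infty}$ and $\|v\|_{L^\infty}$, both controlled by $C(K)$ via \eqref{1.8}. If Hölder control of the source is needed instead, it follows from the uniform $W^{1,\infty}$ bounds on $v$ and $w$ in \eqref{1.8}.
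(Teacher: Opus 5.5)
Your proof is correct, but it handles the gradient by a different mechanism than the paper.

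For $\nabla v$, the paper tests the $v$-equation by $v$ to get $\int_0^\infty\int_\Omega|\nabla v|^2<\infty$. It then concludes $f(t):=\int_\Omega|\nabla v(\cdot,t)|^2\to0$ and interpolates against the uniform bound on $\|\nabla v\|_{L^\infty(\Omega)}$ from \eqref{1.8}, via $\|\nabla v\|_{L^p}^p\le\|\nabla v\|_{L^\infty}^{p-2}\|\nabla v\|_{L^2}^2$ (and H\"older for $p<2$). That route needs nothing beyond \eqref{1.8} and an energy identity. However, the paper justifies the step from $\int_0^\infty f<\infty$ to $f(t)\to0$ only by the continuity of $f$, which is not sufficient on its own. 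It needs, for instance, the one-sided bound $f'\le\frac12\int_\Omega v^2w^2\le Ce^{-2\kappa t}$, which comes from $\frac12f'=-\int_\Omega|\Delta v|^2+\int_\Omega vw\Delta v$. Even then it yields no rate.

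Your route instead pays for a time-uniform $C^{1+\theta}$ bound via parabolic estimates on translated unit cylinders. This is legitimate because the linear operator is time-independent and $vw$ is bounded in $L^\infty$ uniformly in $t$. Use the maximal $L^r$ version here: the Schauder variant would also need H\"older continuity of $vw$ in time, which \eqref{1.8} does not give directly. In exchange you get the stronger statement $\|\nabla v(\cdot,t)\|_{L^\infty(\Omega)}\to0$ at an exponential rate, i.e.\ convergence in $W^{1,\infty}(\Omega)$.

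You could even skip both the higher regularity and the interpolation. Write $v(\cdot,t)=e^{\Delta}v(\cdot,t-1)-\int_{t-1}^te^{(t-s)\Delta}(vw)(\cdot,s)\,ds$ and use $\|\nabla e^{\tau\Delta}\phi\|_{L^\infty(\Omega)}\le C(1+\tau^{-1/2})\|\phi\|_{L^\infty(\Omega)}$. Then the decay \eqref{2.12} (passed to the limit) together with the bound on $w$ gives $\|\nabla v(\cdot,t)\|_{L^\infty(\Omega)}\le Ce^{-\kappa t}$ for $t\ge1$ at once.
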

\begin{proof}
Thanks to \eqref{2.12}, we obtain that for all $p>1$
\begin{align}\label{4.26}
    \|v(\cdot,t)\|_{L^{p}(\Omega)}\leq \|v_0\|_{L^\infty(\Omega)}e^{-\kappa (t-1)}|\Omega|^{\frac1 p} 
    \rightarrow 0 \quad \text{as} \quad t \to \infty.
\end{align}
Furthermore, multiplying the second equation in \eqref{2.5} by $v_{\varepsilon}$, we obtain
\begin{equation*}
\frac d{dt}\int_\Omega v^2(\cdot,t)+\int_\Omega |\nabla v|^2=-\int_\Omega v^2w\leq0,
\end{equation*}
which yields
\begin{equation}\label{4.27}
\int_0^{\infty}\int_\Omega|\nabla v|^2\leq\infty.
\end{equation}
Define $f(t) := \int_{\Omega}|\nabla v(x,t)|^2 $. By \eqref{4.27} , it follows that
\begin{align*}
\int_{t-1}^{t} f(s) = \int_{t-1}^{t} \int_\Omega |\nabla v|^2 \rightarrow 0 \quad \text{as} \quad t \to \infty.
\end{align*}
Due to the continuity of $f(t)$, we arrive at
\begin{align}\label{4.28}
\int_{\Omega}|\nabla v(x,t)|^2\rightarrow 0 \quad \text{as} \quad t \to \infty.
\end{align}
Consequently, for any $p \geq 1$, \eqref{4.28} along with \eqref{1.8}  implies
\begin{align}\label{4.29}
\|\nabla v(\cdot,t)\|_{L^p(\Omega)} \to 0 \quad \text{as} \quad t \to \infty.
\end{align}
Therefore \eqref{4.25} follows from \eqref{4.26} and \eqref{4.29}.
\end{proof}

  Beyond the stabilization  result established in Theorem 1.2 for solutions to
   \eqref{1.1}, we further demonstrate that  the  limiting  profile $u_{\infty}$ of the solution component $u$  obtained in \eqref{4.16*} becomes non-homogeneous  when the initial signal concentration $v_0$ is sufficiently small, provided that $u_0$ is not identically constant.

\begin{lemma}\label{lemma4.7}
Let $K > 0$ be such that \eqref{1.6} holds and suppose $u_0\not\equiv$ const.. Then there exists $\delta_2=\delta_2(K,u_0)>0$ such that if $(u_0,v_0,w_0)$ satisfy \eqref{1.2} and \eqref{1.6}, as well as
\begin{align*}
    \int_{\Omega}v_0< \delta_2,
\end{align*}
the corresponding limit function $u_{\infty}\in C(\Omega)$ from Lemma \ref{lemma4.5*} satisfies
$u_{\infty} \not\equiv$ const..
\end{lemma}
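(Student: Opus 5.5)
The plan is to argue by contradiction, using the quantitative closeness estimate \eqref{4.10} from Lemma \ref{lemma4.3}. The key observation is that the set of constant functions is a closed one-dimensional subspace of $(W^{1,\infty}(\Omega))^*$. A nonconstant $u_0$ therefore has positive distance from this subspace.

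Concretely, I will set
\[
d(u_0):=\inf_{c\in\mathbb{R}}\|u_0-c\|_{(W^{1,\infty}(\Omega))^*}.
\]
The first step is to show $d(u_0)>0$. The map $c\mapsto\|u_0-c\|_{(W^{1,\infty}(\Omega))^*}$ is continuous and tends to $\infty$ as $|c|\to\infty$; testing against $\psi\equiv1$ gives $\|u_0-c\|\ge |\int_\Omega u_0-c|\Omega||$. Hence the infimum is attained at some $c_0$. If $d(u_0)=0$, then $\int_\Omega(u_0-c_0)\psi=0$ for all $\psi\in W^{1,\infty}(\Omega)\supset C^\infty(\overline\Omega)$, so $u_0\equiv c_0$ a.e. This contradicts $u_0\not\equiv$ const. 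The quantity $d(u_0)$ depends only on $u_0$.

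Next I choose $\delta_2>0$ so that $C(K)\delta_2^{\sigma}<d(u_0)$, with $C(K)$ and $\sigma$ taken from Lemma \ref{lemma4.1}. Suppose $\int_\Omega v_0<\delta_2$ and, for contradiction, $u_\infty\equiv c$ for some constant $c$. Here $u_\infty$ is the $L^\infty$ limit from Lemma \ref{lemma4.5*}, which coincides with the $(W^{1,\infty})^*$-limit from Lemma \ref{lemma4.3}, because $L^\infty$ convergence implies convergence in $(W^{1,\infty}(\Omega))^*$. Then \eqref{4.10} gives
\[
d(u_0)\le\|u_0-c\|_{(W^{1,\infty}(\Omega))^*}\le C(K)\Big\{\int_\Omega v_0\Big\}^\sigma<C(K)\delta_2^\sigma<d(u_0),
\]
which is a contradiction. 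Therefore $u_\infty\not\equiv$ const.

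The only delicate point I expect is the identification of the two limits, together with the fact that $C(K)$ and $\sigma$ are uniform over all data satisfying \eqref{1.6}. The uniformity is exactly what Lemma \ref{lemma4.1} provides. The identification follows from uniqueness of limits in $(W^{1,\infty}(\Omega))^*$, since both convergences hold along $t\to\infty$ for the same function $u$.
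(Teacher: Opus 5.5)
Your argument is correct, but it takes a different route from the paper in the step that rules out a nonconstant $u_0$ being $(W^{1,\infty}(\Omega))^*$-close to a constant.

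The paper fixes an open set $\Omega_0$ on which $u_0+c_1\le\overline{u_0}$, together with a nonnegative $\psi\in C_0^\infty(\Omega_0)$ with $\int_\Omega\psi=1$. It then applies Lemma \ref{lemma4.4} with $\eta=c_1/(2\|\psi\|_{W^{1,\infty}(\Omega)})$ to obtain $c_2=\int_\Omega u_\infty\psi\le\overline{u_0}-c_1/2$. Because this test function has nonzero mean, the paper also needs the mass bound $\int_\Omega u(\cdot,t)\ge\int_\Omega u_0$ from \eqref{2.8} to get the opposing inequality $c_2\ge\overline{u_0}$.

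You instead use the positive distance $d(u_0)$ from $u_0$ to the closed subspace of constants. This amounts to testing against mean-zero functions, on which constants are invisible: indeed $d(u_0)\ge|\int_\Omega u_0\psi|/\|\psi\|_{W^{1,\infty}(\Omega)}$ for every $\psi$ with $\int_\Omega\psi=0$. As a result, you need no information on the mass of $u$ at all.

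Your route also works verbatim for $u_0\in L^\infty(\Omega)$, as \eqref{1.2} allows. The paper's proof, by contrast, invokes continuity of $u_0$ to produce the open set $\Omega_0$. For merely bounded $u_0$ that step would need a small modification, for instance approximating the normalized indicator of a positive-measure set where $u_0\le\overline{u_0}-c_1$ by smooth functions.

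What the paper's version buys in return is a somewhat more explicit $\delta_2$, expressed through $c_1$ and $\|\psi\|_{W^{1,\infty}(\Omega)}$, whereas your $d(u_0)$ is defined as an infimum.

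One remark on your claimed uniformity of $C(K)$: the constant in \eqref{4.10} actually inherits a dependence on $\int_\Omega u_0$, through $\kappa$ from Lemma \ref{lemma2.2}. This is harmless in both proofs, since $\delta_2$ is allowed to depend on $u_0$.
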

\begin{proof}
     Since $u_{0}$ is continuous and not constant, we can find constant $c_{1}>0$, as well as a open set $\Omega_0\subset\Omega$ such that
     \begin{equation}\label{4.39}
     u_{0}(x)+c_1\leq\overline{u_0}:=\frac1{|\Omega|}\int_\Omega u_0~~\mbox{for all}~~x\in\Omega_0,
      \end{equation}
      and thereafter fix some nonnegative $\psi\in C^{\infty}_{0}(\Omega)$ such that $\operatorname{supp}\psi\subset \Omega_0$ and $\int_\Omega \psi=1$.
      At this position, we claim that
     \begin{align}\label{4.41}
        u_{\infty}\equiv c_2~~\mbox{in}~~\Omega~~\mbox{for some}~~c_2\geq0
    \end{align}
    is absurd. To achieve this, assuming \eqref{4.41} to be valid we see that
    \begin{equation}\label{4.42}
    \int_\Omega u_{\infty}\psi=c_2\int_\Omega\psi.
    \end{equation}
      Now, applying Lemma \ref{lemma4.4} to $\eta:=\frac{c_1}{2\|\psi\|_{W^{1,\infty}(\Omega)}}$, there exists $\delta_2>0$ such that whenever $\int_\Omega v_0\leq\delta_2$,
      \begin{equation}\label{4.30}
      \|u_\infty-u_0\|_{(W^{1,\infty}(\Omega))^*}\leq \frac{c_1}{2\|\psi\|_{W^{1,\infty}(\Omega)}},
      \end{equation}
      and hence, in particular,
      \begin{equation}\label{4.44}
      \int_{\Omega}(u_{\infty}-u_0)\psi\leq\|u_{\infty}-u_0\|_{(W^{1,\infty}(\Omega))^*}
      \cdot\|\psi\|_{W^{1,\infty}(\Omega)}\leq \frac{c_1}{2}.
      \end{equation}
      In addition, according to our choice of $\psi$ and \eqref{4.39}, we have
      \begin{equation*}
      \int_\Omega u_0\psi=\int_{\Omega_0} u_0\psi\leq (\overline{u_0}-c_1)\int_\Omega \psi=\overline{u_0}-c_1,
      \end{equation*}
     which along with \eqref{4.42} and \eqref{4.44} yields
      \begin{equation}\label{4.45}
      c_2=\int_\Omega u_{\infty}\psi\leq\int_\Omega u_0\psi+\frac{c_1}{2}
      \leq(\overline{u_0}-c_1)+\frac{c_1}2
      =\overline{u_0}-\frac{c_1}2.
      \end{equation}
      On the other hand, from \eqref{2.8} and \eqref{2.41} it follows that
      \begin{equation*}
      \int_\Omega u(\cdot,t)\geq\int_\Omega u_0~~\mbox{for all}~~t>0,
      \end{equation*}
which along with \eqref{4.16*} implies that
$      \int_\Omega u_{\infty}\geq\int_\Omega u_0,
      $
      and thus
      $
      c_2\geq\overline{u_0}
      $.
This contradicts \eqref{4.45}, and thereby  $u_{\infty}$
 cannot coincide with any constant.
\end{proof}
\begin{proof}[Proof of Theorem \ref{Th1.3}] The non-triviality of the limiting profile $u_\infty$ in Theorem 1.3,
 along with the properties stated in \eqref{1.10},  follows directly from Lemma \ref{lemma4.5}--Lemma \ref{lemma4.7}, which completes the proof of this theorem.
\end{proof}

\vspace{.3cm}

\noindent{\bf Conflict of interest}:
No potential conflict of interest is reported by the authors.

\noindent{\bf Ethics approval}:
Ethics approval is not required for this research.


\noindent{\bf Data availability statement}:
All data that support the findings of this study are included within the article. 

\noindent{\bf Financial support}: This work was supported by the National Natural Science Foundation of China under Grant(12071030, 12271186, 12171498).

\bibliographystyle{elsarticle-num}

\begin{thebibliography}{99}
\bibitem{Blanchet}
A. Blanchet, J.A. Carrillo and Ph. Lauren\c{c}ot, Critical mass for a Patlak-Keller-Segel model with
degenerate diffusion in higher dimensions, Calc. Var. Partial Differ. Equ., 35(2009), 133--68.


\bibitem{Daicvpde(2023)}
F. Dai, How far do indirect signal production mechanisms regularize
the three-dimensional Keller-Segel-Stokes system?, Calc. Var. Part. Differ. Equ., 62(2023), 119.

\bibitem{XiangMao}
X. Deji, A. Huang and Y. Wang, Stabilization of arbitrary structures in a three-dimensional doubly degenerate nutrient taxis system, Preprint.

\bibitem{Fu}
X. Fu, L-H. Tang, C. Liu, J-D. Huang, T. Hwa and P. Lenz,  Stripe formation in bacterial systems
with density-suppressed motility,  Phys. Rev. Lett., 108(2012), 198102.



\bibitem{Fuest(2019)}
M. Fuest, Analysis of a chemotaxis model with indirect signal absorption, J. Differ. Equ., 267(2019), 4778--4806.


\bibitem{Fujie(2017)}
K. Fujie and T. Senba, Application of an Adams type inequality to a two-chemical substances chemotaxis system, J. Differ.  Equ.,
 263(1)(2017), 88--148.



\bibitem{Fujikawa1992}
H. Fujikawa, Periodic growth of bacillus subtilis colonies on agar plates, Phys. A, 189(1992), 15--21.

\bibitem{Fujikawa1989}
H. Fujikawa and M. Matsushita, Fractal growth of bacillus subtilis on agar plates, J. Phys. Soc. Japan, 47(1989), 2764--2767.

\bibitem{Herrero(1997)}
M.A. Herrero and J. L. Vel\'{a}zquez, A blow-up mechanism for a chemotaxis model, Ann. Sc. Norm. Super. Pisa, Cl. Sci., 24(1997), 633--683.

\bibitem{HarnackJDE}
J. H\'{u}ska, Harnack inequality and exponential separation for oblique derivative problems on Lipschitz domains. J. Differ. Equ., 226(2006), 541--557.

\bibitem{Hillen(2009)}
T. Hillen and K. J. Painter, A user's guide to PDE models for chemotaxis, J. Math. Biol., 58(2009), 183--217.

\bibitem{Hillen(2013)}
T. Hillen, K.J. Painter and M. Winkler, Convergence of a cancer invasion model to a logistic chemotaxis model, Math. Models Methods Appl. Sci., 23(2013), 165--198.


\bibitem{Kawasaki1997}
K. Kawasaki, A. Mochizuki, M. Matsushita, T. Umeda and N. Shigesada, Modeling spatio-temporal patterns generated by bacillus subtilis, J. Theor. Biol. 188(1997), 177--185.

\bibitem{Schauder}
O.A. Ladyzenskaja, V.A. Solonnikov and N.N. Ural\'ceva, Linear and Quasi-Linear Equations of Parabolic Type, Amer. Math. Soc. Transl., 23(1968), Providence, RI.

\bibitem{Lankeit}
J. Lankeit, Locally bounded global solutions to a chemotaxis consumption model with singular sensitivity
and nonlinear diffusion, J. Differ. Equ., 262(2017), 4052--4084.

\bibitem{Laurenot}
P. Lauren\c{c}ot, Large time convergence for a chemotaxis model with degenerate local sensing and consumption,  Bull. Korean Math. Soc.,  61(2024), 479--488.


\bibitem{Leyva2013}
J.F. Leyva, C. M\'{a}laga and R.G. Plaza, The effects of nutrient chemotaxis on bacterial aggregation patterns with nonlinear degenerate cross diffusion, Phys. A, 392(2013), 5644--5662.


\bibitem{LiJDE}
G. Li, Large-data global existence in a higher-dimensional doubly degenerate nutrient system, J. Differ. Equ., 329(2022), 318--347.


\bibitem{Li Winkler}
G. Li and M. Winkler, Nonnegative solutions to a doubly degenerate nutrient taxis system, Commun. Pure Appl. Anal., 21(2022), 687--704.

\bibitem{Li Winkler(2022R)}
G. Li and M. Winkler, Refined regularity analysis for a Keller-Segel-consumption system involving signal-dependent motilities, Appl. Anal., 103(2024), 45--64.



\bibitem{Liu}
C. Liu et al., Sequential establishment of stripe patterns in an expanding cell population, Science, 334(2011), 238.


\bibitem{Liu Li Huang(2020)}
Y. Liu, Z. Li and J. Huang, Global boundedness and large time behavior of a chemotaxis system with indirect signal absorption, J. Differ. Equ., 269(2020), 6365--6399.


\bibitem{Fujikawa1990}
M. Matsushita and H. Fujikawa, Diffusion-limited growth in bacterial colony formation, Physica A: Statistical Mechanics and its Appl., 168(1990), 498--506.

\bibitem{Ohgiwari}
M. Ohgiwari, M. Matsushita and  T. Matsuyama, Morphological changes in growth phenomena of bacterial
colony patterns, J. Phys. Soc. Jpn., 50(1992), 705--711.

\bibitem{XP}
X. Pan, Superlinear degradation in a doubly degenerate nutrient taxis system, Nonlinear Anal. Real World Appl., 77(2024), 104040.

\bibitem{Plaza2019}
R. G. Plaza, Derivation of a bacterial nutrient-taxis system with doubly degenerate crossdiffusion as the parabolic limit of a velocity-jump process, J. Math. Biol, 78(2019), 1681--1711.

\bibitem{PS2002}
P. Pol\'{a}$\breve{c}$ik  and F. Simondon, Nonconvergent bounded solutions of semilinear heat equations on arbitrary domains, J. Differ. Equ., 186(2002), 586--610.

\bibitem{PY2003}
P.~Pol\'{a}$\breve{c}$ik and E.~Yanagida, On bounded and unbounded global solutions of a supercritical semilinear heat equation, Math. Ann., 327(2003), 745--771.


\bibitem{Holder} M.M. Porzio and V. Vespri, H\"{o}lder estimates for local solutions of some doubly nonlinear degenerate parabolic equations,  J. Differ. Equ., 103(1)(1993), 146--178.

\bibitem{Strohm(2013)}
S. Strohm, R. C. Tyson and J. A. Powell, Pattern formation in a model for mountain
pine beetle dispersal: linking model predictions to data, Bull. Math. Biol.,  75(2013), 1778--1797.

\bibitem{Surulescu(2021)}
C.~Surulescu  and M.~Winkler,
Does indirectness of signal production reduce the explosion-supporting potential in chemotaxis-haptotaxis systems? Global classical
solvability in a class of models for cancer invasion (and more), Euro. J. Appl. Math., 32(4)(2021), 618--651.


\bibitem{TaoW} Y. Tao and  M. Winkler, Boundedness in a quasilinear parabolic-parabolic Keller-Segel system with subcritical sensitivity, J. Differ. Equ., 252(2012), 692--715.

\bibitem{TaoWinkler(2017)}
Y. Tao and M. Winkler, Critical mass for infinite-time aggregation in a chemotaxis model with indirect signal production, J. Eur. Math. Soc., 19(2017), 3641--3678.



\bibitem{TelloWrzosek(2017)}
J.I. Tello and  D. Wrzosek,  Predator--prey model with diffusion and indirect prey-taxis, Math. Models Methods Appl. Sci., 26(2016), 2129--2162.




\bibitem{Winkler(2024)}
M. Winkler, A quantitative strong parabolic maximum principle and application to a taxis-type migration consumption model involving signal-dependent degenerate diffusion,
Ann. Inst. H. Poincar\'e Anal. NonLin\'eaire, 41(2024), 95--127.

\bibitem{Winkler(2010)} M. Winkler, Aggregation vs. global diffusive behavior in the higher-dimensional Keller-Segel model, J. Differ. Equ., 248(2010), 2889--2905.





\bibitem{WDCDSB} M. Winkler, Approaching logarithmic singularities in quasilinear chemotaxis-consumption systems with signal-dependent sensitivities, Discrete Contin. Dyn. Syst. Ser. B., 27(2022), 6565--6587.

\bibitem{Winkler3} M. Winkler, Does spatial homogeneity ultimately prevail in nutrient taxis systems? A paradigm for structure support by rapid   diffusion decay in an autonomous parabolic flow, Trans. Am. Math. Soc., 374(2021), 219--268.

\bibitem{WinklerMAS2025}
    M. Winkler, Effect of degeneracies in taxis-driven evolution,  Math. Models Methods Appl. Sci., 35(2025), 283--343.

\bibitem{Winkler5} M. Winkler, Elliptic Harnack inequalities in linear parabolic equations and application to the asymptotics in a doubly degenerate nutrient taxis system, Preprint.

\bibitem{WinklerEven}
M. Winkler, Eventual regularity in a two-dimensional doubly degenerate nutrient taxis system, Preprint.

\bibitem{Winkler(2013)}
M. Winkler, Finite-time blow-up in the higher-dimensional parabolic-parabolic Keller-Segel system, J. Math. Pures Appl., 100(2013), 748--767.

\bibitem{Winkler(2023Non)}
M. Winkler, Stabilization despite pervasive strong cross-degeneracies in a nonlinear
diffusion model for migration-consumption interaction, Nonlinearity, 36(2023), 4438--4469.

\bibitem{WCVPDE} M. Winkler, Stabilization of arbitrary structures in a doubly degenerate reaction-diffusion system modeling bacterial motion
    on a nutrient-poor agar, Calc. Var. Partial Differ. Equ., 61(2022), 108.

\bibitem{WNARWA} M. Winkler, Small-signal solutions of a two-dimensional doubly degenerate taxis system modeling bacterial motion in
    nutrient-poor environments, Nonlinear Anal. Real World Appl., 63(2022), 1468--1218.

\bibitem{WJDDE}
M.~Winkler, Large time behavior and stability of equilibria of degenerate parabolic equations, J. Dyn. Differ. Equ. 17(2005), 331--351.


\bibitem{WJDE} M. Winkler, $L^{\infty}$ bounds in a two-dimensional doubly degenerate nutrient taxis system with general cross-diffusive flux, J. Differ. Equ., 400(2024), 423--456.



\bibitem{Wu} D. Wu, Refined existence theorems for doubly degenerate chemotaxis-consumption systems with large initial data, Nonlinear Differ. Equ. Appl., 31(2024), 104.



\bibitem{Xing(2021)}
J. Xing, P. Zheng, Y. Xiang and H. Wang, On a fully parabolic singular chemotaxis--(growth) system with indirect signal production or consumption, Z. Angew. Math. Phys.,  72(2021), 105.



\bibitem{LiWinkler2}
G. Li and M. Winkler, Continuous solutions for a two-dimensional cross-diffusion problem involving doubly degenerate diffusion and logistic proliferation,  Anal. Appl., 23(2025), 485--510.

\bibitem{ZhangLi}
Z. Zhang and Y. Li, Boundedness in a two-dimensional doubly degenerate nutrient taxis system, Preprint.





\end{thebibliography}

\end{document}